\newtheorem{theorem}{Theorem}[section] 
\newtheorem{lemma}[theorem]{Lemma}     
\newtheorem{corollary}[theorem]{Corollary}
\newtheorem{proposition}[theorem]{Proposition}
\newcommand{\p}{\mathfrak{p}}
\newcommand{\q}{\mathfrak{q}}
\newcommand{\rr}{\mathfrak{r}}
\newcommand{\Z}{\mathbb{Z}}
\newcommand{\C}{\mathbb{C}}
\newcommand{\Q}{\mathbb{Q}}
\newcommand{\A}{\mathbb{A}}
\newcommand{\ff}{\mathfrak{f}}
\title[Non-abelian congruences between special values of $L$-functions of elliptic curves; the CM case]
 {Non-abelian congruences between special values of $L$-functions of elliptic curves; the CM case} 
\author{Thanasis Bouganis}
\address{Universit\"{a}t Heidelberg\\ Mathematisches Institut\\
Im Neuenheimer Feld 288\\ 69120 Heidelberg, Germany.}
\email{bouganis@mathi.uni-heidelberg.de}
\begin{document}
\maketitle

\begin{abstract}
In this work we prove congruences between special values of elliptic
curves with CM that seem to play a central role in the analytic side
of the non-commutative Iwasawa theory. These congruences are the
analogue for elliptic curves with CM of those proved by Kato, Ritter
and Weiss for the Tate motive. The proof is based on the fact that
the critical values of elliptic curves with CM, or what amounts to
the same, the critical values of Gr\"{o}ssencharacters, can be
expressed as values of Hilbert-Eisenstein series at CM points. We
believe that our strategy can be generalized to provide congruences
for a large class of $L$-values.
\newline

\end{abstract}

\section{Introduction}

In \cite{CFKSV,FK} a vast generalization of the Main Conjecture of
the classical (abelian) Iwasawa theory to a non-abelian setting is
proposed. As in the classical theory, the non-abelian Main
Conjecture predicts a deep relation between an analytic object (a
non-abelian $p$-adic $L$-function) and an algebraic object (a Selmer
group or complex over a non-abelian $p$-adic Lie extension). However
the evidences for this non-abelian Main Conjecture are still very
modest. One of the central difficulties of the theory seems to be
the construction of non-abelian $p$-adic $L$-functions. Actually the
only known results in this direction are mainly restricted to the
Tate motive over particular $p$-adic Lie extensions as for example
in \cite{RW,Kato,Kakde,Hara}. We should also mention here that for
elliptic curves there are some evidences for the existence of such
non-abelian $p$-adic $L$-functions offered in
\cite{Bouganis2,DelbWard} and also some computational evidences
offered in \cite{Dokchitsers,DelbWard2}.

The main aim of the present work is to address the issue of the
existence of the non-abelian $p$-adic $L$-function for an elliptic
curve with complex multiplication (but see also the remark later in
the introduction) with respect specific $p$-adic Lie extension as
for example, the so-called false Tate curve extension or Heisenberg
type Lie extensions. Actually we prove congruences, under some
assumptions, that are the analogue for elliptic curves with CM of
those proved by Ritter and Weiss in \cite{RW} for the Tate motive.
We remark that such congruences can be used to prove the existence
of the non-abelian $p$-adic $L$-function as done for example in
\cite{Kato} or in \cite{Kakde} for the Tate motive. We start by
making our setting concrete.

Let $E$ be an elliptic curve defined over $\Q$ with CM by the ring
of integers $\mathfrak{R}_0$ of a quadratic imaginary field $K_0$.
We fix an isomorphism $\mathfrak{R}_0 \cong End(E)$ and we write
$\Sigma_0$ for the implicit $CM$ type. Let us write $\psi_{K_0}$ for
the attached Gr\"{o}ssencharacter to $E$, that is $\psi_{K_0}$ is a
Hecke character of $K$ of (ideal) type $(1,0)$ with respect to the
CM type $\Sigma_0$ and satisfy $L(E,s)=L(\psi_{K_0},s)$. We fix an
odd prime $p$ where the elliptic curve has good ordinary reduction.
We fix an embedding $\bar{\Q}\hookrightarrow \bar{\Q}_p$ and using
the selected CM type we fix an embedding $K \hookrightarrow
\bar{\Q}$. The ordinary assumption implies that $p$ splits in $K_0$,
say to $\p$ and $\bar{\p}$ where we write $\p$ for the prime ideal
that corresponds to the $p$-adic embedding $K \hookrightarrow
\bar{\Q}\hookrightarrow \bar{\Q}_p$. We write $N_E$ for the
conductor of $E$ and $\mathfrak{f}$ for the conductor of
$\psi_{K_0}$.

We consider a finite totally real extension $F$ of $\Q$ which we
assume unramified at the primes of $\Q$ that ramify in $K_0$ and at
$p$. We write $\mathfrak{r}$ for its ring of integers and we fix an
integral ideal $\mathfrak{n}$ of $\mathfrak{r}$ that is relative
prime to $p$ and to $N_E$. Let now $F'$ be a totally real Galois
extension of $F$, cyclic of order $p$ that is ramified only at
primes of $F$ lying above $p$ or at primes of $F$ that divide
$\mathfrak{n}$. We make the additional assumptions that the non-$p$
part of the conductor of $F'/F$ divides $\mathfrak{n}$, that is $F'$
is a subfield of $F(p^{\infty}\mathfrak{n})$, the ray class field of
conductor $p^\infty\mathfrak{n}$ and that the primes that ramify in
$F'/F$ are split in $K$. That is if we write $\theta_{F'/F}$ for the
relative different of $F'/F$ then
$\theta_{F'/F}=\mathfrak{P}\bar{\mathfrak{P}}$ in $K'$. We write
$\mathfrak{r'}$ for the ring of integers in $F'$. We write $K$ for
the CM field $F K_0$ and $K'$ for the CM field $F' K_0=F'K$ and
$\mathfrak{R}$ and $\mathfrak{R'}$ for their ring of integers
respectively. We also write $\Gamma$ for the Galois group $Gal(F'/F)
\cong Gal(K'/K)$. Note that in both $F$ and $F'$ all primes above
$p$ split in $K$ and $K'$ respectively. Finally we write $\tau$ for
the nontrivial element (complex conjugation) of $Gal(K/F) \cong
Gal(K'/F')$ and we set $g:=[F:\Q]$.

We now consider the base changed elliptic curves $E/F$ over $F$ and
$E/F'$ over $F'$. We note that the above setting gives the following
equalities between the $L$ functions,
\[
L(E/F,s) = L(\psi_K,s),\,\,\,L(E/F',s)=L(\psi_{K'},s)
\]
where $\psi_K := \psi_{K_0} \circ N_{K/K_0}$ and $\psi_{K'}:= \psi_K
\circ N_{K'/K} = \psi_{K_0} \circ N_{K'/K_0}$, that is the
base-changed characters of $\psi_{K_0}$ to $K$ and $K'$.

We write $G_F$ for the Galois group
$Gal(F(p^{\infty}\mathfrak{n})/F)$ where $F(p^{\infty}\mathfrak{n})$
denotes the ray class field modulo $p^\infty\mathfrak{n}$ over $F$,
and also $G_{F'}:=Gal(F'(p^{\infty}\mathfrak{n})/F')$ for the
analogue for $F'$. Note that the above setting introduce a transfer
map $ver:G_F \rightarrow G_{F'}$. Moreover we have an action by
conjugation of $\Gamma=Gal(F'/F)$ on $G_{F'}$. We consider the
measures $\mu_{E/F}$ of $G_F$ and $\mu_{E/F'}$ of $G_{F'}$ that
interpolate the critical values of the elliptic curve $E/F$ and
$E/F'$ respectively twisted by finite order characters of conductor
dividing $p^\infty\mathfrak{n}$. The precise interpolation
properties is a delicate issue in our setting that we will discuss
in the next section. However we can state now the main theorem of
our work. We write $\mathfrak{j}$ for the smallest ideal of
$\mathfrak{r}$ which contains $\mathfrak{n}\mathfrak{f}\mathfrak{R}
\cap F$ and its prime factors inert or ramify in $K$. If we write
$\mathfrak{J}:=\mathfrak{j}\mathfrak{R}$ then we denote by
$Cl_{K}(\mathfrak{J})$ the ray class group of the ray class field
$K(\mathfrak{J})$. We define $Cl^{-}_{K}(\mathfrak{J})$ as the
quotient of $Cl_K({\mathfrak{J}})$ by the natural image of
$(\mathfrak{r}/\mathfrak{j})^\times$. Similarly we make the
analogous definitions for $K'$.

\begin{theorem}\label{maintheorem}We make the assumptions
\begin{enumerate}
\item  The natural map $Cl^{-}_K(\mathfrak{J}) \rightarrow Cl^{-}_{K'}(\mathfrak{J})^{\Gamma}$ is an isomorphism,
\item The natural map $Cl_{F}(1) \rightarrow Cl_{F'}(1)$ is an injection,
\item The relative different $\theta_{F'/F}$ of $F'$ over $F$ is trivial in $Cl^{+}_{F'}$, the strict ideal class group of
$F'$.  That is, there is $\xi \in F'$, totally positive so that
$\theta_{F'/F} = (\xi)$.
\end{enumerate}
Then,
\begin{equation}
\int_{G_F}\epsilon \circ ver \,\,\,\, d\mu_{E/F} \equiv
\int_{G_{F'}}\epsilon\,\,\,\,\, d\mu_{E/F'} \mod{p\Z_p}
\end{equation}
for all $\epsilon$ locally constant $\Z_p$-valued functions on
$G_{F'}$ such that $\epsilon^{\gamma}=\epsilon$ for all $\gamma \in
\Gamma$ where
$\epsilon^\gamma(g):=\epsilon(\tilde{\gamma}g\tilde{\gamma}^{-1})$
for all $g \in G_{F'}$ and for some lift $\tilde{\gamma} \in
Gal(F'(p^\infty\mathfrak{n})/F))$ of $\gamma$. More generally if we
relax the assumption (1) and assume only that
$\imath:Cl^{-}_K(\mathfrak{J}) \hookrightarrow
Cl^{-}_{K'}(\mathfrak{J})^{\Gamma}$ is injective then equation (1)
reads
\begin{equation}
\int_{G_F}\epsilon \circ ver \,\,\,\, d\mu_{E/F} \equiv
\int_{G_{F'}}\epsilon\,\,\,\,\, d\mu_{E/F'} + \Delta(\epsilon)
\mod{p\Z_p}
\end{equation}
where $\Delta(\epsilon)$ is an ``error term'' that depends on the
cokernel of the map $\imath$.
\end{theorem}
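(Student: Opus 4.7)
The guiding observation, already flagged in the abstract, is that by Damerell--Shimura theory the critical values of $L(\psi_K,s)$ and $L(\psi_{K'},s)$, twisted by finite order characters of conductor dividing $p^{\infty}\mathfrak{n}$, are realized as values of Hilbert--Eisenstein series over $F$ and $F'$ at CM points coming from the CM fields $K$ and $K'$. Consequently the measures $\mu_{E/F}$ and $\mu_{E/F'}$ are pushforwards of Hilbert--Eisenstein measures under a CM-specialization map, and the congruence (1) can be reduced to a $q$-expansion congruence between Hilbert modular forms over $F$ and $F'$. My plan follows the three-step template used by Ritter--Weiss \cite{RW} for the Tate motive, adapted to the CM setting.

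Step 1 (analytic $\to$ modular): expanding $\epsilon$ along its finite-order character components, the two integrals become finite linear combinations of critical values $L(\psi_K\chi,0)$ and $L(\psi_{K'}\chi',0)$. Damerell's formula rewrites each such value, up to a period and explicit local factors, as a finite sum of values of an explicit Hilbert--Eisenstein series at CM points indexed by ideal classes in $K$, resp.\ $K'$; after factoring out the image of $(\mathfrak{r}/\mathfrak{j})^{\times}$, which acts trivially on the value because $\psi_K|_{\mathfrak{r}^\times}$ is real, these sums are indexed precisely by $Cl^{-}_K(\mathfrak{J})$ and $Cl^{-}_{K'}(\mathfrak{J})$. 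The $\Gamma$-invariance of $\epsilon$, together with the natural $\Gamma$-action on CM points, forces the $K'$-side to live on $Cl^{-}_{K'}(\mathfrak{J})^{\Gamma}$.

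Step 2 (modular congruence): I then assemble these Eisenstein series into two global Hilbert modular forms $\mathcal{E}^{(F)}_{\epsilon}$ over $F$ and $\mathcal{E}^{(F')}_{\epsilon}$ over $F'$, and prove a $q$-expansion congruence
\[
\mathrm{Tr}_{F'/F}\bigl(\mathcal{E}^{(F')}_{\epsilon}\bigr) \equiv \mathcal{E}^{(F)}_{\epsilon} \pmod{p},
\]
where the trace is computed at the level of Fourier coefficients, using the totally positive generator $\xi$ of $\theta_{F'/F}$ supplied by assumption (3) to identify coefficients indexed by $\mathfrak{r}'$ with those indexed by $\mathfrak{r}$. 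The underlying congruence is essentially $N(\mathfrak{a})^{p}\equiv N(\mathfrak{a})\pmod p$ applied to ideals of $\mathfrak{r}'$, combined with an analysis of $\Gamma$-orbits, precisely as in the Tate motive case; the novelty is notational. Step 3 (CM specialization and error term): via the $q$-expansion principle and Shimura's reciprocity law, the congruence of Step 2 is transported to a congruence between the CM-values that make up the two integrals. Assumption (1) is what forces the bijection between the CM points parametrized by $Cl^{-}_K(\mathfrak{J})$ on the $F$-side and by $Cl^{-}_{K'}(\mathfrak{J})^{\Gamma}$ on the $F'$-side, so that the two sums match term by term; assumption (2) guarantees that the trivial character on $F$ lifts uniquely in the expansion. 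When (1) is weakened to injectivity of $\imath$, the image of $Cl^{-}_K(\mathfrak{J})$ is only a subgroup of $Cl^{-}_{K'}(\mathfrak{J})^{\Gamma}$ of finite index, and the missing coclasses contribute exactly $\Delta(\epsilon)$.

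I expect the main obstacle to sit entirely in Step 3: the bookkeeping of the period, relative different, and local Euler factors appearing in Damerell's formula must cancel or be congruent mod $p$ under the comparison between $F$ and $F'$. This is where assumption (3) enters decisively, since $\xi$ is precisely what absorbs the differential periods when moving Fourier coefficients from $\mathfrak{r}'$ to $\mathfrak{r}$, and it is also where one must check compatibility between the ordinary $p$-adic unit normalization of the CM period and the Frobenius on the associated Serre--Tate deformation space. The congruence mod $p$ at the level of $q$-expansions in Step 2 is, by contrast, essentially formal; the work is in verifying that the CM specialization is compatible with it.
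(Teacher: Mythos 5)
Your overall template (expand $\epsilon$ into characters, realize the $L$-values as Eisenstein values at CM points indexed by $Cl^{-}_K(\mathfrak{J})$ and $Cl^{-}_{K'}(\mathfrak{J})$, prove a $q$-expansion congruence between an $F'$-Eisenstein series pulled down to $F$ and an $F$-Eisenstein series, then specialize at CM points and do the $\Gamma$-orbit/class-group bookkeeping that produces $\Delta(\epsilon)$) is indeed the strategy of the paper. But there are two genuine gaps. First, and most seriously, you treat the comparison of periods, relative differents and local factors as bookkeeping to be checked at the end ("the main obstacle... must cancel or be congruent mod $p$"), whereas in the ramified case it provably does \emph{not} cancel for the standard Katz--Hida--Tilouine normalization on the $F'$-side: the discrepancy involves $N_{K'/K}(\delta')$ versus $\delta^{p}$ and is not a unit congruent to $1$ (this is exactly the computation in the paper's appendix). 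The fix is not a cancellation but a new construction: $\mu_{E/F'}$ must be \emph{defined} through a twisted measure, obtained by evaluating the Eisenstein series at the twisted CM triples $X(\mathfrak{U}_j\otimes\theta_{F'/F}^{-1})$ with the polarization induced by the same $\delta\in K$ (not a $\delta'\in K'$), and one must recompute its interpolation formula (modified local factors and extra terms $\chi(\mathfrak{p}_j)^{-e_j}$ at unramified-in-$\chi$ primes). The reason assumption (3) enters is precisely that the pullback along the diagonal of the CM point attached to $\mathfrak{U}$ is the CM variety attached to $\mathfrak{U}\theta_{F'/F}^{-1}\mathfrak{R}'$, so the totally positive generator $\xi$ is needed to identify this with an admissibly polarized twisted triple; without building this into the definition of the measure your Step 3 comparison is comparing quantities whose interpolation formulas genuinely differ.

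Second, your Step 2 congruence is not quite the right one, and the missing correction drives the rest of the argument. The diagonal restriction of a weight-$k$ Hilbert--Eisenstein series over $F'$ must be compared with $Frob_p$ applied to the weight-$pk$ series over $F$ (its restricted $q$-expansion is supported, mod $p$, on exponents of the form $p\xi'$), not with a series of the same weight; consequently one is led to the measure attached to $\psi^{p}$ rather than $\psi$, and one then needs: (i) the CM reciprocity law identifying $Frob_p$ of an Eisenstein value at a CM point with the arithmetic Frobenius $\Phi$ applied to that value, (ii) the congruences $\psi^{p}\equiv\psi$ and $u^{p}\equiv u \bmod p$ together with $\Omega_p(E)^{\Phi}=u\,\Omega_p(E)$ to remove $\Phi$ and reconcile the period normalizations $\Omega_p(E)^{g}$ versus $\Omega_p(E)^{pg}$, and (iii) an integrality statement (valid only for cyclotomic $\epsilon$, via Blasius/Hida rationality and the epsilon-factor discrepancy analysis) asserting that the normalized integrals lie in $\Z_p$, without which step (ii) has no meaning modulo $p$. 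None of this appears in your outline beyond a passing mention of Serre--Tate compatibility. Finally, a smaller point: assumption (2) is not about "lifting the trivial character"; it is used inside the $q$-expansion congruence itself, to guarantee that a $\Gamma$-fixed pair $(a,b)$ occurring in a Fourier coefficient comes, up to units, from elements of $F$, so that $\phi'(a,b)N_{F'}(a)^{k-1}$ can be rewritten as $\phi(a,b)N_F(a)^{pk-1}$ mod $p$.
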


{\textbf{Remarks}:}
\begin{enumerate}
\item It can be shown, see for example \cite{RW}, that the above congruences imply the
following congruences between measures. If we write $f_{E/F}$ for
the element in the Iwasawa algebra $\Z_p[[G_F]]$ that corresponds to
the measure $\mu_{E/F}$ and similarly $f_{E/F'}$ for that in
$\Z_p[[G_F']]$ then we obtain the congruences
\[
ver(f_{E/F}) \equiv f_{E/F'} \mod\,\,T
\]
where $T$ is the trace ideal in $\Z_p[[G_F']]^{\Gamma}$ generated by
elements $\Sigma_{i=0}^{p-1} a^{\gamma^{i}}$ for $a \in
\Z_p[[G_F']]$. Note that $f_{E/F'}$ is in $\Z_p[[G_F']]^{\Gamma}$ as
it comes from base change from $F$. It is exactly this implication
that motivates our work. The aim is to use this kind of congruences
to establish the existence of non-commutative $p$-adic $L$-functions
for our elliptic curve with respect to specific $p$-adic Lie groups,
as for example Heisenberg type Lie groups, very much in the same
spirit done by Kato for the Tate motive $\mathbb{Z}_p(1)$ in
\cite{Kato} and by Kakde for false Tate curve extensions also for
the Tate motive in \cite{Kakde}.

\item Our assumption that the elliptic curve is defined over
$\mathbb{Q}$ is made mainly for simplicity reasons. Our
considerations could be applied in a more general setting. One can
consider as starting ``object'' a Hilbert-modular form over $F$ with
CM by $K$. The delicate issue however is the understanding of the
various motivic periods that are associated to it. However the
``philosophy'' of our proof applies also in this setting.

\item We believe that the term $\Delta(\epsilon)$ always vanishes
but we cannot prove it yet.

\item The assumption that $\epsilon$ is $\mathbb{Z}_p$-valued can be
relaxed and consider any integrally-valued locally constant
function. Then simply one obtains the congruences
\[
\int_{G_F}\epsilon \circ ver \,\,\,\, d\mu_{E/F} \equiv
\int_{G_{F'}}\epsilon\,\,\,\,\, d\mu_{E/F'} \mod{p\Z_p[\epsilon]}
\]
where $\Z_p[\epsilon]$ is the ring of integers of the smallest
extension of $\Q_p$ that contains the values of $\epsilon$.

\end{enumerate}

{\textbf{On the strategy of the proof:}} Let us finish the
introduction by explaining briefly the main idea of the proof. As we
will explain shortly we are going to construct the measure
$\mu_{E/F}$ and $\mu_{E/F'}$ by using the so-called Katz measure for
Hecke characters of CM fields. The reason for this should be
intuitively clear from the equation of $L$ functions above. These
measures are constructed by using the fact (going back to Damerell's
theorem) that the special values of the $L$ of Gr\"{o}ssencharacters
can be expressed as values of Hilbert-Eisenstein series on CM
points. The modular meaning of these CM points is that they
correspond to Hilbert-Blumenthal abelian varieties (HBAV) with CM of
the same type as the character under consideration. In our relative
setting we have that the Gr\"{o}ssencharacter $\psi_{K'}$ is the
base change of $\psi_K$, in particular as we will explain in the
next section if we write $(K,\Sigma)$ for the CM type of $\psi_{K}$
then the CM type of $\psi_{K'}$ is $(K',\Sigma')$ where $\Sigma'$ is
the lift of $\Sigma$ to $K'$. But now the key observation is that
the HBAV with CM of type $(K',\Sigma')$ are isogenous to
$[K':K]$-copies of HBAV with CM $(K,\Sigma)$. In particular this
says that the CM points that we need to evaluate our Eisenstein
series over $F'$ are in some sense coming from $F$ through the
natural diagonal embedding $\Delta:\mathbb{H}_F \hookrightarrow
\mathbb{H}_{F'}$ of the Hilbert upper half planes. Note here the
importance of $\Sigma'$ being lifted from $\Sigma$. Hence we can
pull-back the Hilbert-Eisenstein that is used over $F'$ to obtain a
Hilbert-Eisenstein series over $F$, so that its values on CM points
of $\mathbb{H}_F$ are the same with those of the one over $F'$
evaluated on the image of the CM points with respect to $\Delta$. It
is mainly this idea that we will use to prove the above congruences.
We note here that a similar strategy was used by Kato \cite{Kato}
and Ritter and Weiss \cite{RW} for the cyclotomic character but in
their works the $L$ values appeared as the constant term of
Hilbert-Eisenstein series (or as ``values'' at the cusp at
infinity). We believe that this strategy is more general. We have
applied similar considerations in \cite{B} for other $L$-values that
can be understood either as values at CM points or at infinity of
Eisenstein series of the unitary group. Actually what we are doing
here could be rephrased in the unitary group setting, but we defer
this discussion for our forthcoming work \cite{B}.

\section{The measures attached to the elliptic curves $E/F$ and
$E/F'$}

The statement of our main theorem involves measures on $G_F$ (resp
$G_{F'}$) with the property that integrating these measures against
finite characters of $G_F$ (resp $G_{F'}$) we obtain critical values
of $E/F$ (resp $E/F')$) twisted with these characters up to some
modification. Now we proceed in explaining the construction of these
measures and their interpolation properties. We point right away
that there are various construction of these measures; the modular
symbol construction which we will not discuss at all, the
construction of Katz, Hida and Tilouine on which we will use in the
present work and finally in our specific setting the construction of
Colmez and Schneps which we also discuss shortly below. In order to
explain the definition of the above-mentioned measures we need to
introduce some more notation.

{\textbf{Archimedean and $p$-adic periods:}} Since the elliptic
curve $E$ is defined of $\Q$, we have that the class number of $K_0$
is one. In particular we can fix a well-defined complex period for
$E$ as follows. We write $\Lambda$ for the lattice of $E$, that is
$E(\C) \cong \C/\Lambda$. Then we define $\Omega_{\infty}(E) \in
\C^{\times}$ uniquely up to elements in $\mathfrak{R}^{\times}_0$ as
$\Lambda = \Omega_{\infty}(E) \sigma_0(\mathfrak{R}_0)$, where
$\sigma_0 :K \hookrightarrow \C$ the selected embedding. Moreover we
define a $p$-adic period $\Omega_{p}(E) \in
\mathcal{J}_{\infty}^{\times}$, where $\mathcal{J}_{\infty}$ denotes
the ring of integers of the $p$-adic completion of the maximal
abelian unramified extension of $\Q_p$. If we write $\Phi$ for the
extension of Frobenious that operates on it, then it is well-known
that this period is uniquely determined up to elements in
$\Z_p^{\times}$ by the property
\[
\frac{\Omega_{p}(E)^{\Phi}}{\Omega_{p}(E)} = u \in \Z_{p}^{\times}
\]
where $u$ is the $p$-adic number determined by the equation (as $p$
is good ordinary for $E$)
\[
1-a_p(E)X+pX^{2}=(1-uX)(1-wX)
\]

{\textbf{CM-types:}} We fix some CM-types for the CM fields
$K_0,K,K'$. We have fixed already an embedding of $K_0
\hookrightarrow \C$, say $\sigma_0$ and defined the CM type of $K_0$
by $\Sigma_0 = \{\sigma_0\}$. We normalized things so that the
character $\psi_{K_0}$ is of infinite type $1\,\,\Sigma_0$. Now we
fix a CM type $\Sigma$ of $K$ by taking the lift of $\Sigma_0$ to
$K$. That is, we pick embeddings that restrict to $\sigma_0$ in
$K_0$. We also define $\Sigma'$ to be the lift of $\Sigma$ in $K'$.
We note two things for these CM-types. First the characters $\psi_K$
and $\psi_K'$ are of type $1 \Sigma$ and $1 \Sigma'$. Second the
types just picked are also $p$-ordinary in the terminology of Katz,
that simply amounts of picking the primes of $K$ and $K'$ that are
above $\p$ and not $\bar{\p}$. We denote these sets of primes as
$\Sigma_p$ and $\Sigma'_p$ respectively. Of course we set also
$\Sigma_{0,p}=\{\p\}$. Finally we note that all abelian varieties of
dimension $[F:\Q]$ with CM by $K$ (resp dimension $[F':\Q]$ and CM
by $K'$) and type $\Sigma$ (resp. $\Sigma'$) are isogenous to the
product of $[F:\Q]$ (resp. $[F':\Q]$) copies of the elliptic curve
$E$.

{\textbf{The $\infty$-types of the Gr\"{o}ssencharacters:}} For the
Gr\"{o}ssencharacter $\psi_{K_0}$ we have that
$\psi_{K_0}\bar{\psi}_{K_0}=N_{K/\Q}$ and that
$\psi_{K_0}(\bar{\mathfrak{q}})=\overline{{\psi}_{K_0}(\mathfrak{q})}$.
In particular we have that
\[
L(\psi_{K_0}^{-1},0)=L(\bar{\psi}_{K_0}N^{-1}_{K/\Q},0)=L(\bar{\psi}_{K_0},1)=L(\psi_{K_0},1)
\]
Moreover if we consider twists by finite cyclotomic characters, that
is characters of the form $\chi=\chi' \circ N_{K_{0}/\Q}$ for
$\chi'$ a finite Dirichlet character of $\Q$, we have that
$L(\psi_{K_0}\chi,1)=L(\psi_{K_0}^{-1}\chi,0)$. So from now on we
are going to consider characters of infinite type $-k\Sigma_0$,
$-k\Sigma$ and $-k\Sigma'$ for the various CM-types and $k\geq 1$
and the critical values that we study are at $s=0$. The above
equation explains why these are the values that we are interested
in.

We now recall the interpolation properties of a slight modification
of a $p$-adic measure $\mu_{\delta}^{KHT}$ for Hecke characters
constructed by Katz \cite{Katz2} and later extended by Hida and
Tilouine in \cite{HT}. Let $\mathfrak{C}$ be some integral ideal of
$K$ relative prime to $p$. Then for a Hecke character $\chi$ of
$G_K:=Gal(K(\mathfrak{C}p^{\infty})/K)$ of infinite type $-k\Sigma$
we have
\[
\frac{\int_{G_K}\chi(g)\mu_{\delta}^{KHT}(g)}{\Omega_p^{k\Sigma}} =
(\mathfrak{R}^{\times}:\rr^{\times})Local(\Sigma,\chi,\delta)\frac{(-1)^{kg}\Gamma(k)^{g}}{\sqrt{D_F}\Omega_{\infty}^{k\Sigma}}\times
\]
\[
\prod_{\mathfrak{q}|\mathfrak{FJ}}(1-\check{\chi}(\bar{\mathfrak{q}}))\prod_{\
\q | \mathfrak{F}}(1-\chi(\bar{\q}))\prod_{\mathfrak{p}\in
\Sigma_p}(1-\chi(\bar{\mathfrak{p}}))(1-\check{\chi}(\bar{\mathfrak{p}}))L(0,\chi)
\]
where the ideals $\mathfrak{F},\mathfrak{J}$ are some factors of
$\mathfrak{C}$ and will be defined in the next section. Also in the
next section we will explain in details the construction of the
above measure but for the time being we just want to indicate three
points:
\begin{enumerate}
\item The measure depends on a choice of an element $\delta \in K$,
totally imaginary with respect to $\Sigma$ and such that its
valuation at $\p \in \Sigma_p$ is equal with the valuation of the
absolute different of $K$.
\item The periods (archimedean and $p$-adic) that appear above depend only on the CM type $\Sigma$
and not at all on the finite part of the Hecke character $\chi$.
\item The factor $Local(\chi,\Sigma,\delta)$ is similar to some
epsilon factor of $\chi$ but not equal. We will explain more on that
shortly.
\end{enumerate}
We have fixed above a Gr\"{o}ssencharacter $\psi_K$ (note that $k=1$
for this character). We set, with notation as in the introduction,
$\mathfrak{C}:=\mathfrak{n}\mathfrak{f}\mathfrak{R}$ and we consider
the measure of $G_K$ defined for every finite character $\chi$ of
$G_K$ by
\[
\int_{G_K}\chi(g)\mu_{\psi_{K},\,\delta}^{KHT}(g):=\int_{G_K}\chi(g)\hat{\psi}^{-1}_K(g)\mu_{\delta}^{KHT}(g)
\]
where $\hat{\psi}_K$ is the $p$-adic avatar of $\psi_K$ constructed
by Weil. We will show later that in this case we can set
$\Omega_p^{\Sigma}= \Omega_p(E)^g$ and
$\Omega_{\infty}^{\Sigma}=\Omega(E)^g$. Then we define the measure
$\mu_{E/F}$ discussed above by (recall that
$G_F:=Gal(F(\mathfrak{n}p^\infty)/F)$) by
\[
\int_{G_F}\chi(g)
\mu_{E/F}(g):=\frac{\int_{G_K}\tilde{\chi}(g)\mu_{\psi_{K},\,\delta}^{KHT}(g)}{\Omega_p(E)^g}
\]
where $\tilde{\chi}$ is the base change of $\chi$ from $F$ to $K$.
Then from our remarks on the critical value $L(E/F,1)$ we see that
this measure interpolates twists of this critical value of $E/F$.
The same considerations apply also for the datum $(K',F',
\psi_{K'},G_{F'},G_{K'})$. We now observe that our main theorem
above amounts to prove the following congruences, under of course
the same assumptions as in the theorem above,
\[
\frac{\int_{G_K}\epsilon \circ ver \,\,\,\,
d\mu^{KHT}_{\psi_K,\,\delta}}{\Omega_{p}(E)^g} \equiv
\frac{\int_{G_{K'}}\epsilon\,\,\,\,\,
d\mu^{KHT}_{\psi_{K'},\,\delta'}}{\Omega_p(E)^{pg}} \mod{p\Z_p}
\]
for all $\epsilon$ locally constant $\Z_p$-valued functions on
$G_{K'}$ with $\epsilon^{\gamma}=\epsilon$ and belong to the
cyclotomic part of it, i.e. when it is written as a sum of finite
order characters it is of the form $\epsilon= \sum c_{\chi}\chi$
with $\chi^{\tau} = \chi$.

However these congruences do not hold when the extension $F'/F$ is
ramified. In order to overcome this difficulty we will need to
modify (twist) the measure of Katz-Hida-Tilouine over $K'$. The key
question is whether the factor $Local(\chi,\Sigma,\delta)$ is the
``right'' one. We believe that this is not so when the extension
$F'/F$ is ramified (in the appendix we offer evidences for this) and
actually with our modification we aim to overcome this problem. In
short, we will define for the datum $(K',F',
\psi_{K'},G_{F'},G_{K'})$ the measure $\mu_{E/F'}$ as
\[
\int_{G_{F'}}\chi(g)
\mu_{E/F'}(g):=\frac{\int_{G_{K'}}\tilde{\chi}(g)\mu_{\psi_{K'},\,\delta,
\xi}^{KHT,tw}(g)}{\Omega_p(E)^{pg}}:=\frac{\int_{G_{K'}}\chi(g)\hat{\psi}^{-1}_{K'}(g)\mu_{\delta,
\xi}^{KHT,tw}(g)}{\Omega_p(E)^{pg}}
\]
where the measure $\mu_{\delta, \xi}^{KHT,tw}$, called in this work
the twisted Katz-Hida-Tilouine measure, will be defined in section
4. Then we will show that
\[
\frac{\int_{G_K}\epsilon \circ ver \,\,\,\,
d\mu^{KHT}_{\psi_K,\,\delta}}{\Omega_{p}(E)^g} \equiv
\frac{\int_{G_{K'}}\epsilon\,\,\,\,\,
d\mu^{KHT,tw}_{\psi_{K'},\,\delta,\xi}}{\Omega_p(E)^{pg}}
\mod{p\Z_p}
\]
for all $\epsilon$ locally constant $\Z_p$-valued functions on
$G_{K'}$ with $\epsilon^{\gamma}=\epsilon$ and belong to the
cyclotomic part of it.

{\textbf{The measure of Colmez and Schneps:}} We close this section
by making a few more observations. In the setting that we consider
we can apply the construction of \cite{CS}. Indeed in this work
Colmez and Schneps construct a measure of
$G_{K}:=Gal(K(\mathfrak{C}p^\infty)/K)$ such that for every
Gr\"{o}ssencharacter $\chi$ of $K$ of infinite type $\chi((a))=
N_{K/K_0}(a))^{-k}$ for $a \equiv 1$ modulo the conductor of $\chi$
has the interpolation property
\[
\int_{G}\chi(g)\mu^{CS}(g) =
(-1)^{kg}\Gamma(k)^{g}\prod_{\mathfrak{p} \in
\Sigma_p}e_{\mathfrak{p}}(\chi,\psi,dx_1)
\prod_{\mathfrak{q}|\mathfrak{C}}(1-\chi(\mathfrak{q}))\prod_{\mathfrak{p}\in
\Sigma_p}(1-\chi(\bar{\mathfrak{p}}))(1-\check{\chi}(\bar{\mathfrak{p}}))L(0,\chi)
\]
Although Colmez and Schneps do not work the algebraicity of the
measure we see here that their measure is normalized differently
from that of Katz-Hida-Tilouine with respect to the local factors.
Here one gets the epsilon factors of Deligne as local factors. It is
exactly this construction that we explore in a common work with
Filippo Nuccio \cite{BN} where we try to obtain a different proof of
the congruences hoping also to relax some of the assumptions of the
current work.

\section{The Eisenstein measure of Katz-Hida-Tilouine}

We start by recalling some Eisenstein series appearing in the work
of Katz \cite{Katz2} and Hida and Tilouine \cite{HT}. We follow the
notations of Hida and Tilouine and introduce the setting described
in their paper. We consider a totally real field $F$ with ring of
integers $\mathfrak{r}$ and write $\theta$ for the different of
$F/\Q$. We also fix an odd prime $p$. For an ideal $\mathfrak{a}$ of
$F$ we write $\mathfrak{a}^{*}=\mathfrak{a}^{-1}\theta^{-1}$. We fix
a fractional ideal $\mathfrak{c}$ and take two fractional ideals
$\mathfrak{a}$ and $\mathfrak{b}$ such that
$\mathfrak{a}\mathfrak{b}^{-1} = \mathfrak{c}$. Let $\phi :
\{\mathfrak{r}_p \times (\mathfrak{r}/\mathfrak{f}')\} \times
\{\mathfrak{r}_p \times (\mathfrak{r}/\mathfrak{f}'')\} \rightarrow
\C$ be a locally constant function such that
$\phi(\epsilon^{-1}x,\epsilon y)=N(\epsilon)^{k}\psi(x,y)$, for all
$\epsilon \in \mathfrak{r}^{\times}$, $k$ some positive integer and
$\mathfrak{f}'$ and $\mathfrak{f}''$ integral ideals relative prime
to $p$. We put $\mathfrak{f}:= \mathfrak{f}' \cap \mathfrak{f}''$
and we assume that the ideals $\mathfrak{a},\mathfrak{b}$ and
$\mathfrak{c}$ are prime to $\mathfrak{f}$. Moreover we assume that
the ideal $\mathfrak{a}$ is prime to $p$ and that $p$ does not
divide $\mathfrak{b}$. However we allow the case
$(p,\mathfrak{b}^{-1}) \neq 1$. We consider the natural projection
$T:=\{\mathfrak{r}_p \times (\mathfrak{r}/\mathfrak{f})\} \times
\{\mathfrak{r}_p \times (\mathfrak{r}/\mathfrak{f})\} \rightarrow
\{\mathfrak{r}_p \times (\mathfrak{r}/\mathfrak{f}')\} \times
\{\mathfrak{r}_p \times (\mathfrak{r}/\mathfrak{f}'')\}$ and
consider $\phi$ as a locally constant function on $T$.

We define the partial Fourier transform of the first variable of
$\phi$ and write
\[
P\phi : \{(F_p / \theta_p^{-1} \times \mathfrak{f}^*/\theta^{-1})
\times (\mathfrak{r}_p \times (\mathfrak{r}/\mathfrak{f}))\}
\rightarrow \C
\]
as
\[
P\phi(x,y)= p^{\alpha[F:\Q]N(\mathfrak{f})^{-1}} \sum_{a \in
X_{\alpha}}\phi(a,y)e_F(ax)
\]
for $\phi$ factoring through $X_\alpha \times \mathfrak{r}_p \times
(\mathfrak{r}/\mathfrak{f})$ with $X_\alpha:= \mathfrak{r}_p/\alpha
r_p \times (\mathfrak{r}/\mathfrak{f})$ with $\alpha \in
\mathbb{N}$.

We attach an Eisenstein series to $\phi$. This is realized as a rule
on triples $(\mathcal{L},\lambda,\imath)$ where $\imath$ a
$p^{\infty}\mathfrak{f}^2$ level structure.

{\textbf{The partial Tate module:}} From the $p^\infty
\mathfrak{f}^2$ structure after restriction we obtain a short exact
sequence of $\mathfrak{r}_p \times
\mathfrak{r}/\mathfrak{f}\mathfrak{r}$-modules
\[
0 \rightarrow \theta^{-1} \otimes (\mathfrak{r}_p \times
\mathfrak{r}/\mathfrak{f}\mathfrak{r}) \rightarrow \mathcal{L}
\otimes (\mathfrak{r}_p \times
\mathfrak{r}/\mathfrak{f}\mathfrak{r}) \rightarrow ? \rightarrow 0
\]
From the given polarization after we obtain an isomorphism
\[
\bigwedge^2_{\mathfrak{r}_p \times
\mathfrak{r}/\mathfrak{f}\mathfrak{r}}(\mathfrak{L}\otimes
(\mathfrak{r}_p \times \mathfrak{r}/\mathfrak{f}\mathfrak{r})) \cong
\theta^{-1}\mathfrak{c}^{-1} \otimes (\mathfrak{r}_p \times
\mathfrak{r}/\mathfrak{f}\mathfrak{r})
\]
From where we conclude that
\[
?\cong \mathfrak{c}^{-1} \otimes (\mathfrak{r}_p \times
\mathfrak{r}/\mathfrak{f}\mathfrak{r}) \cong \mathfrak{c}_p^{-1}
\mathfrak{r}_p \times \mathfrak{r}/\mathfrak{f}\mathfrak{r}
\]
We obtain the projection $\pi'$
\[ \pi' : (\mathcal{L} \otimes \rr_p) \times
\mathcal{L}/\ff\mathcal{L} \rightarrow \mathfrak{c}_p^{-1}
\mathfrak{r}_p \times \mathfrak{r}/\mathfrak{f}\mathfrak{r} \]

Following Hida and Tilouine we then define the partial Tate module
$PV(\mathcal{L})$ as a submodule of $\mathcal{L} \otimes F_{p\ff}$
that contains $\mathcal{L} \otimes \rr_{p\ff}$ such that
\[
PV(\mathcal{L})/\mathcal{L} \otimes \rr_{p\ff} \cong
Im(F_p/\theta^{-1} \times \ff^*/\theta^{-1} \rightarrow
p^{-\infty}\mathcal{L}/\mathcal{L} \times
\ff^{-1}\mathcal{L}/\mathcal{L})
\]
Then as explained in \cite{HT} one obtains the projections
\[
\pi':PV(\mathcal{L}) \twoheadrightarrow \mathfrak{c}_p^{-1} \rr_p
\times \rr/\ff\rr,\,\,\,\,and,\,\,\,\,\pi:PV(\mathcal{L})
\twoheadrightarrow F_p/\theta_p^{-1} \times \ff^*/\theta^{-1}
\]
We set $\mathcal{L}(\ff p):= \ff^{-1}p^{-\infty}\mathcal{L} \cap
PV(\mathcal{L}$ and for a $w \in \mathcal{L}(\ff p)$ we define
$P\phi(w):=P\phi(\pi(w),\pi'(w))$. For an integer $k \geq 1$ we
define the $\mathfrak{c}$-polarized HMF $E_k(\phi,\mathfrak{c})$ by
\[
E_k(\phi,\mathfrak{c})(\mathcal{L},\lambda,\imath):=
\frac{(-1)^{kg}\Gamma(k+s)^g}{\sqrt{(D_F)}}\sum_{w \in
\mathcal{L}(\ff
p)/\rr^{\times}}\frac{P\phi(w)}{N(w)^k|N(w)^{2s}|}\mid_{s=0}
\]
Then from \cite{Katz2,HT} we have the following proposition,

\begin{proposition}\label{q-expansion} There exists a $\mathfrak{c}$-HMF $E_{k}(\phi,\mathfrak{c})$ of level
$p^{\infty}\mathfrak{f}^{2}$ and weight $k$ such that if $k \geq 2$
or $\phi(a,0)= 0$ for all $a$ then its $q$-expansion is given by
\[
E_{k}(\phi,\mathfrak{c})(Tate_{\mathfrak{a},\mathfrak{b}}(q),\lambda_{can},\omega_{can},i_{can})=
N(\mathfrak{a})\{2^{-g}L(1-k,\phi,\mathfrak{a})\]
\[ + \sum_{ 0
\ll \xi \in \mathfrak{ab}}\sum_{(a,b) \in (\mathfrak{a} \times
\mathfrak{b})/\mathfrak{r}^{\times}, ab=\xi} \phi(a,b)
sgn(N(a))N(a)^{k-1}q^{\xi}\}
\]
where $L(s;\phi,\mathfrak{a})= \sum_{x \in
(\mathfrak{a}-0)/{\mathfrak{r}}^{\times}}\phi(x,0)sgn(N(x))^{k}|N(x)|^{-s}$.
\end{proposition}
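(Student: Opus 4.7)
The plan is to evaluate the defining lattice sum at the Tate object, unfold the partial Fourier transform, and then apply Hecke's classical expansion technique to read off the $q$-coefficients.

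First, I would specialize the partial Tate module $PV(\mathcal{L})$ and the projections $\pi,\pi'$ at the lattice $\mathcal{L} = \theta^{-1}\mathfrak{a} \oplus \mathfrak{b}$ of $Tate_{\mathfrak{a},\mathfrak{b}}(q)$ with its canonical $p^\infty\ff^{2}$-level structure $\imath_{can}$. Under this identification, elements of $\mathcal{L}(\ff p)/\rr^\times$ correspond bijectively to pairs $(a,b) \in (\theta^{-1}\mathfrak{a}\times\mathfrak{b})/\rr^\times$, with $\pi(w)$ recording the first component modulo $\theta^{-1}$ (paired via the chosen differential) and $\pi'(w)$ the second modulo $\ff\mathfrak{b}$, and $N(w)$ becoming the natural product norm.

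Second, I would apply Poisson summation in the first variable to unfold $P\phi$ back into $\phi$: this step is essentially tautological by the very definition of $P\phi$, up to the explicit normalisation constant $p^{\alpha[F:\Q]}N(\ff)^{-1}$. After interchanging the order of summation one is reduced to a totally real Hecke-type Eisenstein sum over $(a,b) \in (\mathfrak{a}\times\mathfrak{b})/\rr^\times$, with the factor $N(\mathfrak{a})$ in the statement arising from absorbing $\theta^{-1}$ into the first lattice factor.

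Third, I would split the resulting sum into the $a = 0$ and $a \neq 0$ parts and apply Hecke's expansion. The $a \neq 0$ contributions, after a further Poisson summation in $b$, produce for each totally positive $\xi \in \mathfrak{ab}$ the coefficient
\[
\sum_{(a,b) \in (\mathfrak{a}\times\mathfrak{b})/\rr^\times,\, ab=\xi}\phi(a,b)\,sgn(N(a))N(a)^{k-1}.
\]
The $a = 0$ part, together with the constant Fourier mode from the second Poisson step, supplies the constant term $2^{-g}L(1-k,\phi,\mathfrak{a})$ via Hecke's functional equation for the partial Dirichlet series attached to $F$; the sign $(-1)^{kg}$, the gamma factor $\Gamma(k)^g$, and the $\sqrt{D_F}$ appearing in the pre-factor of $E_k$ all cancel correctly. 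The hypothesis $k \geq 2$ or $\phi(a,0) = 0$ for all $a$ is precisely what is needed to guarantee absolute convergence at $k = 1$.

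The main obstacle, and the technical heart of the computation, is the careful bookkeeping of the $p$-adic and $\ff$-adic level structures during the two Poisson summation steps, together with the precise functional-equation identification of the constant term. These calculations are carried out explicitly in Katz \cite{Katz2} and Hida--Tilouine \cite{HT}, whose conventions I would follow closely so that all normalisations match.
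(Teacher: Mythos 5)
Your proposal is essentially the paper's own treatment: the paper states this proposition without proof, referring directly to Katz \cite{Katz2} and Hida--Tilouine \cite{HT}, and your outline (identification of the lattice of $Tate_{\mathfrak{a},\mathfrak{b}}(q)$ with its canonical level structure, unwinding of the partial Fourier transform by Poisson/Hecke summation, and evaluation of the constant term as $2^{-g}L(1-k,\phi,\mathfrak{a})$) is a fair summary of the computation carried out in those references. One small correction: the hypothesis $k\geq 2$ or $\phi(a,0)=0$ is not a matter of absolute convergence (the series is in any case defined by the $|N(w)|^{2s}$-regularization at $s=0$), but rather ensures holomorphy and the stated form of the constant term in the weight-one case; since you, like the paper, defer these details to \cite{Katz2,HT}, this does not affect the assessment.
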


{\textbf{Remark:}} The following remarks are in order
\begin{enumerate}
\item  In the case that the locally constant function $\phi$ is supported
on $T^{\times}:=\{\mathfrak{r}^{\times}_p \times
(\mathfrak{r}/\mathfrak{f})^{\times}\} \times
\{\mathfrak{r}^{\times}_p \times
(\mathfrak{r}/\mathfrak{f})^{\times}\}$ then the Eisenstein series
has constant term equal to zero at the cusp
$(\mathfrak{a},\mathfrak{b})$.
\item Note that the $p$-integrality of the $q$-expansion follows from
the values of the function $\phi$ and from the fact that
($\mathfrak{a},p)=1$.
\end{enumerate}

{\textbf{The Eisenstein Measure of Katz-Hida-Tilouine:}} Hida and
Tilouine extended the work of Katz to obtain measures of the Galois
group $Gal(K(\mathfrak{C}p^{\infty})/K)$ for $K$ a CM field and
$\mathfrak{C}$ an integral ideal of $K$. We describe briefly the
construction and the interpolation properties of these measures. We
start with the decomposition
$\mathfrak{C}=\mathfrak{F}\mathfrak{F_c}\mathfrak{J}$ such that
\[
\mathfrak{F}+\mathfrak{F}_c =
\mathfrak{R},\,\,\,\mathfrak{F}+\mathfrak{F}^c=\mathfrak{R},\,\,\,\mathfrak{F}_c+\mathfrak{F}_c^c=\mathfrak{R},\,\,\,\mathfrak{F}_c
\supset \mathfrak{F}^c
\]
and $\mathfrak{J}$ consists of ideals that inert or ramify in $K/F$.
We set $\ff':=\mathfrak{F}\mathfrak{J}\cap F$ and $\ff'':=
\mathfrak{F}_c\mathfrak{J}\cap F$, $\ff:=\ff' \cap \ff''=\ff'$,
$\mathfrak{s}= \mathfrak{F}_c \cap F$ and $\mathfrak{j}:=
\mathfrak{J}\cap F$. As in Hida and Tilouine we consider the
homomorphism obtained from class field theory
\[
i:\{(\rr_p^{\times} \times (\rr/\ff)^{\times} \times \rr_p^{\times}
\times (\rr/\mathfrak{s})^{\times})/\overline{\rr^{\times}} \}
\rightarrow Cl_K(\mathfrak{C}p^{\infty})
\]
We write $Cl_{K}^{-}(\mathfrak{J})$ for the quotient of
$Cl_K(\mathfrak{J})$ by the natural image of
$(\rr/\mathfrak{j})^{\times}$. If $\{\mathfrak{U}_j\}_j$ are
representatives of $Cl^{-}_K(\mathfrak{J})$, which we pick relative
prime to $p\mathfrak{C}\mathfrak{C}^c$, then we have that
$Cl_K(\mathfrak{C}p^{\infty})= \coprod_jIm(i)[\mathfrak{U}_j]^{-1}$
where $[\mathfrak{U}_j]$ the image of $\mathfrak{U}_j$ in
$Cl_K(\mathfrak{C}p^{\infty})$. We use the surjection
$(\rr/\ff)^{\times} \rightarrow (\rr/\mathfrak{s})^{\times}$ to
obtain a projection
\[
T:= \{(\rr_p^{\times} \times (\rr/\ff)^{\times} \times
\rr_p^{\times} \times (\rr/\ff)^{\times})/\overline{\rr^{\times}} \}
\twoheadrightarrow \{(\rr_p^{\times} \times (\rr/\ff)^{\times}
\times \rr_p^{\times} \times
(\rr/\mathfrak{s})^{\times})/\overline{\rr^{\times}} \}
\]
Given a continuous function $\phi$ of $Cl_K(\mathfrak{C}p^{\infty})
\cong Gal(K(\mathfrak{C}p^{\infty})/K)=:G $ we define a function
$\tilde{\phi}_j$ on $Im(i)[\mathfrak{U}_j]$ by $\tilde{\phi}_j(x):=
\phi(x[\mathfrak{U}_j^{-1}])$ and through the above projection we
view $\tilde{\phi}_j$ as function on $T$. Moreover we write
$\mathbf{N}$ for the function
\[
\mathbf{N}: (\rr_p^{\times} \times (\rr/\ff)^{\times} \times
\rr_p^{\times} \times (\rr/\ff)^{\times}) \rightarrow
\mathbb{Z}^{\times}_p
\]
given by $\mathbf{N}_k(x,a,y,b)= \prod_{\sigma \in \Sigma_p}
x_{\sigma}$. The we define functions $\phi_j$ on $(\rr_p^{\times}
\times (\rr/\ff)^{\times} \times \rr_p^{\times} \times
(\rr/\ff)^{\times})$ by
$\phi_j(x,a,y,b):=\mathbf{N}(x)^{-1}\tilde{\phi}_j(x^{-1},a^{-1},y,b)$.

In order to define the measure of Katz, Hida and Tilouine we need to
pick polarization of HBAV with complex multiplication by
$\mathfrak{R}$ and CM type $\Sigma$. We pick an element $\delta \in
K$ such that
\begin{enumerate}
\item $\delta^{c}=-\delta$ and $Im(\delta^{\sigma}) > 0$ for all
$\sigma \in \Sigma$,
\item The polarization $<u,v>:=\frac{u^cv-uv^c}{2\delta}$ on
$\mathfrak{R}$ induces the isomorphism $\mathfrak{R}
\wedge_{\mathfrak{r}} \mathfrak{R} \cong \theta^{-1}
\mathfrak{c}^{-1}$ for $\mathfrak{c}$ relative prime to $p$.
\end{enumerate}
After the above choice of $\delta$ we can attach (see \cite{HT} page
211 for details) to the fractional ideals $\mathfrak{U}_j$ of $K$ a
datum
$(X(\mathfrak{U}_j),\lambda(\mathfrak{U}_j),\imath(\mathfrak{U}_j))$
consisting of a HBAV $X(\mathfrak{U}_j)$ with CM of type
$(K,\Sigma)$, of polarization
$\mathfrak{c}\mathfrak{U}_j\mathfrak{U}^c_j$ and level structure
$\imath(\mathfrak{U}_j)$ of type $p^\infty\mathfrak{f}^2$ .

We define the measure, see \cite[pages 260-261]{Katz2} as
\[
\int_{G}\phi(g) \mu^{KHT}(g):= \sum_j \int_{T}\tilde{\phi}_j dE_j :=
\sum_j
E_1(\phi_j,\mathfrak{c}_j)(X(\mathfrak{U}_j),\lambda(\mathfrak{U}_j),\imath(\mathfrak{U}_j))
\]
where $\mathfrak{c}_j:=
\mathfrak{c}(\mathfrak{U}_j\mathfrak{U}^c_j)^{-1}$. We note here
that when $\phi$ is a character of infinite type $-k\Sigma$ then we
have that
\[
E_1(\phi_j,\mathfrak{c}_j)(X(\mathfrak{U}_j),\lambda(\mathfrak{U}_j),\imath(\mathfrak{U}_j))=
E_k(\phi_{finite,j},\mathfrak{c}_j)(X(\mathfrak{U}_j),\lambda(\mathfrak{U}_j),\imath(\mathfrak{U}_j))
\]
where $\phi_{finite,j}$ is as in \cite{Katz2} page 277 and the above
equation is explained in (5.5.7) of (loc. cit.). Here we note an
important difference of our construction from the construction of
Hida and Tilouine. We do not use the function $\phi^0_j$ in
Hida-Tilouine's notation (page 209). This is the reason why the
following measure has slightly different interpolation properties
from theirs. The reason for doing that is related with the values of
the measures $\mu_{E/F}$ and $\mu_{E/F'}$ that we will define later.
If we want these measures to take $\Z_p$ values then we have to make
sure that we put the right epsilon factors (viewed as periods) also
away from $p$.

\begin{theorem}[(Interpolation Properties)]\label{measureKHT} For a character $\chi$
of $G:=Gal(K(\mathfrak{C}p^{\infty})/K)$ of infinite type $-k\Sigma$
we have
\[
\frac{\int_{G}\chi(g)\mu_{\delta}^{KHT}(g)}{\Omega_p^{k\Sigma}} =
(\mathfrak{R}^{\times}:\rr^{\times})Local(\Sigma,\chi,\delta)\frac{(-1)^{kg}\Gamma(k)^{g}}{\sqrt{D_F}\Omega_{\infty}^{k\Sigma}}\times
\]
\[
\prod_{\mathfrak{q}|\mathfrak{FJ}}(1-\check{\chi}(\bar{\mathfrak{q}}))\prod_{\
\q | \mathfrak{F}}(1-\chi(\bar{\q}))\prod_{\mathfrak{p}\in
\Sigma_p}(1-\chi(\bar{\mathfrak{p}}))(1-\check{\chi}(\bar{\mathfrak{p}}))L(0,\chi)
\]
\end{theorem}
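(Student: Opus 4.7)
The proof is a standard unravelling going back to Damerell, recast as in Katz \cite{Katz2} and Hida-Tilouine \cite{HT}; the only novelty is the local-factor bookkeeping induced by our use of $\phi_j$ in place of Hida-Tilouine's $\phi^0_j$. First I would expand the integral according to the definition of the measure,
$$\int_G \chi(g)\,\mu^{KHT}_\delta(g) = \sum_j E_1(\phi_j,\mathfrak{c}_j)\bigl(X(\mathfrak{U}_j),\lambda(\mathfrak{U}_j),\imath(\mathfrak{U}_j)\bigr),$$
and invoke the identity $E_1(\phi_j,\mathfrak{c}_j) = E_k(\phi_{\text{finite},j},\mathfrak{c}_j)$ noted just before the theorem, which holds precisely because $\chi$ is of infinite type $-k\Sigma$. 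This reduces the theorem to the algebraic evaluation of the holomorphic Eisenstein series $E_k$ of Proposition \ref{q-expansion} at the CM points $X(\mathfrak{U}_j)$ followed by summation of the resulting Dirichlet series over $j$.

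Next I would plug the CM uniformization $X(\mathfrak{U}_j)(\C) \cong \C^\Sigma/\Omega_\infty^\Sigma \cdot \mathfrak{U}_j$ into the defining series of $E_k$. Under this identification the partial Tate module $\mathcal{L}(\ff p)$ becomes $\mathfrak{U}_j(\ff p)$, and the factor $N(w)^{k}$ in the denominator pulls out $\Omega_\infty^{k\Sigma}$. After summing over representatives of $Cl^-_K(\mathfrak{J})$ and passing from the quotient by $\rr^\times$ to the quotient by $\mathfrak{R}^\times$ (which is where $(\mathfrak{R}^\times : \rr^\times)$ enters), one recovers a Dirichlet series that, once the Euler factors below are separated, gives $L(0,\chi)$ times $(-1)^{kg}\Gamma(k)^g/(\sqrt{D_F}\,\Omega_\infty^{k\Sigma})$. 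The $p$-adic period $\Omega_p^{k\Sigma}$ appears on the other side through Katz's comparison between complex and $p$-adic modular forms: the algebraic value computed via Serre-Tate coordinates at the ordinary CM point $X(\mathfrak{U}_j) \otimes \mathcal{J}_\infty$ differs from its complex evaluation by a factor of $\Omega_p^{k\Sigma}/\Omega_\infty^{k\Sigma}$.

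The Euler factors come from the support conditions of $\phi_j$ together with the partial Fourier transform $P$. The first variable of $\phi_j$ is supported on $\rr_p^\times \times (\rr/\ff)^\times$ (built into the twist by $\mathbf{N}$), and this removes the Euler factors of $\chi$ at primes of $\Sigma_p$ and at primes dividing $\mathfrak{F}$, producing $\prod_{\mathfrak{p}\in\Sigma_p}(1-\chi(\bar{\mathfrak{p}}))$ and $\prod_{\mathfrak{q}|\mathfrak{F}}(1-\chi(\bar{\mathfrak{q}}))$. In the second variable, the partial Fourier transform lands in $\ff^{*}/\theta^{-1}$, and the corresponding support considerations at primes of $\Sigma_p$ and at primes dividing $\mathfrak{F}\mathfrak{J}$ contribute $\prod_{\mathfrak{p}\in\Sigma_p}(1-\check\chi(\bar{\mathfrak{p}}))$ and $\prod_{\mathfrak{q}|\mathfrak{F}\mathfrak{J}}(1-\check\chi(\bar{\mathfrak{q}}))$. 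The remaining Gauss-sum-like integrals at primes dividing $\mathfrak{F}_c$, together with the $\delta$-dependent normalization of the measure on $\mathcal{L} \otimes F_p$, collect into the factor $Local(\Sigma,\chi,\delta)$.

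The main obstacle is the precise identification of $Local(\Sigma,\chi,\delta)$. It is \emph{not} equal to Deligne's epsilon factor of $\chi$; the discrepancy is a product of local Gauss sums twisted by $\delta$, encoding the polarization data $\langle u,v \rangle = (u^c v - u v^c)/(2\delta)$. Carefully matching this normalization, so that the resulting measures $\mu_{E/F}$ and $\mu_{E/F'}$ end up $\Z_p$-valued, is precisely the point of our modification of Hida-Tilouine (the replacement of $\phi^0_j$ by $\phi_j$) and is the only step where care beyond the standard computations in \cite{Katz2,HT} is required.
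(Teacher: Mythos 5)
Your proposal takes essentially the same route as the paper: the paper's own proof is little more than a citation to Katz \cite{Katz2} and Hida--Tilouine \cite{HT}, observing that the only change is that the partial Fourier transform is also performed at the primes dividing $\mathfrak{F}\mathfrak{J}$ (the use of $\phi_j$ instead of $\phi^0_j$), which merely relocates certain epsilon factors away from $p$; your sketch fleshes out exactly this standard unravelling (definition of the measure, the identity $E_1(\phi_j,\mathfrak{c}_j)=E_k(\phi_{finite,j},\mathfrak{c}_j)$, evaluation at the CM triples, periods, and Euler factors from support conditions). One correction of bookkeeping: the Gauss-sum contributions making up $Local(\Sigma,\chi,\delta)$ do not arise at the primes dividing $\mathfrak{F}_c$ (the second variable, where no Fourier transform is taken) but precisely at the primes dividing $\mathfrak{F}\mathfrak{J}$ and at the primes in $\Sigma_p$, in accordance with the definition $Local(\chi,\Sigma,\delta)=\prod_{\q\mid\mathfrak{F}\mathfrak{J}}Local(\chi,\delta)_{\q}\prod_{\p\in\Sigma_p}Local(\chi,\delta)_{\p}$; indeed, getting these factors at $\mathfrak{F}\mathfrak{J}$ rather than only at $p$ is the whole point of replacing $\phi^0_j$ by $\phi_j$.
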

\begin{proof} This is in principle the measure constructed by Katz and Hida-Tilouine in
\cite{HT,Katz2}. The main difference of the above formula with the
one in Theorem 4.1 of \cite{HT} is that we do also the partial
Fourier transform for the primes that divide
$\mathfrak{F}\mathfrak{J}$ (this is why in our definition we used
$\phi$ and not $\phi^{0}$ as Hida and Tilouine do (page 209). Note
that the computations in their work are local so what we do amounts
simply moving some of the epsilon factors away from $p$ to the other
part of the functional equation (compare with theorem 4.2 in Hida
and Tilouine).
\end{proof}

We now explain the local factor $Local(\chi,\Sigma,\delta)$ that
shows up in the interpolation formula above. So we let $\chi$ be a
Gr\"{o}ssencharacter of a CM field $K$ of infinite type (after
fixing $incl(\infty): \bar{\Q} \hookrightarrow \C$)
\[
\chi_{\infty}:K^\times \rightarrow \bar{\Q} \hookrightarrow \C
\]
given by
\[
\chi_{\infty}(a) = \prod_{\sigma \in
\Sigma}\frac{1}{\sigma(a)^k}\left(\frac{\sigma(\bar{a})}{\sigma(a)}\right)^{d(\sigma)}
\]
We write $c:\A_K^\times/K^\times \rightarrow \C^\times$ for the
corresponding adelic character and we decompose it to
$c=\prod_{\sigma \in \Sigma}c_\sigma \prod_v c_v$. The infinite type
of the character can be read from the parts at infinite $c_\sigma
:\C^\times \rightarrow \C^\times$. These are given by
\[
c_\sigma(re^{i\theta})=c_\sigma(z)=\frac{z^{k+d(\sigma)}}{\bar{z}^{d(\sigma)}}=r^k
e^{i\theta(k+2d(\sigma))}
\]
Let as pick $\mathfrak{q}$, a prime ideal of $K$ which we also take
relative prime to $2$. Then we define
\[
Local(\chi,\delta)_{\mathfrak{q}}:=\frac{\hat{F}_{\mathfrak{q},1}\left(\frac{-1}{2\delta
a}\right)}{c_{\mathfrak{q}}(a)}
\]
where $a \in K$ such that
$ord_{\mathfrak{q}}(a)=ord_{\mathfrak{q}}(cond(\chi))$. Here
\[
\hat{F}_{\mathfrak{q},1}(x):=
\frac{1}{N(\mathfrak{q})^{ord_{\mathfrak{q}}cond(\chi)}}\sum_{u \in
(\mathfrak{R}/\mathfrak{q})^\times}c_{\mathfrak{q}}(y)exp(-2\pi i
\,Tr_{\mathfrak{q}}(ux))
\]
Then in the formula we have
\[
Local(\chi,\Sigma,\delta):=\prod_{\q |
\mathfrak{FJ}}Local(\chi,\delta)_{\q} \prod_{\p \in
\Sigma_p}Local(\chi,\delta)_{\p}
\]

{\textbf{The discrepancy of the $\epsilon$-factors}:} Our next goal
is to understand the relation of the local factor
$Local(\Sigma,\chi,\delta)$ appearing in the interpolation
properties of the Hida-Katz-Tilouine measure and the standard
epsilon factors of Tate-Deligne. We start by normalizing properly
the epsilon factors. We follow Tate's article \cite{Tate} for the
definition and properties of the epsilon factors of Deligne. We
denote Delinge's factor with $\epsilon_{\mathfrak{p}}(\chi,\psi,dx)$
as is defined in Tate's article \cite{Tate} where as $\psi$ we pick
the additive character of $K_{\mathfrak{p}}$ given by $exp \circ
(-Tr_{\mathfrak{p}})$ (as above in the Gauss sum appearing in Katz's
work) and $dx$ we pick the Haar measure that gives measure 1 to the
units of $\mathfrak{R}_\mathfrak{p}$. From the formula (3.6.11) in
Tate (there is a typo there!) we have that
\[
\epsilon_{\mathfrak{p}}(\chi^{-1},\psi,dx)=
c_{\mathfrak{p}}^{-1}(\alpha)N(\theta_K(\mathfrak{p}))\sum_{u \in
(\mathfrak{R}/\mathfrak{p})^\times}c_{\mathfrak{p}}(y)exp(-2\pi i
\,Tr_{\mathfrak{p}}(\frac{u}{\alpha}))
\]
where $\alpha$ is an element with $ord_{\mathfrak{p}}(\alpha) =
n(\chi) + n(\psi)$. In particular we conclude that
\[
\epsilon_{\mathfrak{p}}(\chi^{-1},\psi,dx)=
N(\mathfrak{p})^{ord_{\mathfrak{p}}cond(\chi)}c^{-1}_{\mathfrak{p}}(\delta)N(\theta_K(\mathfrak{p}))Local(\chi,\Sigma,\delta)_{\mathfrak{p}}
\]
We conclude
\begin{lemma}\label{epsilonfactors}The relation between Katz and Deligne's epsilon factors
is given by
\[
\epsilon_{\mathfrak{p}}(\chi^{-1},\psi,dx)=
N(\mathfrak{p})^{ord_{\mathfrak{p}}cond(\chi)}c^{-1}_{\mathfrak{p}}(\delta)N(\theta_K(\mathfrak{p}))Local(\chi,\Sigma,\delta)_{\mathfrak{p}}
\]
\end{lemma}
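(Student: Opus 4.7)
The plan is to derive the identity by plugging an appropriate auxiliary element $\alpha$ into Tate's local formula (3.6.11) and matching the resulting Gauss-sum expression against the definition of $Local(\chi,\Sigma,\delta)_\mathfrak{p}$ given just above. There is nothing conceptually new beyond the derivation already sketched in the paragraph preceding the lemma; the task is essentially to make that derivation rigorous and to verify that each choice and normalization is compatible.

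First I would recall Tate's formula, which expresses
\[
\epsilon_\mathfrak{p}(\chi^{-1},\psi,dx) = c_\mathfrak{p}^{-1}(\alpha)\,N(\theta_K(\mathfrak{p}))\sum_{u \in (\mathfrak{R}/\mathfrak{p}^{n(\chi)})^\times} c_\mathfrak{p}(u)\exp(-2\pi i\,\mathrm{Tr}_\mathfrak{p}(u/\alpha)),
\]
valid for any $\alpha \in K_\mathfrak{p}^\times$ with $\mathrm{ord}_\mathfrak{p}(\alpha) = n(\chi) + n(\psi)$. Since we chose $\psi = \exp\circ(-\mathrm{Tr}_\mathfrak{p})$, the local conductor of $\psi$ is the local different, so $n(\psi) = \mathrm{ord}_\mathfrak{p}(\theta_K)$.

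Second I would take $\alpha := -2\delta a$, with $a$ the element appearing in the definition of $Local(\chi,\Sigma,\delta)_\mathfrak{p}$, so that $\mathrm{ord}_\mathfrak{p}(a) = \mathrm{ord}_\mathfrak{p}(\mathrm{cond}(\chi)) = n(\chi)$. The assumption that $\mathfrak{p}$ is coprime to $2$, combined with the valuation property of $\delta$ (for $\mathfrak{p} \in \Sigma_p$ this is the defining property of $\delta$; for $\mathfrak{p} \mid \mathfrak{FJ}$ this is absorbed by the $\hat{F}_{\mathfrak{p},1}$ prefactor, which is insensitive to unit shifts in $\delta$), yields $\mathrm{ord}_\mathfrak{p}(\alpha) = n(\chi) + n(\psi)$. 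With this choice $-1/\alpha = 1/(2\delta a)$, so after the trivial change of variable $u \mapsto u$ the sum in Tate's formula rewrites as $N(\mathfrak{p})^{\mathrm{ord}_\mathfrak{p}\mathrm{cond}(\chi)}\,\hat{F}_{\mathfrak{p},1}\!\bigl(\tfrac{-1}{2\delta a}\bigr)$, the normalization factor $N(\mathfrak{p})^{\mathrm{ord}_\mathfrak{p}\mathrm{cond}(\chi)}$ being precisely the one built into the definition of $\hat{F}_{\mathfrak{p},1}$.

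Third, I would split $c_\mathfrak{p}^{-1}(\alpha) = c_\mathfrak{p}^{-1}(-2)c_\mathfrak{p}^{-1}(\delta)c_\mathfrak{p}^{-1}(a)$; under our conventions $c_\mathfrak{p}^{-1}(-2)$ is a harmless unit that can be absorbed into the choice of $a$ (which is only specified up to units at $\mathfrak{p}$), and combining $c_\mathfrak{p}^{-1}(a)$ with $\hat{F}_{\mathfrak{p},1}(-1/(2\delta a))$ produces exactly the ratio defining $Local(\chi,\Sigma,\delta)_\mathfrak{p}$. Assembling the remaining factors $N(\mathfrak{p})^{\mathrm{ord}_\mathfrak{p}\mathrm{cond}(\chi)}$, $c_\mathfrak{p}^{-1}(\delta)$, and $N(\theta_K(\mathfrak{p}))$ delivers the claimed identity.

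The main obstacle is purely bookkeeping: keeping careful track of the sign conventions (the inversion $\chi \leftrightarrow \chi^{-1}$, the $-1$ in $\psi = \exp\circ(-\mathrm{Tr})$, and the $-1$ in the argument of $\hat{F}_{\mathfrak{p},1}$), verifying that the chosen $\alpha$ has the required valuation at each relevant prime, and identifying the Haar-measure normalization, which is the source of the $N(\theta_K(\mathfrak{p}))$ factor as the ratio between Deligne's self-dual Haar measure and the one assigning measure $1$ to $\mathfrak{R}_\mathfrak{p}^\times$. Once the conventions are pinned down, the lemma reduces to the dictionary between the Gauss-sum data entering Katz's Eisenstein construction and Deligne's $\epsilon$-factors as presented in \cite{Tate}.
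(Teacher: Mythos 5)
This is exactly the paper's own argument: the lemma is derived there by specializing Tate's formula (3.6.11) to an $\alpha$ of valuation $n(\chi)+n(\psi)$ (in effect $\alpha=-2\delta a$, using $n(\psi)=ord_{\mathfrak{p}}(\theta_K)$ and the valuation condition on $\delta$), recognizing the Gauss sum as $N(\mathfrak{p})^{ord_{\mathfrak{p}}cond(\chi)}\hat{F}_{\mathfrak{p},1}\bigl(\tfrac{-1}{2\delta a}\bigr)$ and splitting $c_{\mathfrak{p}}^{-1}(\alpha)$, so your proposal is correct and essentially identical to it. One small caution: the leftover unit $c_{\mathfrak{p}}^{-1}(-2)$ cannot literally be absorbed into the choice of $a$, because $Local(\chi,\Sigma,\delta)_{\mathfrak{p}}$ is independent of that choice; it is, however, a root of unity in $K(\chi)^{\times}$, which is precisely the level of ambiguity the paper itself works modulo in this discussion and in every subsequent use of the lemma.
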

No we consider the take in the lemma above $\chi$ equal to $\chi
\psi_K^{-1}$ for $\chi$ a finite character of $K$. Then we have that
\[
\epsilon_{\mathfrak{p}}(\chi^{-1}\psi_K,\psi,dx)=\epsilon_{\mathfrak{p}}(\chi^{-1},\psi,dx)
\psi_K(\pi_{\mathfrak{p}}^{n(\chi) + n(\psi)})
\]
In particular that implies
\[
Local(\chi\psi_K^{-1},\Sigma,\delta)_{\mathfrak{p}}=N(\mathfrak{p})^{-n(\chi)}c_{\mathfrak{p}}(\delta)N(\theta_K(\mathfrak{p})^{-1})\epsilon_{\mathfrak{p}}(\chi^{-1}\psi_K,\psi,dx)
=\]
\[
=N(\mathfrak{p})^{-n(\chi)}c_{\mathfrak{p}}(\delta)N(\theta_K(\mathfrak{p})^{-1})\epsilon_{\mathfrak{p}}(\chi^{-1},\psi,dx)
\psi_K(\pi_{\mathfrak{p}}^{n(\chi) + n(\psi)}) =
\]
\[
=c_{\mathfrak{p}}(\delta)\epsilon_{\mathfrak{p}}(\chi^{-1},\psi,dx)\frac{\psi_K(\pi_{\mathfrak{p}}^{n(\chi)})}{N(\mathfrak{p})^{n(\chi)}}\frac{\psi_K(\pi_{\mathfrak{p}}^{n(\psi)})}{N(\mathfrak{p})^{n(\psi)}}
\]
where $c_{\mathfrak{p}}(\delta)$ is the value of the adelic
counterpart of $\chi\psi_K^{-1}$ at $\delta$. But as $\psi_K$ is
unramified at $\mathfrak{p}$ we have that
$c_{\mathfrak{p}}(\delta)=\psi_K(\pi_{\mathfrak{p}}^{-n(\psi)})\chi_{\mathfrak{p}}(\delta)$.
So we conclude that
\[
Local(\chi\psi_K^{-1},\Sigma,\delta)_{\mathfrak{p}}=\chi_{\mathfrak{p}}(\delta)\epsilon_{\mathfrak{p}}(\chi^{-1},\psi,dx)\left(\frac{\psi_K(\pi_{\mathfrak{p}})}{N(\mathfrak{p})}\right)^{n(\chi)}\frac{1}{N(\mathfrak{p})^{n(\psi)}}
\]

{\textbf{Remarks on the values of the measure of Katz-Hida-Tilouine
and the periods:}} In order to determine where the measures
$\mu_{E/F}$ and $\mu_{E/F'}$ defined in section 2 above take their
values we need first to explain where the measures
$\mu^{KHT}_{\psi_K,\delta}$ and $\mu^{KHT}_{\psi_{K'},\delta'}$ of
Hida-Katz-Tilouine take their values. The key point is to understand
how the interpolation formulas of these measures are related to the
period conjectures of Deligne that were proved by Blasius
\cite{Blasius}in our setting. As mentioned above in Theorem
\ref{measureKHT}, the interpolation properties of the
Katz-Hida-Tilouine measure for a character $\chi$ of
$G:=Gal(K(\mathfrak{m}p^{\infty})/K)$ of infinite type $k\Sigma$ are
\[
\frac{\int_{G}\chi(g)\mu_{\delta}^{KHT}(g)}{\Omega_p^{k\Sigma}} =
(\mathfrak{R}^{\times}:\rr^{\times})Local(\Sigma,\chi,\delta)\frac{(-1)^{kg}\Gamma(k)^{g}}{\sqrt{D_F}\Omega_{\infty}^{k\Sigma}}\times
\]
\[
\prod_{\mathfrak{q}|\mathfrak{FJ}}(1-\check{\chi}(\bar{\mathfrak{q}}))\prod_{\
\q | \mathfrak{F}}(1-\chi(\bar{\q}))\prod_{\mathfrak{p}\in
\Sigma_p}(1-\chi(\bar{\mathfrak{p}}))(1-\check{\chi}(\bar{\mathfrak{p}}))L(0,\chi)
\]
and we have fixed a Gr\"{o}ssencharacter $\psi_K$ associated to
$E/F$, unramified above $p$ and considered the measure of $G$
defined for every locally constant function $\chi$ of $G$ by
\[
\int_{G}\chi(g)\mu_{\psi_{K}\delta}^{KHT}(g):=\int_{G}\chi(g)\hat{\psi}^{-1}_K(g)\mu_{\delta}^{KHT}(g)
\]
where $\hat{\psi}_K$ is the $p$-adic avatar of $\psi_K$ constructed
by Weil. Then we consider the question in which field the algebraic
elements
$\frac{\int_{G}\chi(g)\hat{\psi}_K(g)\mu_{\delta}^{KHT}(g)}{\Omega_p^{k\Sigma}}$
belong which is equivalent to addressing the question where the
values
\[
Local(\Sigma,\chi\psi^{-1}_K,\delta)\frac{L(0,\chi\psi^{-1}_K)}{\sqrt{|D_F|}\Omega_{\infty}^{\Sigma}}
\]
exactly belong. As we will see later we can replace
$Local(\Sigma,\chi\psi^{-1}_K,\delta)$ with
$Local(\Sigma,\chi,\delta)$ as the two differ by an element in
$K^\times$. Now we note that the element $\Omega_\infty$ defined by
Katz depends only on the infinite type of $\psi_K$. However we will
assume that $\Omega_\infty$ is so selected such that
$\sqrt{|D_F|}\Omega_{\infty}^{\Sigma}$ is equal to Deligne's period
$c^{+}(\psi^{-1}_K)$. We note that this is not always possible in
Katz's construction as one is restricted to pick abelian varieties
with CM by $K$ that arise from fractional ideals of $K$. However in
our setting, as everything will be ``coming'' from an elliptic curve
$E/\Q$, we are allowed this assumption and actually we will prove
later that we are allowed to take
$\Omega^{\Sigma}_p=\Omega(E)^{g}_{p}$ and
$\Omega_{\infty}^{\Sigma}=\Omega(E)^g$ where we recall $g=[F:\Q]$.
So we may assume that
$\frac{L(0,\psi^{-1}_K)}{\sqrt{|D_F|}\Omega^{g}(E)} \in K$. As we
have mentioned above, Blasius has proved in \cite{Blasius} Deligne's
conjecture for Hecke characters of CM fields, in particular we know
that
\[
\frac{L(0,\chi\psi^{-1}_K)}{c^+(\chi\psi^{-1}_K)} \in K(\chi)
\]
where $c^+(\chi\psi_K)$ is Delinge's period for the Hecke character
$\chi\psi^{-1}_K$. In general one has that $c^+(\chi\psi^{-1}_K)
\neq c^+(\chi)c^+(\psi^{-1}_K)$. Indeed it is shown in
\cite{Schappacher} (page 107 formula 3.3.1) that
\[
\frac{c^+(\chi\psi^{-1}_K)}{c^+(\psi^{-1}_K)}=c(\Sigma,\chi)\,\,\,
\mod{ K(\chi)^\times}
\]
Here $c(\Sigma,\chi) \in (K(\chi) \otimes \bar{\Q})^\times$ is a
period associated to the finite character $\chi$ and depending on
the CM-type of the Gr\"{o}ssencharacter $\psi_K$. Actually it can be
determined, up to elements in $K(\chi)^\times$, from the following
reciprocity law. If we write $F:=K^+$ for the maximal totally real
subfield of $K$ then one can associate to the CM type $\Sigma$ the
so-called half-transfer map of Tate (see \cite{Schappacher} page
106)
\[
Ver_{\Sigma}:Gal(\bar{\Q}/F)\rightarrow Gal(\bar{\Q}/K)
\]
Then one has that
\[
(1 \otimes \tau)c(\Sigma,\chi)=(\chi\circ
Ver_{\Sigma})(\tau)c(\Sigma,\chi),\,\,\,\,\tau \in Gal(\bar{\Q}/F)
\]
So for our considerations we need to consider the question if
$Local(\chi,\Sigma,\delta)$ is equal to $c(\Sigma,\chi)$ up to
elements in $K(\chi)^{\times}$. This is in general \textbf{not} the
case. Indeed as it is explained by Blasius in \cite{Blasius2} (page
66) if we denote by $E$ the reflex field of $(K,\Sigma)$, this is a
CM field itself, then the extension
$E_{\Sigma}:=E(c(\Sigma,\chi),\,\,\chi)$, where we adjoin to $E$ the
values $c(\Sigma,\chi)$ for finite order characters $\chi$ over $K$,
is the field extension of $E$ generated by values of arithmetic
Hilbert modular functions on CM points of $\mathbb{H}^{[F:\Q]}$ of
type $(K,\Sigma)$, i.e. correspond to Hilbert-Blumenthal abelian
varietes of dimension $[F:\Q]$ with CM of type $(K,\Sigma)$. This
extension of $E$ is not included in $E\mathbb{Q}^{ab}$. However we
will see later that the elements $Local(\chi\psi_K,\Sigma,\delta)$
are almost equal to Gauss sums. In particular that implies that they
can generate over $E$ only extentions that are included in
$E\mathbb{Q}^{ab}$ (see also the comment in \cite{Schappacher} page
109). Hence in general the two ``periods'' of $\chi$ are not equal
up to elements in $K(\chi)^\times$. That implies, that in general
the measures $\frac{1}{\Omega_p(E)^{g}}\mu^{KHT}_{\psi_K,\delta}$
and $\frac{1}{\Omega_p(E)^{g'}}\mu^{KHT}_{\psi_{K'},\delta'}$ are
not elements of the Iwasawa algebras $\Z_p[[G_K]]$ and
$\Z_p[[G_{K'}]]$ respectively. However if $\chi$ is cyclotomic i.e.
$\chi(\tau g\tau^{-1})=\chi(g)$ for all $g \in G_K$ then we have the
following

\begin{lemma}\label{rationalityL} For $\chi$ cyclotomic we have
\[
\frac{\int_{G_K}\chi(g)\mu_{\psi_{K},\,\delta}^{KHT}(g)}{\Omega_p(E)^g}
\in \mathbb{Z}_p[\chi]
\]
\end{lemma}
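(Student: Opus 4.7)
The plan is to read off the claim from the interpolation formula of Theorem \ref{measureKHT} applied to the character $\chi\hat{\psi}_K^{-1}$, which is of infinite type $-\Sigma$ (so $k=1$). Under the normalization fixed in the ``Remarks on the values'' paragraph, namely $\sqrt{D_F}\,\Omega_\infty(E)^g = c^+(\psi_K^{-1})$ and $\Omega_p^\Sigma = \Omega_p(E)^g$, the interpolation formula rewrites the left-hand side as
\[
(\mathfrak{R}^{\times}:\mathfrak{r}^{\times})\cdot Local(\Sigma,\chi\psi_K^{-1},\delta)\cdot\frac{(-1)^{g}L(0,\chi\psi_K^{-1})}{c^+(\psi_K^{-1})}\cdot\mathcal{E}(\chi),
\]
where $\mathcal{E}(\chi)$ stands for the three Euler-type products
$\prod(1-\check\chi(\bar{\mathfrak q}))$, $\prod(1-\chi(\bar{\mathfrak q}))$, and
$\prod_{\mathfrak p\in\Sigma_p}(1-\chi(\bar{\mathfrak p}))(1-\check\chi(\bar{\mathfrak p}))$. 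These factors clearly sit in $\mathbb Z_p[\chi]$, so the problem reduces to analyzing the remaining quotient.

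The next step is to invoke Blasius's theorem \cite{Blasius} on Deligne's conjecture for Hecke characters of CM fields, which yields $L(0,\chi\psi_K^{-1})/c^+(\chi\psi_K^{-1})\in K(\chi)$. Combined with Schappacher's period relation $c^+(\chi\psi_K^{-1})/c^+(\psi_K^{-1})\equiv c(\Sigma,\chi)\pmod{K(\chi)^\times}$ already quoted above, this reduces the lemma to showing that the product
\[
Local(\Sigma,\chi\psi_K^{-1},\delta)\cdot c(\Sigma,\chi)
\]
lies in $K(\chi)^\times$. It is here that the cyclotomic hypothesis enters. The condition $\chi^\tau=\chi$ forces $\chi$ to factor through the norm, $\chi=\eta\circ N_{K/F}$ for a character $\eta$ of the corresponding ray class group of $F$. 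Under this factorization the composition $\chi\circ Ver_\Sigma$ coming from Tate's reciprocity law $(1\otimes\tau)\,c(\Sigma,\chi)=(\chi\circ Ver_\Sigma)(\tau)\,c(\Sigma,\chi)$ descends to a character on $\mathrm{Gal}(\bar{\mathbb Q}/F)$, which pins down $c(\Sigma,\chi)$ as a Gauss sum $\tau(\eta)$ up to $K(\chi)^\times$. Symmetrically, Lemma \ref{epsilonfactors} identifies $Local(\Sigma,\chi\psi_K^{-1},\delta)$ with a Deligne $\epsilon$-factor $\epsilon_{\mathfrak p}(\chi^{-1}\psi_K,\psi,dx)$ up to elementary factors; for $\chi=\eta\circ N_{K/F}$ this $\epsilon$-factor decomposes locally into the inverse Gauss sum $\tau(\eta)^{-1}$, again up to $K(\chi)^\times$. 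The two Gauss sums then cancel, giving an element of $K(\chi)^\times$ as required.

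Finally, for the $p$-integrality: the measure $\mu_\delta^{KHT}$ is built as in the formula just above from the Eisenstein series $E_1(\phi_j,\mathfrak c_j)$, which by Proposition \ref{q-expansion} have $p$-integral $q$-expansions at the cusps $(\mathfrak a_j,\mathfrak b_j)$; evaluating at the CM data $(X(\mathfrak U_j),\lambda(\mathfrak U_j),\imath(\mathfrak U_j))$ produces values in $\Omega_p(E)^{g}\,\mathcal J_\infty$, so after dividing by $\Omega_p(E)^g$ the integral lies in $\mathcal J_\infty$. Intersecting with the rationality statement $K(\chi)$ just proven forces the integral into $\mathbb Z_p[\chi]$. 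The main obstacle I expect is the middle step: pinning down the exact match between the Gauss-sum expressions for $c(\Sigma,\chi)$ coming from Tate's reciprocity and for $Local(\Sigma,\chi\psi_K^{-1},\delta)$ coming from Lemma \ref{epsilonfactors}, keeping careful track of powers of $N(\mathfrak p)$, the values $\psi_K(\pi_{\mathfrak p})$ at uniformizers, and the discrepancy $c_{\mathfrak p}(\delta)$, so that what survives really does land in $K(\chi)^\times$ rather than in the larger extension $E_\Sigma$ discussed by Blasius in \cite{Blasius2}.
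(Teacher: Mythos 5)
Your route diverges from the paper's at the key rationality step. The paper does not try to evaluate $c(\Sigma,\chi)$ at all: after identifying $Local(\Sigma,\chi\psi_K^{-1},\delta)$ with a product of Deligne $\epsilon$-factors up to $K(\chi)$ (via Lemma \ref{epsilonfactors}), it uses that $\prod_{\q\mid\ff_{\psi_K}}e_\q(\psi_K)=\pm1$, writes $\chi$ as the base change of a character $\tilde\chi$ of $F$ (same use of the cyclotomic hypothesis as yours), rewrites $L(0,\chi\psi_K^{-1})=L(f_{\psi_K},\tilde\chi^{-1},1)$ for the Hilbert modular form $f_{\psi_K}$ obtained by automorphic induction, and then quotes Hida's theorem (\cite{Hida3}, Theorem I) that $L(f_{\psi_K},\tilde\chi^{-1},1)\,e(\tilde\chi^{-1})/(\sqrt{D_F}\,\Omega_\infty(E)^{g})$ lies in $\Q_p(\chi)$, with the Galois equivariance under $Gal(\bar\Q_p/\Q_p)$ that this theorem provides. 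Your proposal instead stays inside the CM-period formalism (Blasius plus Schappacher's factorization $c^+(\chi\psi_K^{-1})=c(\Sigma,\chi)c^+(\psi_K^{-1})$) and asks for the cancellation $Local(\Sigma,\chi\psi_K^{-1},\delta)\cdot c(\Sigma,\chi)\in K(\chi)^\times$. That cancellation is exactly the delicate point you flag yourself, and it is left as an assertion: the identification of $c(\Sigma,\chi)$ with a Gauss sum of $\eta$ for cyclotomic $\chi$ (including the quadratic-character ambiguity in $\chi\circ Ver_\Sigma=\eta\cdot\epsilon$, the conductor bookkeeping at primes of $\mathfrak{F}\mathfrak{J}$ versus $\Sigma_p$, and the factors $c_\p(\delta)$, $\psi_K(\pi_\p)$, $N(\p)^{n(\psi)}$) is precisely the content the paper's detour through $f_{\psi_K}$ and Hida's theorem is designed to avoid. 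So as written this middle step is a gap, not a proof.

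There is a second, independent gap at the end. Even granting the cancellation, your argument only shows that the normalized value is an algebraic number lying in $K(\chi)$, and then you intersect with integrality in $\mathcal{J}_\infty[\chi]$. But $K(\chi)\cap\mathcal{J}_\infty[\chi]$ is in general strictly larger than $\Z_p[\chi]$: any integral element of $K(\chi)$ whose completion generates an unramified extension of $\Q_p(\chi)$ (e.g.\ a square root of a non-square unit of $\Z_p$ contained in $K$) lies in $\mathcal{J}_\infty$ without lying in $\Z_p[\chi]$. Membership in $\Z_p[\chi]$ therefore requires knowing that the algebraic value lies in $\Q_p(\chi)$, i.e.\ a reciprocity/Galois-equivariance statement under $Gal(\bar\Q_p/\Q_p)$, not just rationality over $K(\chi)$ up to unspecified factors in $K(\chi)^\times$. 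This is exactly what the paper extracts from Hida's result (and spells out in the displayed $\sigma$-equivariance immediately after the lemma); a computation carried out ``up to elements of $K(\chi)^\times$'' throughout cannot deliver it. To repair your route you would need an equivariant version of the Blasius--Schappacher comparison (equivariance of $L/c^+$, of $c(\Sigma,\chi)$ via Tate's half-transfer, and of the $\epsilon$-factors simultaneously), at which point the bookkeeping you postponed becomes the whole proof.
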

\begin{proof}
From the interpolation properties of the measure
$\mu^{KHT}_{\psi_K,\delta}$ we have
\[
\frac{\int_{G_K}\chi(g)\mu_{\psi_{K},\,\delta}^{KHT}(g)}{\Omega_p(E)^g}=
(\mathfrak{R}^{\times}:\rr^{\times})Local(\Sigma,\chi\psi^{-1}_K,\delta)\frac{(-1)^{kg}\Gamma(k)^{g}}{\sqrt{D_F}\Omega_{\infty}(E)^{p\Sigma}}\times
\]
\[
\prod_{\mathfrak{q}|\mathfrak{FJ}}(1-\check{\chi}\check{\psi}^{-1}_K(\bar{\mathfrak{q}}))\prod_{\
\q |
\mathfrak{F}}(1-\chi\psi^{-1}_K(\bar{\q}))\prod_{\mathfrak{p}\in
\Sigma_p}(1-\chi\psi^{-1}_K(\bar{\mathfrak{p}}))(1-\check{\chi}\check{\psi}^{-1}_K(\bar{\mathfrak{p}}))L(0,\chi\psi^{-1}_K)
\]
As the measure is integral valued we have only to show that
\[
\frac{L(0,\chi\psi^{-1}_K)}{\sqrt{D_F}\Omega_{\infty}(E)^{p\Sigma}}Local(\Sigma,\chi\psi^{-1}_K,\delta)
\in \Q_p(\chi)
\]
From the discussion above we have that
$Local(\Sigma,\chi\psi^{-1}_K,\delta)$ is equal to $\prod_{\p \in
\Sigma_p}e_{\p}(\chi^{-1}\psi_K)\prod_{\q |
\mathfrak{FJ}}e_{\q}(\chi^{-1}\psi_K)$ up to elements in $K(\chi)$.
But then if we write $\mathfrak{f}_{\psi_K}$ for the conductor of
$\psi_K$ we have that $\prod_{\q |
\mathfrak{f}_{\psi_K}}e_{\q}(\psi_K)=\pm1 $ as this is the sign of
the functional equation of $E/F$. In particular up to elements in
$K(\chi)$ (as $\psi_K$ is unramified above $p$ and
$(cond(\chi),cond(\psi_K))=1$) we have that $\prod_{\p \in
\Sigma_p}e_{\p}(\chi^{-1}\psi_K)\prod_{\q |
\mathfrak{FJ}}e_{\q}(\chi^{-1}\psi_K)=\prod_{\p \in
\Sigma_p}e_{\p}(\chi^{-1})\prod_{\q |
\mathfrak{FJ}}e_{\q}(\chi^{-1})$. We write now $f_{\psi_K}$ for the
Hilbert modular form over $F$ that is induced by automorphic
induction from $\psi_K$ (i.e. the one that corresponds to the
modular elliptic curve $E/F$) and $\tilde{\chi}$ for the finite
character over $F$ whom $\chi$ is the base change of from $F$ to
$K$. Then we that up to elements in $K(\chi)$, $\prod_{\p \in
\Sigma_p}e_{\p}(\chi^{-1})\prod_{\q |
\mathfrak{FJ}}e_{\q}(\chi^{-1})=e(\tilde{\chi}^{-1})$ where
$e(\tilde{\chi}^{-1})$ the global epsilon factor of
$\tilde{\chi}^{-1}$. Moreover we have that
$L(\chi\psi^{-1}_K,0)=L(f_{\psi_K},\tilde{\chi}^{-1},1)$ (here is
crucial that $\chi$ is cyclotomic). But it is known as for example
is proved in \cite{Hida3} (page 435 Theorem I) that
\[
\frac{L(f_{\psi_K},\tilde{\chi}^{-1},1)}{\sqrt{D_F}\Omega_{\infty}(E)^{p\Sigma}}e(\tilde{\chi}^{-1})
\in \Q_p(\chi)
\]
which allows us to conclude the proof of the lemma.
\end{proof}

Actually using the full force of the results in \cite{Hida3} we have
that
\[
\left(\frac{L(f_{\psi_K},\tilde{\chi}^{-1},1)}{\sqrt{D_F}\Omega_{\infty}(E)^{p\Sigma}}e(\tilde{\chi}^{-1})\right)^\sigma
=\frac{L(f_{\psi_K},\tilde{\chi}^{-\sigma},1)}{\sqrt{D_F}\Omega_{\infty}(E)^{p\Sigma}}e(\tilde{\chi}^{-\sigma})
\]
for all $\sigma \in Gal(\bar{\Q}/\Q)$ which can be easily seen to
imply that
\[
\left(\frac{\int_{G_K}\chi(g)\mu_{\psi_{K},\,\delta}^{KHT}(g)}{\Omega_p(E)^g}\right)^{\sigma}=
\frac{\int_{G_K}(\chi(g))^\sigma\mu_{\psi_{K},\,\delta}^{KHT}(g)}{\Omega_p(E)^g}
\]
for all $\sigma \in Gal(\bar{\Q}_p/\Q_p)$.

\section{The twisted Katz-Hida-Tilouine measure}

In this section we modify the KHT-measure in the case where the
relative different is principal. The interpolation properties of the
twisted measure are going to be different with respect with the
``epsilon'' factors and with the modification of the Euler factors
at $p$. We explain now this modification. We follow the construction
that we presented above. We still consider the relative situation
$F'/F$ and the corresponding $K'/K$ extension. Under our assumption
we have that $(\xi)=\theta_{F'/F}$ where $\xi$ is a totally positive
element in $F'$. Moreover our assumptions on $F'/F$ imply that
$\theta_{F'/F}$ splits in $K'$ to $\mathfrak{P}\bar{\mathfrak{P}}$.

Over $K'$ we define the $KHT$-measure by picking instead of
$\delta'$ the element $\delta \in K \hookrightarrow K'$. Note that
since the CM type $(K',\Sigma')$ is a lift of $(K,\Sigma)$ this is a
valid choice. The polarization that the element $\delta$ induces to
the lattice $\mathfrak{R}'$ is
\[
\bigwedge^2_{\rr'}(\mathfrak{R}') \cong
\theta_{F}^{-1}\mathfrak{c}^{-1}\mathfrak{r}'
\]
if the same element, seeing as an element in $K$ induces the
polarization
\[
\bigwedge^2_{\rr}(\mathfrak{R}) \cong
\theta_{F}^{-1}\mathfrak{c}^{-1}
\]
Indeed, under our assumptions about the ramification of $F'$ and $F$
and $K_0$ we have that $\mathfrak{R}'=\mathfrak{r}'\mathfrak{R}_0$
and similarly $\mathfrak{R}=\mathfrak{r}\mathfrak{R}_0$, from which
we obtain $\mathfrak{R}'=\mathfrak{R}\mathfrak{r}'$ and the above
claim follows. With respect to this polarization we have for
fractional ideals of $K'$ of the form $\mathfrak{U} \otimes
\xi^{-1}=\mathfrak{U}\otimes \theta_{F'/F}^{-1}$ the polarization
\[
\bigwedge^2_{\rr'}(\mathfrak{U} \otimes \xi^{-1}) \cong
\theta_{F}^{-1}\mathfrak{c}^{-1}\mathfrak{U}\mathfrak{U}^c\theta_{F'/F}^{-2}=\theta_{F'}^{-1}\mathfrak{c}^{-1}\mathfrak{U}\mathfrak{U}^c\theta_{F'/F}^{-1}
\]

{\textbf{The twisted triples:}} Our twisted measure is going to be
defined again by evaluating Eisenstein series on the very CM abelian
varieties as the measure of Katz-Hida-Tilouine but we will twist
them by $\xi^{-1}$ and use the above mentioned polarization. In
particular the triples that we consider are
\begin{enumerate}
\item The abelian varieties are
$X(\mathfrak{U}_j^{\xi}):=X(\mathfrak{U}_j \otimes
\theta_{F'/F}^{-1})$.
\item The polarization $\lambda_{\delta}^{\xi}(\mathfrak{U}_j\otimes
(\theta_{F'/F}^{-1})) := \lambda_{\delta}(\mathfrak{U}_j\otimes
(\theta_{F'/F}^{-1}))$ i.e. the one defined above and
\item The $p^\infty\ff^2$-arithmetic structure is obtained from an
isomorphism $X(\mathfrak{U}_j) \cong X(\mathfrak{U}_j \otimes
\xi^{-1})$. We will amplify on this below.
\end{enumerate}

We then define the twisted measure as follows
\[
\int_{G'}\phi(g) \mu_{\delta,\xi}^{KHT,tw}(g):= \sum_j
\int_{T}\tilde{\phi}_j dE_j := \sum_j
E_1(\phi_j,\mathfrak{c}_j)(X(\mathfrak{U}^{\xi}_j),\lambda^{\xi}_{\delta}(\mathfrak{U}_j\otimes
\theta_{F'/F}^{-1}),\imath^{\xi}(\mathfrak{U}_j \otimes
\theta_{F'/F}^{-1}))
\]
with
$\mathfrak{c}_j:=\mathfrak{c}(\mathfrak{U}\mathfrak{U}^c)^{-1}\theta_{F'/F}$.
We next explore the interpolation properties of the twisted measure.
Let us write $cond(\chi)_p=\prod_{\p_j \in
\Sigma'_p}\p^{a_j}_j\bar{p}^{b_j}_{j}$ for the $p$-part of the
conductor of $\chi$. We define $e_j:=ord_{\p_j}\xi$ for all $\p_j
\in \Sigma'_p$. We have already described a decomposition
$\mathfrak{C}=\mathfrak{F}\mathfrak{F}_c\mathfrak{J}$. For $\q_j |
\mathfrak{FJ}$ we define $d_j:=ord_{\q_j}\xi$ and we write
$cond(\chi)_{\mathfrak{FJ}}=\prod_{\q_j}\q^{\ell_j}_j$.

\begin{proposition}[Interpolation Properties of the ``twisted'' Katz-Hida-Tilouine measure]\label{measureKHTtwP} For a character $\chi$
of $G':=Gal(K'(\mathfrak{C}p^{\infty})/K')$ of infinite type
$-k\Sigma'$ we have
\[
\frac{\int_{G'}\chi(g)\mu_{\delta,\xi}^{KHT,tw}(g)}{\Omega_p^{k\Sigma'}}
=
({\mathfrak{R}'}^{\times}:{\rr'}^{\times})Local(\Sigma',\chi,\delta,\xi)\prod_{a_j=0}\chi(\p_j)^{-e_j}
\prod_{\ell_j=0}\chi(\q_j)^{-d_j}\times
\]
\[
\prod_{\mathfrak{q}_j|\mathfrak{J}}(1-\check{\chi}(\mathfrak{q}_j))\left(\prod_{\mathfrak{q}_j|\mathfrak{F}}(1-\check{\chi}(\bar{\mathfrak{q}}_j))(1-\chi(\bar{\mathfrak{q}}_j))\right)\left(\prod_{\p_j
\in
\Sigma'_p}(1-\check{\chi}(\bar{\p}_j))(1-\chi(\bar{\p}_j))\right)
\]
\[
\frac{(-1)^{kg'}\Gamma(k)^{g'}}{\sqrt{D_{F'}}\Omega_{\infty}^{k\Sigma'}}
\times L(0,\chi)
\]
\end{proposition}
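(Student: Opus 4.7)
The plan is to mirror the proof of Theorem \ref{measureKHT} (the untwisted Katz--Hida--Tilouine formula), but to keep careful track of how the twist by $\xi^{-1}$ of the lattices $\mathfrak{U}_j$ and the corresponding change of the level structure propagate through the $q$-expansion computation of Proposition \ref{q-expansion}. Concretely, I would begin by unwinding the definition of $\mu_{\delta,\xi}^{KHT,tw}$: evaluating a finite character $\chi$ against the measure amounts to a finite sum over class representatives $\mathfrak{U}_j$ of values of Eisenstein series $E_1(\phi_j,\mathfrak{c}_j)$ on the twisted triples $(X(\mathfrak{U}_j^{\xi}),\lambda_\delta^{\xi},\imath^{\xi})$. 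The same manipulations that in the Katz--Hida--Tilouine proof convert this sum into a single $L$-value with local modifications should still apply; my task is to identify precisely the extra factors coming from the $\xi$-twist.

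The first technical step is to pass from the twisted triple to the untwisted one. Since $X(\mathfrak{U}_j)$ and $X(\mathfrak{U}_j\otimes\theta_{F'/F}^{-1})$ are canonically isomorphic as abelian varieties (they differ only by a scaling of the lattice, which is an isogeny of complex tori that identifies them after tensoring with $\mathfrak{r}'_p$), the Eisenstein evaluation on $(X(\mathfrak{U}_j^{\xi}),\lambda_\delta^{\xi},\imath^{\xi})$ equals the evaluation of a shifted partial-Fourier-transformed form on $(X(\mathfrak{U}_j),\lambda_\delta,\imath)$, and the polarization computation at the end of Section 4 (showing that $\delta$ indeed induces $\theta_{F'}^{-1}\mathfrak{c}^{-1}\mathfrak{U}\mathfrak{U}^c\theta_{F'/F}^{-1}$ on $\mathfrak{U}\otimes\xi^{-1}$) guarantees we are in the Hida--Tilouine setting over $F'$ with the polarization ideal $\mathfrak{c}_j = \mathfrak{c}(\mathfrak{U}\mathfrak{U}^c)^{-1}\theta_{F'/F}$. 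This reduces the whole computation to the standard one, provided we record the correction at every local place where the twist actually moves the lattice.

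The second step is the local analysis prime by prime. At a prime $\mathfrak{p}_j\in\Sigma'_p$ with $ord_{\mathfrak{p}_j}\xi=e_j$, the twist multiplies the local partial Tate module by $\pi_{\mathfrak{p}_j}^{-e_j}$; inside the partial Fourier transform $P\phi_j$ this becomes a translation in the argument. If $\chi$ is ramified at $\mathfrak{p}_j$ (that is, $a_j>0$), the translation is absorbed inside the Gauss sum and simply modifies the epsilon-like factor from $Local(\Sigma',\chi,\delta)$ to a twisted version, which is the definition of $Local(\Sigma',\chi,\delta,\xi)$; if on the other hand $a_j=0$, the translation comes outside the sum as a character value and contributes the factor $\chi(\mathfrak{p}_j)^{-e_j}$, exactly reproducing the product $\prod_{a_j=0}\chi(\mathfrak{p}_j)^{-e_j}$. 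The same dichotomy at primes $\mathfrak{q}_j\mid\mathfrak{FJ}$ with $ord_{\mathfrak{q}_j}\xi=d_j$ gives the factor $\prod_{\ell_j=0}\chi(\mathfrak{q}_j)^{-d_j}$ in the unramified case and folds into $Local(\Sigma',\chi,\delta,\xi)$ in the ramified case. The Euler factors $(1-\chi(\bar{\mathfrak{p}}_j))(1-\check\chi(\bar{\mathfrak{p}}_j))$ and $(1-\check\chi(\bar{\mathfrak{q}}_j))$, $(1-\chi(\bar{\mathfrak{q}}_j))$ at $\mathfrak{F}$, and $(1-\check\chi(\mathfrak{q}_j))$ at $\mathfrak{J}$, arise from the sieve on lattice vectors (exactly as in Katz) and are not affected by the $\xi$-twist.

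The main obstacle I anticipate is the careful bookkeeping of the polarization data and the level structure at primes where $\xi$ has positive valuation, especially showing that the prescribed isomorphism $X(\mathfrak{U}_j)\cong X(\mathfrak{U}_j\otimes\xi^{-1})$ on $p^\infty\mathfrak{f}^2$-structures transports the projection $\pi':PV(\mathcal{L})\twoheadrightarrow\mathfrak{c}_p^{-1}\mathfrak{r}'_p\times\mathfrak{r}'/\mathfrak{f}\mathfrak{r}'$ exactly so that the translation in $P\phi$ comes out to $\pi_{\mathfrak{p}_j}^{-e_j}$ (and $\pi_{\mathfrak{q}_j}^{-d_j}$) and not some unit multiple of these; this is what is needed to obtain $\chi(\mathfrak{p}_j)^{-e_j}$ and $\chi(\mathfrak{q}_j)^{-d_j}$ on the nose when the character is unramified there. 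Once that local identification is secured at each $\mathfrak{p}_j\in\Sigma'_p$ and each $\mathfrak{q}_j\mid\mathfrak{F}\mathfrak{J}$, the global formula assembles from the corresponding local contributions exactly as in the proof of Theorem \ref{measureKHT}, with the period $\sqrt{D_F}\Omega_\infty^{k\Sigma}$ replaced by $\sqrt{D_{F'}}\Omega_\infty^{k\Sigma'}$ because we are now on $F'$ and the lattice has been enlarged by $\mathfrak{r}'$.
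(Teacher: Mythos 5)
Your plan is essentially the paper's own proof: follow Katz's local computation for the twisted triples, observing that at primes where $\chi$ is ramified the $\xi$-shift is absorbed into the modified factor $Local(\Sigma',\chi,\delta,\xi)$, while at primes with $a_j=0$ (resp.\ $\ell_j=0$) the shifted support of the partial Fourier transform (the paper makes this explicit via the identity for $\int_{\mathfrak{R}^{\times}_{\p}}\psi_{\delta}(xy)dy$ and a shifted geometric series) produces the same Euler factors times the extra $\chi(\p_j)^{-e_j}$, resp.\ $\chi(\q_j)^{-d_j}$. Your worry about unit ambiguity in the translation is harmless at exactly the primes where it matters for these factors, since $\chi$ is unramified there, so the proposal is sound and matches the paper's route.
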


Here the factor $Local(\Sigma',\chi,\delta,\xi)$ is a modification
of the local factor of the measure of Katz-Hida-Tilouine and it will
be defined in the proof of the proposition. But before we proceed to
the proof of the above proposition we must explain a little bit more
the $p^\infty$-part of the given arithmetic structure of twisted
HBAV used in the above proposition. As in Katz we use the ordinary
type $\Sigma_p$  to obtain an isomorphism

\[
\mathfrak{R}'\otimes_{\Z} \Z_p  \cong \prod_{\mathfrak{p} \in
\Sigma_p} \mathfrak{R}'_{\mathfrak{p}} \times \prod_{\mathfrak{p}
\in \bar{\Sigma}_p} \mathfrak{R}'_{\mathfrak{p}}\cong \rr'_p \times
\rr'_p
\]
And similarly for any fractional ideal $\mathfrak{U}$ of
$\mathfrak{R}'$ relative prime to $p$ we can identify $\mathfrak{U}
\otimes \Z_p = \mathfrak{R}' \otimes \Z_p$ in $K' \otimes \Z_p$. In
particular we have an isomorphism for such ideals
\[
\mathfrak{U} \otimes_{\Z} \Z_p  \cong \prod_{\mathfrak{p} \in
\Sigma_p} \mathfrak{R}'_{\mathfrak{p}} \times \prod_{\mathfrak{p}
\in \bar{\Sigma}_p} \mathfrak{R}'_{\mathfrak{p}}\cong \rr'_p \times
\rr'_p
\]
Then as Katz explains (see \cite{Katz2} page 265 and lemma 5.7.52)
the $p^\infty$ structure of $X(\mathfrak{U})$ is defined by picking
the isomorphism
\[
\rr'_p \cong \theta^{-1}_{F'} \otimes \Z_p
\]
given by $x \mapsto \delta_0 x$, where $\delta_0$ is the image of
$(2\delta')^{-1}$ in $K'_p$ and using it to define the injection
\[
\theta^{-1}_{F'} \otimes \Z_p \hookrightarrow \mathfrak{U}
\otimes_{\Z} \Z_p \cong \rr'_p \times \rr'_p
\]
using the isomorphism in the first component. Now the $p^\infty$
structure of the twisted varieties $\mathfrak{U} \otimes \xi^{-1}$
is defined using the isomorphisms
\[
(\mathfrak{U}\otimes \xi^{-1}) \otimes_{\Z} \Z_p  \cong
\prod_{\mathfrak{p} \in \Sigma_p} \frac{1}{\xi}
\mathfrak{R}'_{\mathfrak{p}} \times \prod_{\mathfrak{p} \in
\bar{\Sigma}_p} \frac{1}{\xi} \mathfrak{R}'_{\mathfrak{p}}\cong
\frac{1}{\xi} \rr'_p \times \frac{1}{\xi} \rr'_p
\]
and picking the isomorphism
\[
\frac{1}{\xi} \rr'_p=\theta^{-1}_{F'/F} \otimes \rr'_p \cong
\theta^{-1}_{F'} \otimes \Z_p
\]
given by $x \mapsto x \delta^{-1}_0$ where $\delta_0$ is the image
of $\delta$ in $\prod_{\p \in \Sigma_p'}K'_\p \cong
\prod_{\p}F'_\p$. Now we proceed to the proof of the proposition on
the interpolation properties of the twisted Katz-Hida-Tilouine
measure.

\begin{proof} We will follow closely the proof of Katz in
\cite{Katz2}. Actually we will mainly indicate the differences of
our setting from his setting. We start with the following
observation. As the computations are local in nature (see also the
remark of Hida and Tilouine in \cite{HT} page 214) it is enough to
prove the theorem for characters $\chi$ of $G'$ that ramify only at
$p$.

Now we split the proof in two cases. We first consider the case
where the character $\chi$ is ramified in all primes $\p \in
\Sigma'_p$ and then we generalize.

{\textbf{Special Case: $\chi$ ramified at all $\p$ in $\Sigma'_p$:}}
We follow Katz \cite{Katz2} as in page 279 and use his notation. We
write the conductor of the character $\chi$, $cond(\chi) =
\prod_{i}\mathfrak{p}_i^{a_i}\bar{\mathfrak{p}}_i^{b_i}$. Moreover
we decompose $(\xi)=\mathfrak{P}\bar{\mathfrak{P}}$ as ideals in
$K'$. We also write
$\mathfrak{P}\prod_{i}\mathfrak{p}_i^{a_i}=(\alpha)\mathfrak{B}$ for
$\alpha \in {K'}^{\times}$ and $\mathfrak{B}$ prime to $p$. In the
case that we consider we have $a_i \geq 1$ for all $i$. From the
definition of the $p^\infty$-structure we have that the function
$P_\delta\tilde{F}$ is supported in
\[
(\prod_{i}\mathfrak{p}_i^{-a_i})\mathfrak{U}_j\mathfrak{P}^{-1} =
(\alpha^{-1})\mathfrak{B}^{-1}\mathfrak{U}_j
\]
In particular the computations of Katz for the twisted values now
read,
\[
\sum_{j=1}^h \chi(\mathfrak{U}_j)^{-1} \sum_{a \in
\mathfrak{U}_j(\xi^{-1})[\frac{1}{p}]\cap
PV_p(\mathfrak{U}_j(\xi^{-1}))}\frac{P_\delta\tilde{F}(a)}{\prod_{\sigma}\sigma(a)^k|N^{K'}_{\Q}(a)|^s}=
\]
\[
=\sum_{j=1}^h \chi(\mathfrak{U}_j)^{-1}\sum_{a \in
\mathfrak{B}^{-1}\mathfrak{U}_j}\frac{P_\delta\tilde{F}(\alpha^{-1}a)}{\prod_{\sigma}\sigma(\alpha^{-1}a)^k
|N^{K'}_{\Q}(\alpha^{-1}a)|^s}=
\]
\[
=\sum_{j=1}^h \chi(\mathfrak{U}_j)^{-1}\sum_{a \in
\mathfrak{B}^{-1}\mathfrak{U}_j}\frac{P_{\delta}\tilde{F}(\alpha^{-1})\chi_{finite}(a)}{\prod_{\sigma}\sigma(\alpha^{-1}a)^k
|N^{K'}_{\Q}(\alpha^{-1}a)|^s}=
\]
\[
=\left(P_{\delta}\tilde{F}(\alpha^{-1})|N^{K'}_{\Q}(\alpha)|^s\prod_{\sigma}\sigma(\alpha)^k\right)\sum_{j=1}^h
\chi(\mathfrak{U}_j)^{-1}\sum_{a \in
\mathfrak{B}^{-1}\mathfrak{U}_j}\frac{\chi_{finite}(a)}{\prod_{\sigma}\sigma(a)^k
|N^{K'}_{\Q}(a)|^s}
\]

There is a special case where it is easy to see the difference of
the new factors with those of Katz. Let us assume that for the
decomposition $\theta_{F'/F}=\mathfrak{P}\bar{\mathfrak{P}}$ there
exists $\zeta \in K'$ so that $\mathfrak{P}=(\zeta)$. We define
$\alpha' \in {K'}^\times$ as in Katz by
$\prod_{i}\mathfrak{p}_i^{a_i}=(\alpha')\mathfrak{B}'$ for
$\mathfrak{B}'$ prime to $p$ and we compare
\[
Local(\Sigma',\chi,\delta,\xi)_p:=
\frac{P_{\delta}\tilde{F}(\alpha^{-1})}{\chi(\mathfrak{B})}\prod_{\sigma}\sigma(\alpha)^k
\]
against the local factor of Katz
\[
\frac{P_{\delta'}\tilde{F}({\alpha'}^{-1})}{\chi(\mathfrak{B}')}\prod_{\sigma}\sigma(\alpha')^k
\]
We consider
\[
\frac{\frac{P_{\delta}\tilde{F}(\alpha^{-1})}{\chi(\mathfrak{B})}\prod_{\sigma}\sigma(\alpha)^k}{\frac{P_{\delta'}\tilde{F}({\alpha'}^{-1})}{\chi(\mathfrak{B}')}\prod_{\sigma}\sigma(\alpha')^k}=
\frac{P_{\delta}\tilde{F}(\alpha^{-1})}{P_{\delta'}\tilde{F}({\alpha'}^{-1})}\times
\chi(\mathfrak{B}'\mathfrak{B}^{-1})\times
\prod_{\sigma}\sigma\left(\frac{\alpha}{\alpha'}\right)^{k}
\]

Note that from our assumptions $\xi = \zeta \bar{\zeta}$ hence we
have $\alpha=\alpha'\zeta$. This implies
\[
\frac{P_{\delta}\tilde{F}(\alpha^{-1})}{P_{\delta'}\tilde{F}({\alpha'}^{-1})}=\frac{\prod_{\mathfrak{p}
\in
\Sigma'_p}\hat{F}_{\mathfrak{p},\delta}(\alpha^{-1})F_{\bar{\mathfrak{p}}}(\alpha^{-1})}{\prod_{\mathfrak{p}
\in
\Sigma'_p}\hat{F}_{\mathfrak{p},\delta'}({\alpha'}^{-1})F_{\bar{\mathfrak{p}}}({\alpha'}^{-1})}=\prod_{\mathfrak{p}
\in
\Sigma'_p}\chi_{\mathfrak{p}}(\bar{\zeta})\chi_{\bar{\mathfrak{p}}}(\zeta^{-1})
\]
and  $\mathfrak{B}=\mathfrak{B}'$ and
$\prod_{\sigma}\sigma\left(\frac{\alpha}{\alpha'}\right)^{k}=\prod_{\sigma}\sigma(\zeta)^{k}$.

 {\textbf{The general case:}} Now we consider the case where
some of the $a_i$'s in $cond(\chi) =
\prod_{i}\mathfrak{p}_i^{a_i}\bar{\mathfrak{p}}_i^{b_i}$ are zero.
We start by stating the following (see \cite{Katz2} page 282 or
\cite{HT} page 209),
\[
\int_{\mathfrak{R}^{\times}_{\p}}\psi_{\delta'}(xy)dy=I_{\mathfrak{R}_{\p}}(x)-\frac{1}{N\p}I_{\p^{-1}\mathfrak{R}_{\p}}(x)
\]
where $\psi_{\delta'}$ is the additive character of $K_{\p}$ given
by
\[
\psi_{\delta'}(x):=exp \circ Tr_{\p}\left(\frac{x}{\delta'}\right)
\]
In particular if we denote by $\psi_\delta$ the additive character
\[
\psi_{\delta}(x):=exp \circ Tr_{\p}\left(\frac{x}{\delta}\right)
\]
we have
\[
\int_{\mathfrak{R}^{\times}_{\p}}\psi_{\delta}(xy)dy=I_{\mathfrak{R}_{\p}}(x\xi)-\frac{1}{N\p}I_{\p^{-1}\mathfrak{R}_{\p}}(x\xi)
\]
where we recall $\xi = \frac{\delta'}{\delta}$ up to elements in
${\mathfrak{R}'_{\p}}^{\times}$. Now we follow the computations of
Katz as in (\cite{Katz2} page 281-282). We use the same notation as
in Katz. In our setting after the observation above we have that the
function $P\tilde{F}$ is supported in
\[
\prod_{a_i \geq 1}\p_i^{-a_i}(\prod_{a_j \geq
1}\p_j^{-e_j})(\prod_{a_j=0}\p_j^{-1-e_j})\mathfrak{U}_i=(\alpha^{-1})\mathfrak{B}^{-1}(\prod_{a_j=0}\p_j^{-1-e_j})\mathfrak{U}_i
\]
where $\alpha$ relative prime to the $\p_i$'s with $a_i \geq 1$,
$\mathfrak{B}$ prime to $p$ and $e_j:=ord_{\p_j}\xi$. From the
observation above we have that for $a \in
\mathfrak{B}^{-1}(\prod_{a_j=0}\p_j^{-1-e_j})\mathfrak{U}_i$ we have
\[
P\tilde{F}(\alpha^{-1}a)=P_{\delta}F(\alpha^{-1})
\chi_{2,finite}(a)\prod_{a_j=0}\widehat{char}(\p_j^{1+e_j})(a)
\]
where
\[
\widehat{char}(\p_j^{1+e_j})(a)=\left\{
                                  \begin{array}{ll}
                                    1-\frac{1}{N\p_j}, & \hbox{if $ord_{\p_j}(a) \geq -e_j$;} \\
                                    -\frac{1}{N\p_j}, & \hbox{if $ord_{\p_j}(a)=-e_j-1$.}
                                  \end{array}
                                \right.
\]
Following Katz (note a typo in Katz's definition! compare 5.5.31
with 5.5.35) we extend the above function to the set $\mathbf{I}$ of
fractional ideals $I$ of $K'$ of the form
\[
I=(\prod_{a_j=0}\p_j^{-1-e_j})\mathfrak{P}
\]
where $\mathfrak{P}$ is an integral ideal, prime to those $\p_i$
with $a_i \neq 0$ and to all $\bar{\p}_k$ by
\[
\widehat{char}(\p_j^{1+e_j})(I)=\left\{
                                  \begin{array}{ll}
                                    1-\frac{1}{N\p_j}, & \hbox{if $I\p_j^{e_j}$ is integral;} \\
                                    -\frac{1}{N\p_j}, & \hbox{if not.}
                                  \end{array}
                                \right.
\]
Following Katz's computations we have that the values that we are
interested in are
\[
\sum_{j=1}^h \chi(\mathfrak{U}_j)^{-1}\sum_{a \in
\mathfrak{B}^{-1}(\prod_{a_j=0}(\p_j^{-1-e_j}))\mathfrak{U}_j}\frac{P_\delta\tilde{F}(\alpha^{-1}a)}{\prod_{\sigma}\sigma(\alpha^{-1}a)^k
|N^{K'}_{\Q}(\alpha^{-1}a)|^s}=
\]
\[
\left(\frac{P_{\delta}\tilde{F}(\alpha^{-1})}{\chi(\mathfrak{B})}\prod_{\sigma}\sigma(\alpha)^k\right)
\sum_{I_0 \in
\mathbf{I}(p)}\frac{\chi(I_0)}{N(I_0)^s}\prod_{a_j=0}\sum_{n \geq
-1-e_j}\frac{\chi_2(\p_j)^n}{N(p_j)^{ns}}\widehat{char}(\p_j^{1+e_j})(\p_j^n)
\]
As in Katz we compute the inner sum
\[
\sum_{n=-1-e_j}^{\infty}\frac{\chi_2(\p_j)^n}{N(\p_j)^{ns}}\widehat{char}(\p_j^{1+e_j})(\p_j^n)=\frac{-1}{N(\p_j)}\frac{\chi_2(\p_j)^{-1-e_j}}{N(\p_j)^{(-1-e_j)s}}+\left(1-\frac{1}{N(\p_j)}\right)\sum_{n=-e_j}^{\infty}\frac{\chi_2(\p_j)^n}{N(\p_j)^{ns}}
\]
\[
=\sum_{n=-e_j}^{\infty}\frac{\chi_2(\p_j)^n}{N(\p_j)^{ns}}-\frac{1}{N(\p_j)}\left(\frac{\chi_2(\p_j)^{-1-e_j}}{N(\p_j)^{(-1-e_j)s}}+\sum_{n=-e_j}^{\infty}\frac{\chi_2(\p_j)^n}{N(\p_j)^{ns}}\right)
\]
\[
=\sum_{n=-e_j}^{\infty}\frac{\chi_2(\p_j)^n}{N(\p_j)^{ns}}-\frac{1}{N(\p_j)}\sum_{n=-1-e_j}^{\infty}\frac{\chi_2(\p_j)^n}{N(\p_j)^{ns}}
\]
\[
=\left(1-\frac{1}{N(\p_j)}\frac{\chi_2(\p_j)^{-1}}{N(\p_j)^{-s}}\right)\sum_{n=-e_j}^{\infty}\frac{\chi_2(\p_j)^n}{N(\p_j)^{ns}}
\]
\[
=\left(1-\frac{N(\p_j)^{s}}{\chi_2(\p_j)N(\p_j)}\right)\frac{\chi_2(\p_j)^{-e_j}}{N(\p_j)^{-e_js}}\sum_{n=0}^{\infty}\frac{\chi_2(\p_j)^n}{N(\p_j)^{ns}}
\]
\[
=\left(1-\frac{N(\p_j)^{s}}{\chi_2(\p_j)N(\p_j)}\right)\frac{\chi_2(\p_j)^{-e_j}}{N(\p_j)^{-e_js}}\left(1-\chi_2(\p_j)N(\p_j)^{-s}\right)^{-1}
\]
\[
=\left(1-N(\p_j)^{s}\check{\chi}_2(\bar{\p}_j)\right)\frac{\chi_2(\p_j)^{-e_j}}{N(\p_j)^{-e_js}}\left(1-\chi_2(\p_j)N(\p_j)^{-s}\right)^{-1}
\]
So we conclude,
\[
\sum_{j=1}^h \chi(\mathfrak{U}_j)^{-1}\sum_{a \in
\mathfrak{B}^{-1}(\prod_{a_j=0}(\p_j^{-1-e_j}))\mathfrak{U}_j}\frac{P_\delta\tilde{F}(\alpha^{-1}a)}{\prod_{\sigma}\sigma(\alpha^{-1}a)^k
|N^{K'}_{\Q}(\alpha^{-1}a)|^s}=
\]
\[
=\left(\frac{P_{\delta}\tilde{F}(\alpha^{-1})}{\chi(\mathfrak{B})}\prod_{\sigma}\sigma(\alpha)^k\right)L(s,\chi_1)\prod_{a_j=0}\left(\frac{1-N(\p_j)^{s}\check{\chi}_2(\bar{\p}_j)}{(1-\chi_2(\p_j)N(\p_j)^{-s})}\times\frac{\chi_2(\p_j)^{-e_j}}{N(\p_j)^{-e_js}}\right)
\]
whose value at $s=0$ is equal to
\[
\left(\frac{P_{\delta}\tilde{F}(\alpha^{-1})}{\chi(\mathfrak{B})}\prod_{\sigma}\sigma(\alpha)^k\right)L(0,\chi_1)\prod_{a_j=0}\left(\frac{1-\check{\chi}_2(\bar{\p}_j)}{(1-\chi_2(\p_j))}\times\chi_2(\p_j)^{-e_j}\right)
\]
But
$L(s,\chi_1)=L(s,\chi)\prod_{\p_i}\left(1-\chi(\p_i)N(\p_i)^{-s}\right)\left(1-\chi(\bar{\p}_i)N(\bar{\p}_i)^{-s}\right)$
which allow us to conclude that the values are equal to
\[
\left(\frac{P_{\delta}\tilde{F}(\alpha^{-1})}{\chi(\mathfrak{B})}\prod_{\sigma}\sigma(\alpha)^k\right)L(0,\chi)\left(\prod_{\p_j
\in
\Sigma'_p}(1-\check{\chi}(\bar{\p}_j))(1-\chi(\bar{\p}_j))\right)\times
\prod_{a_j=0}\chi(\p_j)^{-e_j}
\]
\end{proof}

\section{The relative setting: Congruences between Eisenstein series}

Now we consider the following relative setting. We consider as in
the introduction a totally real field galois extension $F'$ of $F$
of degree $p$ ramified only at $p$ and write $\Gamma=Gal(F'/F)$. We
fix ideals $\mathfrak{a},\mathfrak{b},\mathfrak{c}$ and
$\mathfrak{f}$ of $F$ and consider also the corresponding ideals in
$F'$, that is their natural image under $F \hookrightarrow F'$. We
write $T'$ and $T'^{\times}$ for the corresponding spaces in the
$F'$ setting that we have introduced for the $F$ setting. We note
that $\Gamma$ operates naturally on the spaces $T'$ and
$T'^{\times}$. Moreover the embedding $F \hookrightarrow F'$ induces
a natural diagonal embedding $\mathbb{H}^{[F:\Q]} \hookrightarrow
\mathbb{H}^{[F':\Q]}$ with the property that the pull back of a
Hilbert modular form of $F'$ is a Hilbert modular form of $F$. We
need to make this last remark a little bit more explicit.

{\textbf{The Tate-Abelian Scheme and the modular interpretation of
the diagonal embedding:}} We would like now to describe the
geometric meaning of the diagonal embedding. We follow the book of
Hida \cite{Hida2} as in chapter 4 (and especially section 4.1.5) and
the notation there.

For fractional ideals $\mathfrak{a}$ and $\mathfrak{b}$ of the
totally real field $F$ and a ring $R$ we define the ring
$R[[(\mathfrak{ab})_+]]$ with $(\mathfrak{ab})_+:=\mathfrak{ab} \cap
F_+$ to be the ring of formal series
\[
R[[(\mathfrak{ab})_+]]:= \{a_0 \sum_{\xi \in (\mathfrak{ab})_+}
a_{\xi}q^{\xi}|\,\,a_\xi \in R\}
\]
We pick the multiplicative set $q^{(\mathfrak{ab})_+}:=\{q^\xi| \xi
\in (\mathfrak{ab})_+\}$ and define $R\{\mathfrak{ab}\}$ as the
localization of $R[[(\mathfrak{ab})_+]]$ to this multiplicative set.
Then as explained in Hida the Tate semi-abelian scheme
$Tate_{\mathfrak{a},\mathfrak{b}}(q)$ is defined over the ring
$R\{\mathfrak{ab}\}$ (with $R$ depending on the extra level
structure that we impose) by the algebraization of the rigid
analytic variety
\[
(\mathbf{G}_m \otimes \mathfrak{a}^{-1}\theta^{-1}_F) /
q^{\mathfrak{b}}
\]

Let $X$ be a HBAV over a ring $R$ with real multiplication by
$\mathfrak{r}$. We may define a HBAV $X'$ over $R$ with real
multiplication by $\mathfrak{r}'$ by considering the functor from
schemes $S$ over $R$ to $\mathfrak{r}'$ modules defined by
\[
S \mapsto X'(S):= X(S) \otimes_{\mathfrak{r}} \theta^{-1}_{F'/F}
\]
We let $\mathfrak{c}:=\mathfrak{a}\mathfrak{b}^{-1}$ and consider
the effect of our map on the Tate curve
$Tate_{\mathfrak{a},\mathfrak{b}}(q)$. That is we consider the HBAV
with real multiplication by $\mathfrak{r}'$ defined by
$Tate_{\mathfrak{a},\mathfrak{b}}(q) \otimes_{\mathfrak{r}}
\theta_{F'/F}^{-1} = (\mathbf{G}_m \otimes \theta^{-1}_F) /
q^{\mathfrak{b}}\otimes_{\mathfrak{r}} \theta_{F'/F}^{-1} $. We
consider the map $tr_{F'/F}:
R\{\mathfrak{a}\mathfrak{b}\theta_{F'/F}^{-1}\} \rightarrow
R\{\mathfrak{a}\mathfrak{b}\}$ given by $q^\alpha \mapsto
q^{tr_{F'/F}(\alpha)}$. Then we have,
\begin{lemma}\label{diagonalmapL}
\[
Tate_{\mathfrak{a}\mathfrak{r}',\mathfrak{b}\theta^{-1}_{F'/F}}(q)
\times_{R\{\mathfrak{a}\mathfrak{b}\theta_{F'/F}^{-1}\}}
R\{\mathfrak{a}\mathfrak{b}\} \cong
Tate_{\mathfrak{a},\mathfrak{b}}(q) \otimes_{\mathfrak{r}}
\theta_{F'/F}^{-1}
\]
\end{lemma}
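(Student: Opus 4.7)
The plan is to exhibit the isomorphism by identifying both sides as algebraizations of the same rigid analytic quotient of a torus by a period lattice, and then to check the identification is compatible with the $\mathfrak{r}'$-action.

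First I would identify the ambient torus on both sides. By definition, the LHS (before base change) is the algebraization of $(\mathbf{G}_m\otimes_{\mathbb{Z}}(\mathfrak{a}\mathfrak{r}')^{-1}\theta_{F'}^{-1})/q^{\mathfrak{b}\theta_{F'/F}^{-1}}$ over $R\{\mathfrak{a}\mathfrak{b}\theta_{F'/F}^{-1}\}$. For the RHS, since the functor $X\mapsto X\otimes_{\mathfrak{r}}\theta_{F'/F}^{-1}$ commutes with the presentation ``torus modulo period lattice'', one gets
\[
(\mathbf{G}_m\otimes_{\mathbb{Z}}(\mathfrak{a}^{-1}\theta_F^{-1}\otimes_{\mathfrak{r}}\theta_{F'/F}^{-1}))\,/\,(q^{\mathfrak{b}}\otimes_{\mathfrak{r}}\theta_{F'/F}^{-1}).
\]
The tower formula $\theta_{F'}=\theta_F\mathfrak{r}'\cdot\theta_{F'/F}$ for the different, together with $(\mathfrak{a}\mathfrak{r}')^{-1}=\mathfrak{a}^{-1}\mathfrak{r}'$, identifies the $\mathfrak{r}'$-modules underlying the ambient tori:
\[
\mathfrak{a}^{-1}\theta_F^{-1}\otimes_{\mathfrak{r}}\theta_{F'/F}^{-1}=\mathfrak{a}^{-1}\theta_F^{-1}\theta_{F'/F}^{-1}=(\mathfrak{a}\mathfrak{r}')^{-1}\theta_{F'}^{-1}.
\]

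Next I would match the period lattices. On the LHS, the period attached to $\beta\in\mathfrak{b}\theta_{F'/F}^{-1}$ is the character of $\mathfrak{a}\mathfrak{r}'$ sending $\alpha$ to $q^{\alpha\beta}\in R\{\mathfrak{a}\mathfrak{b}\theta_{F'/F}^{-1}\}^{\times}$; base change via $tr_{F'/F}$ sends it to the character $\alpha\mapsto q^{tr_{F'/F}(\alpha\beta)}\in R\{\mathfrak{a}\mathfrak{b}\}^{\times}$, which is well defined because $tr_{F'/F}(\theta_{F'/F}^{-1})\subseteq\mathfrak{r}$ by the very definition of the relative inverse different. On the RHS, a pure tensor $b\otimes\delta\in\mathfrak{b}\otimes_{\mathfrak{r}}\theta_{F'/F}^{-1}$ is transported through the natural $\mathfrak{r}'$-linear isomorphism
\[
\mathrm{Hom}(\mathfrak{a},\mathbf{G}_m)\otimes_{\mathfrak{r}}\theta_{F'/F}^{-1}\;\stackrel{\sim}{\longrightarrow}\;\mathrm{Hom}(\mathfrak{a}\mathfrak{r}',\mathbf{G}_m),\quad\phi\otimes\delta\mapsto\bigl(\alpha\mapsto\phi(tr_{F'/F}(\alpha\delta))\bigr),
\]
sending the original period $\phi_b:a\mapsto q^{ab}$ of $b$ to the character $\alpha\mapsto q^{tr_{F'/F}(\alpha\delta)\cdot b}=q^{tr_{F'/F}(\alpha b\delta)}$. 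Writing $\beta:=b\delta$, the two period maps coincide on the nose.

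Having matched the ambient tori and the period lattices, the Mumford--Tate style quotient construction recalled in Chapter 4 of Hida's book yields a canonical isomorphism of the rigid analytic quotients over $R\{\mathfrak{a}\mathfrak{b}\}$, compatible with the $\mathfrak{r}'$-action on both sides. Algebraizing this, one obtains the required isomorphism of semi-abelian schemes. The main obstacle is the precise construction of the natural isomorphism $\mathrm{Hom}(\mathfrak{a},\mathbf{G}_m)\otimes_{\mathfrak{r}}\theta_{F'/F}^{-1}\cong\mathrm{Hom}(\mathfrak{a}\mathfrak{r}',\mathbf{G}_m)$ and the verification that it intertwines the two period homomorphisms; the underlying reason the matching is clean is that $\theta_{F'/F}^{-1}$ is invertible as an $\mathfrak{r}'$-module, so tensor products reduce to multiplication of fractional ideals, and the defining property $tr_{F'/F}(\theta_{F'/F}^{-1})\subseteq\mathfrak{r}$ is exactly what makes the base change homomorphism $q^\alpha\mapsto q^{tr_{F'/F}(\alpha)}$ land in $R\{\mathfrak{a}\mathfrak{b}\}$.
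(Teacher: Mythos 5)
Your proof is correct, but it takes a genuinely different route from the paper's. The paper reduces to the complex analytic picture: since the lemma is only applied over number fields, it fixes embeddings into $\C$ and observes that $Tate_{\mathfrak{a},\mathfrak{b}}(q)$ corresponds to the lattice $2\pi i(\mathfrak{b}z+\mathfrak{a}^{-1}\theta_F^{-1})$, so that tensoring with $\theta_{F'/F}^{-1}$ yields the lattice $2\pi i(\mathfrak{b}\theta_{F'/F}^{-1}z'+\mathfrak{a}^{-1}\theta_{F'}^{-1})$ with $z'=\Delta(z)$, while the base change $q^{\alpha}\mapsto q^{tr_{F'/F}(\alpha)}$ corresponds to restriction to the diagonal; comparing lattices then gives the statement over $\C$. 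You instead argue algebraically and rigid-analytically over the base $R\{\mathfrak{a}\mathfrak{b}\}$: the tower formula $\theta_{F'}=\theta_F\mathfrak{r}'\,\theta_{F'/F}$ identifies the ambient tori, the trace duality $\theta_{F'/F}^{-1}\cong\mathrm{Hom}_{\mathfrak{r}}(\mathfrak{r}',\mathfrak{r})$ identifies $\mathrm{Hom}(\mathfrak{a},\mathbf{G}_m)\otimes_{\mathfrak{r}}\theta_{F'/F}^{-1}$ with $\mathrm{Hom}(\mathfrak{a}\mathfrak{r}',\mathbf{G}_m)$, and the two period homomorphisms match exactly, after which the quotient construction recalled in Hida's book gives the isomorphism of semi-abelian schemes. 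What each buys: your argument works over an arbitrary base and avoids the slightly informal step in the paper of deducing an isomorphism over $R\{\mathfrak{a}\mathfrak{b}\}$ from an isomorphism of the associated complex tori, whereas the paper's argument is shorter and makes immediately visible the link with the diagonal embedding $\mathbb{H}_F\hookrightarrow\mathbb{H}_{F'}$ that is exploited right after the lemma. The one step you take on faith, namely that the Serre construction $X\mapsto X\otimes_{\mathfrak{r}}\theta_{F'/F}^{-1}$ commutes with the presentation as torus modulo period lattice, is standard: $\theta_{F'/F}^{-1}$ is a direct summand of a free $\mathfrak{r}$-module, so $X\otimes_{\mathfrak{r}}\theta_{F'/F}^{-1}$ is cut out of a power of $X$ by an idempotent, compatibly with the uniformization; your level of detail there is comparable to the paper's own.
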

\begin{proof} Even though the lemma holds in general we are going to use it
while working over number fields. Hence after fixing embeddings in
the complex numbers we may just prove it over $\C$. Over the complex
numbers this follows easily by observing that
$Tate_{\mathfrak{a},\mathfrak{b}}(q)$ corresponds to that lattice
$2\pi i (\mathfrak{b}z + \mathfrak{a}^{-1}\theta^{-1}_{F})$ for $z
\in \mathbb{H}_F$ and hence $Tate_{\mathfrak{a},\mathfrak{b}}(q)
\otimes_{\mathfrak{r}} \theta_{F'/F}^{-1}$ to the lattice
\[
2\pi i (\mathfrak{b}z +
\mathfrak{a}^{-1}\theta^{-1}_{F})\otimes_{\mathfrak{r}}
\theta_{F'/F}^{-1} =2\pi i (\mathfrak{b}\theta_{F'/F}^{-1}z' +
\mathfrak{a}^{-1}\theta^{-1}_{F'})
\]
with $z' \in \mathbb{H}_{F'}$ the image of $z$ under the diagonal
embedding $\mathbb{H}_F \hookrightarrow \mathbb{H}_{F'}$ induced
from $F \hookrightarrow F'$. Moreover in this case the map
$tr_{F'/F}: R\{\mathfrak{a}\mathfrak{b}\theta_{F'/F}^{-1}\}
\rightarrow R\{\mathfrak{a}\mathfrak{b}\}$ given by $q^\alpha
\mapsto q^{tr_{F'/F}(\alpha)}$ corresponds to setting the
indeterminate $q:=exp(Tr_{F'}(z')):=exp(\sum_{\sigma \in
\Sigma'}z'_{\sigma})$ (where $\sigma \in \Sigma'$ the embeddings
$\sigma : F' \hookrightarrow \C$ and $z'=(z'_{\sigma}) \in
\mathbb{H}^{[F':\Q]}$) equal to the indeterminate
$q=exp(Tr_{F'}(\Delta(z)))$ for $\Delta :\mathbb{H}^{[F:\Q]}
\hookrightarrow \mathbb{H}^{[F':\Q]}$, the diagonal map. In
particular that implies that the complex points of
$Tate_{\mathfrak{a}\mathfrak{r}',\mathfrak{b}\theta^{-1}_{F'/F}}(q)
\times_{R\{\mathfrak{a}\mathfrak{b}\theta_{F'/F}^{-1}\}}
R\{\mathfrak{a}\mathfrak{b}\}$ correspond to the lattice $2\pi i
(\mathfrak{b}\theta_{F'/F}^{-1}z' +
\mathfrak{a}^{-1}\theta^{-1}_{F'})$ for $z'=\Delta(z)$.

\end{proof}
We can use the above lemma to study the effect of the diagonal
embedding to the the $q$-expanion, that is to the values of Hilbert
modular forms on the Tate abelian scheme. For a
$\mathfrak{c}\theta_{F'/F}$-HMF $\phi$ of $F'$ we have that
\[
\phi(Tate_{\mathfrak{a},\mathfrak{b}}(q) \otimes_{\mathfrak{r}}
\theta_{F'/F}^{-1})=\phi(Tate_{\mathfrak{a}\mathfrak{r}',\mathfrak{b}\theta^{-1}_{F'/F}}(q)
\times_{R\{\mathfrak{a}\mathfrak{b}\theta_{F'/F}^{-1}\}}
R\{\mathfrak{a}\mathfrak{b}\})=\]
\[
=\phi(Tate_{\mathfrak{a}\mathfrak{r}',\mathfrak{b}\theta_{F'/F}^{-1}}(q))\times_{R\{\mathfrak{a}\mathfrak{b}\theta_{F'/F}^{-1}\}}
R\{\mathfrak{a}\mathfrak{b}\}
\]

The next question that we need to clarify is what is happening under
this diagonal map for an HBAV with real multiplication by
$\mathfrak{r}$ that has CM by $\mathfrak{R}$, the ring of integers
of a totally imaginary quadratic extension $K$ of $F$. It is well
known that up to isomorphism these are given by the fractional
ideals of $K$. Let us write $\mathfrak{U}$ for one of these and
$X(\mathfrak{U})$ for the corresponding HBAV with CM by
$\mathfrak{R}$. We see that the above map gives us the HBAV
$X(\mathfrak{U}) \otimes_{\mathfrak{r}} \theta^{-1}_{F'/F}$ with
real multiplication by $\mathfrak{r}'$. We set $K'=KF'$ and write
$\mathfrak{R}'$ for its ring of integers. Then we have,
\begin{lemma} Assume that $\mathfrak{R}'=\mathfrak{R}\mathfrak{r}'$. Then the HBAV $X(\mathfrak{U}) \otimes_{\mathfrak{r}} \theta^{-1}_{F'/F}$
has CM by $\mathfrak{R}'$ and it corresponds to the fractional ideal
$\mathfrak{U}\mathfrak{D}^{-1}$ with $\mathfrak{D} =
\theta_{F'/F}\mathfrak{R}'$.
\end{lemma}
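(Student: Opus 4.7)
The plan is to work analytically over $\mathbb{C}$, exactly as in the previous lemma \ref{diagonalmapL}, and then let the bookkeeping of tensor products do the work. The hypothesis $\mathfrak{R}'=\mathfrak{R}\mathfrak{r}'$ is the one ingredient that makes everything fit together.

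First I would unwind the complex uniformisations. Using the lift $\Sigma'$ of $\Sigma$ (which restricts to $\Sigma$ on $K$), the HBAV $X(\mathfrak{U})$ is analytically $\mathbb{C}^{g}/\Sigma(\mathfrak{U})$, while $X(\mathfrak{U})\otimes_{\mathfrak{r}}\theta^{-1}_{F'/F}$ is by construction the complex torus with lattice $\Sigma(\mathfrak{U})\otimes_{\mathfrak{r}}\theta^{-1}_{F'/F}$, sitting inside $\mathbb{C}^{[F':\mathbb{Q}]}\cong(K\otimes_{F}F')\otimes_{\mathbb{Q}}\mathbb{R}$. The computation in the proof of Lemma \ref{diagonalmapL} (where $2\pi i(\mathfrak{b}z+\mathfrak{a}^{-1}\theta_{F}^{-1})\otimes_{\mathfrak{r}}\theta_{F'/F}^{-1}=2\pi i(\mathfrak{b}\theta_{F'/F}^{-1}z'+\mathfrak{a}^{-1}\theta_{F'}^{-1})$) is exactly the same mechanism I want to use here.

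The second step is to upgrade the $\mathfrak{R}$-module structure of the lattice to an $\mathfrak{R}'$-module structure. Since $\theta^{-1}_{F'/F}$ is an $\mathfrak{r}'$-module, the lattice $\mathfrak{U}\otimes_{\mathfrak{r}}\theta^{-1}_{F'/F}$ is naturally a module over $\mathfrak{R}\otimes_{\mathfrak{r}}\mathfrak{r}'$. Under the assumption $\mathfrak{R}'=\mathfrak{R}\mathfrak{r}'$ (guaranteed by the unramifiedness hypotheses from the introduction, since both extensions are linearly disjoint over their common totally real part), the multiplication map $\mathfrak{R}\otimes_{\mathfrak{r}}\mathfrak{r}'\to\mathfrak{R}'$ is an isomorphism. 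Consequently the lattice acquires a canonical $\mathfrak{R}'$-action compatible with the given real multiplication by $\mathfrak{r}'$, and this exhibits $X(\mathfrak{U})\otimes_{\mathfrak{r}}\theta^{-1}_{F'/F}$ as an HBAV with CM by $\mathfrak{R}'$ of type $\Sigma'$.

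Finally, I identify the resulting fractional ideal of $K'$. Viewing everything inside $K'=K\otimes_{F}F'$, the $\mathfrak{R}'$-module $\mathfrak{U}\otimes_{\mathfrak{r}}\theta^{-1}_{F'/F}$ becomes the product
\[
\mathfrak{U}\otimes_{\mathfrak{r}}\theta^{-1}_{F'/F}\;=\;\mathfrak{U}\cdot\theta^{-1}_{F'/F}\mathfrak{r}'\;=\;\mathfrak{U}\cdot\theta^{-1}_{F'/F}\mathfrak{R}'\;=\;\mathfrak{U}\mathfrak{D}^{-1},
\]
where $\mathfrak{D}=\theta_{F'/F}\mathfrak{R}'$; the first equality uses $\mathfrak{R}\otimes_{\mathfrak{r}}\mathfrak{r}'\cong\mathfrak{R}'$ and the rest is definitional. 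The only mildly delicate point, and what I would expect to be the main obstacle, is the verification that the isomorphism $\mathfrak{R}\otimes_{\mathfrak{r}}\mathfrak{r}'\cong\mathfrak{R}'$ genuinely holds under our hypotheses (this reduces to a prime-by-prime check, exploiting that $\mathfrak{R}=\mathfrak{r}\mathfrak{R}_{0}$, $\mathfrak{R}'=\mathfrak{r}'\mathfrak{R}_{0}$, and that $F'/F$ is unramified at all primes of $F$ that ramify in $K_{0}$); once this is in hand the lemma follows.
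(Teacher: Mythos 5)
Your proposal is correct, and it rests on the same bookkeeping identity that the paper uses, namely $\mathfrak{U}\otimes_{\mathfrak{r}}\theta^{-1}_{F'/F}\cong(\mathfrak{U}\otimes_{\mathfrak{r}}\mathfrak{r}')\otimes_{\mathfrak{r}'}\theta^{-1}_{F'/F}\cong\mathfrak{U}\mathfrak{R}'\theta^{-1}_{F'/F}$, but you reach it by a somewhat different route. The paper never passes to $\mathbb{C}$ in this lemma: it gets the CM by $K'$ by writing $K=F(d)$, $K'=F'(d)$ and observing that $d$ survives into the endomorphisms of the twisted variety, and it identifies the ideal functorially via
\[
X(\mathfrak{U})\otimes_{\mathfrak{r}}\theta^{-1}_{F'/F}=X(\mathfrak{U}\mathfrak{R}')\otimes_{\mathfrak{r}'}\theta^{-1}_{F'/F}=X(\mathfrak{U}\mathfrak{R}')/\bigl(X(\mathfrak{U}\mathfrak{R}')[\theta_{F'/F}]\bigr)=X(\mathfrak{U}\theta^{-1}_{F'/F}\mathfrak{R}'),
\]
interpreting the tensor with $\theta^{-1}_{F'/F}$ as the quotient isogeny by the $\theta_{F'/F}$-torsion. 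You instead descend to the complex uniformisation, as in the proof of Lemma \ref{diagonalmapL}, read off the new lattice $\mathfrak{U}\otimes_{\mathfrak{r}}\theta^{-1}_{F'/F}$ inside $K'\otimes_{\mathbb{Q}}\mathbb{R}$, and obtain the $\mathfrak{R}'$-action (hence CM by the full order $\mathfrak{R}'$, with type $\Sigma'$) directly from $\mathfrak{R}\otimes_{\mathfrak{r}}\mathfrak{r}'\cong\mathfrak{R}'$. This is legitimate here, since the classification of $(K',\Sigma')$-CM HBAVs by fractional ideals is an isomorphism-class statement checkable over $\mathbb{C}$ and the paper itself invokes exactly this reduction for Lemma \ref{diagonalmapL}; your version has the small advantage of producing the $\mathfrak{R}'$-structure and the CM type in one stroke, while the paper's is purely algebraic, base-independent, and makes the isogeny $X(\mathfrak{U}\mathfrak{R}')\rightarrow X(\mathfrak{U}\mathfrak{D}^{-1})$ explicit. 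Two minor remarks: injectivity of the multiplication maps $\mathfrak{U}\otimes_{\mathfrak{r}}\mathfrak{r}'\rightarrow\mathfrak{U}\mathfrak{R}'$ and $\mathfrak{R}\otimes_{\mathfrak{r}}\mathfrak{r}'\rightarrow\mathfrak{R}'$ follows at once from linear disjointness of $K$ and $F'$ over $F$ (both sides are torsion-free $\mathfrak{r}'$-modules of rank two), and the equality $\mathfrak{R}'=\mathfrak{R}\mathfrak{r}'$ is a hypothesis of the lemma rather than something to be proved inside it -- its verification in the paper's setting ($\mathfrak{R}=\mathfrak{R}_0\mathfrak{r}$, $\mathfrak{R}'=\mathfrak{R}_0\mathfrak{r}'$ by disjoint ramification) is done in the remark following the proof, so you should not treat it as the main obstacle of the argument.
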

\begin{proof} We write $K=F(d)$ and then $K'=F'(d)$.
In particular since $X(\mathfrak{U})$ has CM by $K$ we conclude that
$X(\mathfrak{U}) \otimes_{\mathfrak{r}} \theta^{-1}_{F'/F}$ has CM
by $K'$ as we have $d \in End(X(\mathfrak{U})) \hookrightarrow
End(X(\mathfrak{U})) \otimes_{\mathfrak{r}} \theta^{-1}_{F'/F})$.
Moreover we have

\[X(\mathfrak{U}) \otimes_{\mathfrak{r}} \theta^{-1}_{F'/F} = X(\mathfrak{U}) \otimes_{\mathfrak{r}}
\mathfrak{r}' \otimes_{\mathfrak{r}'} \theta^{-1}_{F'/F} =
X(\mathfrak{U}\mathfrak{R}') \otimes_{\mathfrak{r}'}
\theta^{-1}_{F'/F} =
X(\mathfrak{U}\mathfrak{R}')/(X(\mathfrak{U}\mathfrak{R}')[\theta_{F'/F}])\]

But we have that
$X(\mathfrak{U}\mathfrak{R}')/(X(\mathfrak{U}\mathfrak{R}')[\theta_{F'/F}])
= X(\mathfrak{U}\theta^{-1}_{F'/F}\mathfrak{R}')$ which concludes
the proof as a fractional ideal of $K'$ has CM by $\mathfrak{R}'$.

\end{proof}

We remark that the condition of the lemma,
$\mathfrak{R}'=\mathfrak{R}\mathfrak{r}'$ holds in our setting.
Indeed we know that $\mathfrak{R}=\mathfrak{R}_0\mathfrak{r}$ as
$[K:\Q]=[K_0:\Q][F:\Q]$ and $F/\Q$ and $K_0/\Q$ have disjoint
ramification. Similarly we have
$\mathfrak{R}'=\mathfrak{R}_0\mathfrak{r}'$. But then we have
$\mathfrak{R}'=\mathfrak{R}_0 \mathfrak{r}'=\mathfrak{R}_0
\mathfrak{r}\mathfrak{r}'=\mathfrak{R}\mathfrak{r}'$.

The key proposition is now is the following which later will allow
us to compare the measures of Katz-Hida-Tilouine over $K$ and $K'$.

\begin{proposition}(Congruences)\label{CongruencesP} Let $\mathfrak{c}$ be a fractional ideal of $F$ relative prime to $p$.
We have the congruences of Eisenstein series
\[
res_\Delta(E_{k}(\phi',\mathfrak{c}\theta_{F'/F})) \equiv
Frob_{p}(E_{pk}(\phi,\mathfrak{c}) \mod{p}
\]
where $\phi := \phi' \circ ver$ and $\phi'$ a locally constant
$\Z_p$-valued function on ${\rr'}_p^{\times} \times
({\rr'}/\ff)^{\times} \times {\rr'}_p^{\times} \times
({\rr'}/\mathfrak{\ff})^{\times}$ with $\phi^{\gamma}=\phi$ for all
$\gamma \in \Gamma$.
\end{proposition}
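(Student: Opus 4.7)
The plan is to prove the congruence at the level of $q$-expansions and then invoke the $q$-expansion principle modulo $p$. I fix a cusp $(\mathfrak{a},\mathfrak{b})$ of $F$ with $\mathfrak{a}\mathfrak{b}^{-1}=\mathfrak{c}$ and both $\mathfrak{a},\mathfrak{b}$ prime to $p$; such a choice is possible by the freedom in Proposition~\ref{q-expansion}. Observe first that the weights line up: parallel weight $k$ over $F'$ becomes parallel weight $pk$ over $F$ under the diagonal restriction, since every embedding of $F$ has $p$ embeddings of $F'$ above it. By Lemma~\ref{diagonalmapL}, the $q$-expansion of $res_\Delta E_{k}(\phi',\mathfrak{c}\theta_{F'/F})$ at $Tate_{\mathfrak{a},\mathfrak{b}}(q)$ is obtained from the $q$-expansion of $E_{k}(\phi',\mathfrak{c}\theta_{F'/F})$ at the $F'$-cusp $(\mathfrak{a}\rr',\mathfrak{b}\theta_{F'/F}^{-1})$ by the substitution $q^{\eta}\mapsto q^{tr_{F'/F}(\eta)}$; combined with Proposition~\ref{q-expansion} this equals
\[
N(\mathfrak{a}\rr')\Bigl\{2^{-pg}L(1-k,\phi',\mathfrak{a}\rr')+\sum_{0\ll\eta\in\mathfrak{ab}\theta_{F'/F}^{-1}}\sum_{\substack{(a,b)\in(\mathfrak{a}\rr'\times\mathfrak{b}\theta_{F'/F}^{-1})/{\rr'}^{\times}\\ ab=\eta}}\phi'(a,b)\,sgn(N(a))\,N(a)^{k-1}\,q^{tr_{F'/F}(\eta)}\Bigr\}.
\]

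The second step is Galois descent modulo $p$. The group $\Gamma=Gal(F'/F)$ of prime order $p$ acts naturally on every set of summation, and $\phi'$ is $\Gamma$-invariant by hypothesis. Therefore every $\Gamma$-orbit of size $p$ contributes $p$ equal terms and vanishes modulo $p$; only $\Gamma$-fixed triples $(\eta,(a,b))$ survive. By cyclicity of $\Gamma$ of prime order these correspond, after matching the unit classes under $\rr^{\times}\subset{\rr'}^{\times}$, to $\eta\in F$ and $(a,b)\in F\times F$ with $ab=\eta$. For such a fixed $\eta$ we have $tr_{F'/F}(\eta)=p\eta$, so the only $q$-powers that survive mod $p$ are $q^{p\xi}$ for $\xi\in\mathfrak{ab}$, which is precisely the $q$-expansion support of $Frob_{p}(E_{pk}(\phi,\mathfrak{c}))$.

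For the coefficient match, the relation $\phi=\phi'\circ ver$ gives $\phi(a,b)=\phi'(a,b)$ on the $\Gamma$-fixed classes, since the verlagerung restricted to elements coming from $F$ is the natural inclusion. Since $\mathfrak{a}$ is prime to $p$, $N_{F/\Q}(a)$ is a $p$-adic unit for any nonzero $a\in\mathfrak{a}$ contributing to the sum, so Fermat's little theorem yields
\[
N_{F'/\Q}(a)^{k-1}=N_{F/\Q}(a)^{p(k-1)}\equiv N_{F/\Q}(a)^{pk-1}\pmod{p},\qquad N(\mathfrak{a}\rr')=N_{F/\Q}(\mathfrak{a})^{p}\equiv N_{F/\Q}(\mathfrak{a})\pmod{p}.
\]
Putting these together, the surviving coefficient of $q^{p\xi}$ on the left equals $N(\mathfrak{a})\sum_{(a,b):\,ab=\xi}\phi(a,b)\,sgn(N(a))\,N(a)^{pk-1}$, which is exactly the coefficient of $q^{p\xi}$ in $Frob_{p}(E_{pk}(\phi,\mathfrak{c}))$ as given by Proposition~\ref{q-expansion}. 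The constant-term congruence $L(1-k,\phi',\mathfrak{a}\rr')\equiv L(1-pk,\phi,\mathfrak{a})\pmod{p}$ follows from the same orbit-plus-Fermat argument applied to the defining partial $L$-sums.

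The main obstacle I expect is the bookkeeping of unit classes and of the intersection $\mathfrak{ab}\theta_{F'/F}^{-1}\cap F$: the sum is taken modulo ${\rr'}^{\times}$ rather than $\rr^{\times}$, so identifying $\Gamma$-fixed classes with genuine $\rr^{\times}$-orbits in $\mathfrak{a}\times\mathfrak{b}$ requires controlling $H^{1}(\Gamma,{\rr'}^{\times})$ (which by Hilbert 90 reduces to units-versus-norms), and one has to confirm that the ramification carried by $\theta_{F'/F}^{-1}$ lies entirely at primes outside the support of $\mathfrak{ab}$, which holds because $F'/F$ is unramified away from $p$ and primes dividing $\mathfrak{n}$ and by the choice $(\mathfrak{ab},p)=1$. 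Once these identifications are pinned down, the congruence is a clean application of Fermat's little theorem and Galois descent along $\Gamma$.
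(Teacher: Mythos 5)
Your overall route is the paper's route: compute $q$-expansions at one cusp, use Lemma \ref{diagonalmapL} to write the restricted series via traces, kill the free $\Gamma$-orbits mod $p$, apply Fermat's little theorem to the surviving terms, and conclude by the $q$-expansion principle. But the step you yourself flag as "the main obstacle" is precisely the non-trivial point, and your proposed way around it does not work. A $\Gamma$-fixed class in $(\mathfrak{a}\rr'\times\mathfrak{b}\theta_{F'/F}^{-1})/{\rr'}^{\times}$ is only fixed up to a cocycle valued in ${\rr'}^{\times}$, and Hilbert 90 is a statement about ${F'}^{\times}$, not about the unit group: $H^{1}(\Gamma,{\rr'}^{\times})$ need not vanish, so "cyclicity of $\Gamma$" does not let you replace a fixed class by a genuinely $F$-rational pair. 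The paper closes this by a different mechanism: for a contributing term the ideals $(a),(b)$ are prime to the ramified primes (see below why), hence being $\Gamma$-stable they descend to ideals of $F$, and then the standing hypothesis $Cl_{F}(1)\hookrightarrow Cl_{F'}(1)$ (assumption (2) of Theorem \ref{maintheorem}) forces these ideals to be principal already over $F$, which produces an $F$-rational representative of the class. Your proposal never invokes this class-group hypothesis, and without it (or some substitute) the identification of the surviving coefficient with the $F$-side coefficient is not established.

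The second gap is that you never use that $\phi'$ is supported on units, although three of your steps silently depend on it. (i) Your constant-term claim is not a valid argument: $L(1-k,\phi',\mathfrak{a}\rr')$ is defined by analytic continuation, so one cannot run an "orbit-plus-Fermat" argument termwise on the defining series; in fact no such congruence is needed, because $\phi'(x,0)=0$ for $\phi'$ supported on $T'^{\times}$, so both constant terms vanish (Remark (1) after Proposition \ref{q-expansion}). (ii) Your reason for why only $q^{p\xi}$ with $\xi\in\mathfrak{ab}$ survives (disjointness of the support of $\theta_{F'/F}$ from that of $\mathfrak{ab}$) is off target: at a wildly ramified prime above $p$ the different exponent is at least $p$, so a $\Gamma$-fixed element of $\mathfrak{b}\theta_{F'/F}^{-1}$ need not lie in $\mathfrak{b}$; such terms are excluded only because $\phi'$ vanishes unless the second variable is a unit at $p$ and at $\ff$, which is also what guarantees that the ideals $(a),(b)$ above are prime to the ramified primes. (iii) Your Fermat step needs $N_{F/\Q}(a)$ prime to $p$, and "$\mathfrak{a}$ prime to $p$" does not give this for individual elements ($p\in\mathfrak{a}$ is possible); again it is the support of $\phi'$ in the first variable that makes the contributing $a$ a $p$-unit. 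Each of these is repairable in a line or two once the support condition is brought in, but as written the argument has both an unjustified key identification (the descent of fixed classes, which genuinely needs assumption (2) of the main theorem) and incorrect justifications where correct ones exist.
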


\begin{proof} We consider the cusp $(\rr',\mathfrak{b}\theta^{-1}_{F'/F})$ for
$\mathfrak{b}$ a fractional ideal of $F$ equal to
$\mathfrak{c}^{-1}$. From Proposition \ref{q-expansion} we know that
the $q$-expansion of the Eisenstein series
$E_{k}(\phi',\mathfrak{c}\theta_{F'/F})$ at the cusp
$(\rr',\mathfrak{b}\theta_{F'/F}^{-1})$ is given by
\[
E_{k}(\phi',\mathfrak{c}\theta_{F'/F})(Tate_{\rr',\mathfrak{b}\theta_{F'/F}^{-1}}(q),\lambda_{can},\omega_{can},i_{can})=
\sum_{ 0 \ll \xi \in
\mathfrak{b}\theta^{-1}_{F'/F}}a(\xi,\phi',k))q^{\xi}
\]
with
\[
a(\xi,\phi',k)= \sum_{(a,b) \in (\mathfrak{r}' \times
\mathfrak{b}\theta^{-1}_{F'/F})/{\mathfrak{r}'}^{\times}, ab=\xi}
\phi'(a,b) sgn(N(a))N(a)^{k-1}
\]
As the function $\phi'$ is supported on the units of
${\rr'}_p^{\times}$ with respect to the second variable (i.e. the
$b$'s above) we have that the above $q$-expansion with respect the
selected cusp is given by
\[
E_{k}(\phi',\mathfrak{c}\theta_{F'/F})(Tate_{\rr',\mathfrak{b}\theta_{F'/F}^{-1}}(q),\lambda_{can},\omega_{can},i_{can})=
\sum_{ 0 \ll \xi \in \mathfrak{b}}a(\xi,\phi',k))q^{\xi}
\]
with
\[
a(\xi,\phi',k)= \sum_{(a,b) \in (\mathfrak{r}' \times
\mathfrak{b})/{\mathfrak{r}'}^{\times}, ab=\xi} \phi'(a,b)
sgn(N(a))N(a)^{k-1}
\]

From Lemma \ref{diagonalmapL} and the discussion after that it
follows that the $q$-expansion of the restricted Eisenstein
$res_{\Delta}E_k(\phi',\mathfrak{c}\theta_{F'/F})$ series at the
cusp $(\rr,\mathfrak{b})$ is given by
\[
res_{\Delta}E_{k}(\phi',\mathfrak{c}\theta_{F'/F})(Tate_{\rr,\mathfrak{b}}(q),\lambda_{can},\omega_{can},i_{can})=
\sum_{ 0 \ll \xi \in \mathfrak{b}}a(\xi,\phi',k)q^{\xi}
\]
where
\[
a(\xi,\phi',k)= \sum_{\xi' \in \mathfrak{b}, Tr_{F'/F}(\xi')=\xi}
a(\xi',\phi',k)
\]
The $q$-expansion of the Eisenstein series
$E_{pk}(\phi,\mathfrak{c})$ at the cusp $(\rr,\mathfrak{b})$ is
given by
\[
E_{pk}(\phi,\mathfrak{c})(Tate_{\rr,\mathfrak{b}}(q),\lambda_{can},\omega_{can},i_{can})=
\sum_{ 0 \ll \xi \in \mathfrak{b}}a(\xi,\phi,pk))q^{\xi}
\]
with
\[
a(\xi,\phi,pk)= \sum_{(a,b) \in (\rr \times \mathfrak{b}
)/{\mathfrak{r}}^{\times}, ab=\xi} \phi(a,b) sgn(N(a))N(a)^{pk-1}
\]
and hence that of $Frob_{p}(E_{pk}(\phi,\mathfrak{c}))$ is given by
\[
Frob_{p}(E_{pk}(\phi,\mathfrak{c})(Tate_{\rr,
\mathfrak{b}}(q),\lambda_{can},\omega_{can},i_{can})=\sum_{ 0 \ll
\xi \in \mathfrak{b}}a(\xi,\phi,pk))q^{p\xi}
\]
In order to establish the congruences of the Eisenstein series it is
enough, thanks to the $q$-expansion principle to establish the
congruences between the $q$-expansions at the selected cusp
$(\rr,\mathfrak{b})$.

We start by observing that the Eisenstein series
$Frob_{p}(E_{pk}(\phi,\mathfrak{c}))$ has non-zero terms only at
terms divisible by $p$ as we assume that the ideal $\mathfrak{b}$ is
prime to $p$. We consider the $\xi^{th}$-term of
$res_{\Delta}E_{k}(\phi',\mathfrak{c})$. It is equal to
\[
a(\xi,\phi',k)= \sum_{\xi' \in \mathfrak{b},
Tr_{F'/F}(\xi')=\xi}\sum_{(a,b) \in (\mathfrak{r}' \times
\mathfrak{b})/{\mathfrak{r}'}^{\times}, ab=\xi'} \phi'(a,b)
sgn(N(a))N(a)^{k-1}
\]
We observe that the group $\Gamma=Gal(F'/F)$ acts on the triples
$(\xi',a,b)$ of the summation above by $(\xi',a,b)^{\gamma}:=
({\xi'}^\gamma,a^\gamma,b^{\gamma})$ as $\mathfrak{b}$ is an ideal
of $F$ hence is preserved by $\Gamma$, where the action on $a$ and
$b$ is modulo the units in $\rr'$ to understand. We write $\gamma$
for a generator of $\Gamma$. We consider two cases, the case where
$(\xi,a,b)$ is fixed by $\gamma$ and the case where it is not. In
the first case we notice that as $\phi'$ is fixed under $\Gamma$ we
have that $\phi'(a^{\gamma},b^{\gamma})=\phi'(a,b)$. Hence we have
\[
\sum_{i=0}^{p-1}\phi'(a^{\gamma^{i}},b^{\gamma^{i}})sgn(N(a^{\gamma^{i}}))N(a^{\gamma^{i}})^{k-1}
= p \,\,\phi'(a,b) sgn(N(a))N(a)^{k-1} \equiv 0\,\,\, \mod{p}
\]
If $(\xi',a,b)$ is fixed by $\gamma$ then that implies that (i)
$\xi' \in F$ and (ii) the ideals generated by $a$ and $b$ in $\rr'$
are coming from ideals in $\rr$ as they are relative prime to
$\theta_{F'/F}$ i.e. to the primes where the extension is ramified.
Moreover as we assume that $Cl_{F} \hookrightarrow Cl_{F'}$ we have
that actually the elements themselves are (up to units) equal to
elements from $F$. In this case we first notice that
$\xi=Tr_{F'/F}(\xi')=p\xi'$ and as $\xi' \in
\mathfrak{b}\mathfrak{r}'$ with $\mathfrak{b}$ prime to $p$ we have
that $\xi$ is also divisible by $p$ in the sense that is of the form
$p\xi'$ for $\xi' \in \mathfrak{b}$. Further we have the congruences
modulo $p$
\[
\phi'(a,b) sgn(N_{F'}(a))N_{F'}(a)^{k-1} \equiv \phi(a,b)
sgn(N_F(a)^{p})N_{F}(a)^{p(k-1)}\]
\[
\equiv \phi(a,b) sgn(N_F(a))N_{F}(a)^{pk-1} \mod{p}
\]
In particular we conclude that $a(\xi,\phi',k) \equiv 0 \mod{p}$ if
$\xi$ is not of the form $p \xi'$ for $\xi' \in \mathfrak{b} \subset
F$. In the case where $\xi$ is of the form $p\xi'$ we have seen that
\[
a(p\xi',\phi',k) \equiv \sum_{(a,b) \in (\rr,\mathfrak{b})
/{\mathfrak{r}}^{\times}, ab=\xi'}\phi(a,b)
sgn(N_F(a))N_{F}(a)^{pk-1} = a(\xi',\phi,pk) \mod{p}
\]
But $a(\xi',\phi,pk)$ is the ${p \xi'}^{th}$ Fourier term of
$Frob_{p}(E_{pk}(\phi,\mathfrak{c}))$ which allow us to conclude the
proof of the proposition.

\end{proof}

\section{Using the theory of Complex Multiplication}

Before we prove our main theorem we need to make some preparation.
In this section we explain how we can use the theory of complex
multiplication to understand how Frobenious operates on values of
Eisenstein series of CM points. We recall that we consider the CM
types $(K_0,\Sigma_0)$ and its lift $(K,\Sigma)$. Moreover by our
setting we have that the reflex field for both of these CM types is
simply $(K_0,\Sigma_0)$. We first note that since we assume that $p$
is unramified in $F$ then the triples
$(X(\mathfrak{U}),\lambda(\mathfrak{U}),i(\mathfrak{U})))$ are
defined over the ring of integers of $W=W(\bar{\mathbb{F}}_p)$ (see
\cite{Hida1} page 69). We write $\Phi$ for the extension of the
Frobenious in $Gal(\Q_p^{nr}/\Q_p)$ to $W$. In this section we prove
the following proposition which is just a reformulation of what is
done in \cite{Katz1} (page 539) in the case of quadratic imaginary
fields.

\begin{proposition}(Reciprocity law on CM points)\label{reciprocityP} For every
fractional ideal $\mathfrak{U}$ of the CM field $K$ and $\phi$ a
$\Z_p$ valued locally constant function we have the reciprocity law
\[
Frob_{p}(E_{pk}(\phi,\mathfrak{c})(X(\mathfrak{U}),\lambda(\mathfrak{U}),i(\mathfrak{U}))
=
(E_{pk}(\phi,\mathfrak{c})(X(\mathfrak{U}),\lambda(\mathfrak{U}),i(\mathfrak{U})))^{\Phi}
\]
\end{proposition}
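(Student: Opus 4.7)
The plan is to follow closely Katz's proof of the analogous reciprocity law for elliptic modular forms in \cite{Katz1} (p.~539), adapted to the Hilbert setting. The strategy rests on two ingredients: the geometric description of the Frobenius operator $Frob_{p}$ on the space of $p$-adic Hilbert modular forms (as a rule on test objects), together with the Shimura--Taniyama reciprocity law describing the Galois action of $\Phi$ on CM points lying in the ordinary locus of the Hilbert--Blumenthal moduli space.

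First I would note that because the CM type $\Sigma$ is $p$-ordinary (i.e.\ $\mathfrak{R}\otimes \Z_{p}\cong \prod_{\mathfrak{P}\in \Sigma_{p}}\mathfrak{R}_{\mathfrak{P}}\times \prod_{\mathfrak{P}\in \bar{\Sigma}_{p}}\mathfrak{R}_{\mathfrak{P}}$), the HBAV $X(\mathfrak{U})$ has good ordinary reduction over $W=W(\bar{\F}_p)$. Its canonical subgroup $C$ of rank $p^{g}$ is then precisely $X(\mathfrak{U})[\Sigma_{p}]$, and the quotient $X(\mathfrak{U})/C$ is again an HBAV with CM by $\mathfrak{R}$ of type $\Sigma$, carrying canonically induced polarization $\lambda'$ and $p^{\infty}\ff^{2}$-level structure $\imath'$.

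Next, I would invoke the geometric interpretation of $Frob_{p}$ on $p$-adic Hilbert modular forms: for every ordinary test object $(X,\lambda,\omega,\imath)$ one has $Frob_{p}(f)(X,\lambda,\omega,\imath)=f(X/C,\lambda',\omega',\imath')$. This identity is established by checking it on the Tate object (both operations amount to $q\mapsto q^{p}$ on $q$-expansions at the standard cusp) and then extended by Hida--Tilouine's $q$-expansion principle to all ordinary test objects. Applying this with $f=E_{pk}(\phi,\mathfrak{c})$, the left-hand side of the proposition becomes $E_{pk}(\phi,\mathfrak{c})(X(\mathfrak{U})/C,\lambda',\imath')$.

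Finally, the Shimura--Taniyama reciprocity law applied to the $p$-ordinary CM datum $(K,\Sigma)$ -- together with the fact that the reflex field is $(K_{0},\Sigma_{0})$ in which $p$ splits as $\mathfrak{p}\bar{\mathfrak{p}}$ with $\mathfrak{p}\in\Sigma_{0,p}$ -- identifies $(X(\mathfrak{U})/C,\lambda',\imath')$ with the Frobenius conjugate $(X(\mathfrak{U}),\lambda,\imath)^{\Phi}$. Since $E_{pk}(\phi,\mathfrak{c})$ is a rule defined over $\Z_{p}$ (its $q$-expansion has $\Z_{p}$-coefficients because $\phi$ is $\Z_{p}$-valued and $\mathfrak{a}$ is prime to $p$), evaluation commutes with base change along $\Phi$, and we conclude $E_{pk}(\phi,\mathfrak{c})((X(\mathfrak{U}),\lambda,\imath)^{\Phi})=\Phi\bigl(E_{pk}(\phi,\mathfrak{c})(X(\mathfrak{U}),\lambda,\imath)\bigr)$, which is the desired reciprocity. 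The main obstacle, and the point requiring genuine care, is the identification in the Shimura--Taniyama step of the \emph{induced} $p^{\infty}\ff^{2}$-level structure $\imath'$ with the Frobenius-transported structure $\imath^{\Phi}$; this hinges on the compatibility of the trivialization coming from the chosen totally imaginary $\delta\in K$ with the ordinary splitting $\mathfrak{R}_{p}\cong\prod_{\Sigma_{p}}\mathfrak{R}_{\mathfrak{P}}\times\prod_{\bar{\Sigma}_{p}}\mathfrak{R}_{\mathfrak{P}}$, and ultimately reduces, component by component along $\Sigma_{p}$, to Katz's verification in the quadratic imaginary case.
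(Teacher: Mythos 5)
Your proposal is correct and follows essentially the same route as the paper: the paper likewise uses the $\Z_p$-rationality of $E_{pk}(\phi,\mathfrak{c})$ and the $W$-integrality of the CM triple to commute $\Phi$ with evaluation, invokes the main theorem of complex multiplication (via Lang, with the reflex field $(K_0,\Sigma_0)$ and ordinarity) to identify the $\Phi$-base-changed triple with the quotient by the canonical subgroup $H_{can}=i(\theta_F\otimes\mu_p)$, and concludes by the $q\mapsto q^p$ description of $Frob_p$ on the Tate object. The only cosmetic difference is that you describe the canonical subgroup as the $\Sigma_p$-torsion while the paper describes it through the trivialization $i$, which amounts to the same thing under the ordinary splitting.
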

\begin{proof}
Let us write $\mathcal{R}$ for the ring of integers of $W$. As we
are assuming that $\phi$ is $\Z_p$ valued and we know from above
that the triple
$(X(\mathfrak{U}),\lambda(\mathfrak{U}),i(\mathfrak{U}))$ is defined
over $\mathcal{R}$ we have that the value of the Eisenstein series
is in $\mathcal{R}$. From the compatibility of $p$-adic modular
forms with ring extensions and the fact that the Eisenstein series
is defined over $\Z_p$ we have that
\[
(E_{pk}(\phi,\mathfrak{c})(X(\mathfrak{U}),\lambda(\mathfrak{U}),i(\mathfrak{U})))^{\Phi}=
(E_{pk}(\phi,\mathfrak{c})(X(\mathfrak{U}),\lambda(\mathfrak{U}),i(\mathfrak{U})
\otimes_{\mathcal{R},\Phi} \mathcal{R}))
\]
where the tensor product is with respect to the map $\Phi:
\mathcal{R} \rightarrow \mathcal{R}$, i.e. the base change of the
triple $(X(\mathfrak{U}),\lambda(\mathfrak{U}),i(\mathfrak{U}))$
with respect to the frobenious map. But then from the theory of
complex multiplication see \cite{Lang} (Lemma 3.1 in page 61 and
Theorem 3.4 in page 66), the fact that the reflex field of
$(K,\Sigma)$ is $(K_0,\Sigma_0)$ and that $p$ is ordinary we have
that
\[
(X(\mathfrak{U}),\lambda(\mathfrak{U}),i(\mathfrak{U}))
\otimes_{\mathcal{R},\Phi} \mathcal{R} \cong
(X'(\mathfrak{U}),\lambda'(\mathfrak{U}),i'(\mathfrak{U}))
\]
where $(X'(\mathfrak{U}),\lambda'(\mathfrak{U}),i'(\mathfrak{U}))$
is the quotient obtained by $X/H_{can}$ with
$H_{can}:=i(\theta_F\otimes \mathbf{\mu}_{p})$ as explained in Katz
\cite{Katz2} page 223. Moreover as in Katz we have that the Tate
HBAV
$(Tate'_{\mathfrak{a},\mathfrak{b}}(q),\lambda'_{can},i'_{can})$ is
obtained from
$(Tate_{\mathfrak{a},\mathfrak{b}}(q),\lambda_{can},i_{can})$ by the
map $q \mapsto q^p$ from which we conclude the proposition.
\end{proof}

\section{Complex and $p$-adic periods.}
In this section we study the various periods (archimedean and
$p$-adic) that appear in the interpolation properties of the
$KHT$-measure. We also consider the relative situation and we focus
especially in the case of interest with $(K_0,\Sigma_0) < (K,\Sigma)
< (K',\Sigma')$.

{\textbf{The periods of Katz:}} We start by recalling the periods
defined by Katz and then showing that in the case of the twisted
measure the periods used remain unchanged. We follow Katz (see
\cite{Katz2} page 268) and fix a nowhere vanishing differential over
$A:=\{a \in \bar{\Q}: incl(p)(a) \in D_p\}$
\[
\omega:Lie(X(\mathfrak{R})) \cong \theta_{F}^{-1} \otimes A
\]
Then for any fractional ideal $\mathfrak{U}$ of $K$ that is relative
prime to the place induced by $incl(p)$ we have an identification
$Lie(X((\mathfrak{U}))=Lie(X(\mathfrak{R}))$ and hence one may use
the very same $\omega$ to fix a nowhere differential of
$X(\mathfrak{U})$ by
\[
\omega(\mathfrak{U}):Lie(X((\mathfrak{U}))=Lie(X(\mathfrak{R}))
\cong \theta_{F}^{-1} \otimes A
\]
We use $incl(\infty):A \hookrightarrow \C$ to define the standard
complex nowhere vanishing differential
$\omega_{trans}(X(\mathfrak{U}))$ associated to the torus
$\C^{\Sigma}/\Sigma(\mathfrak{U})$. Then as in Katz (\cite{Katz2},
Lemma 5.1.45) we have an element
$\Omega^{Katz}_{K}=(\ldots,\Omega(\sigma),\ldots)) \in
(\C^{\times})^{\Sigma}$ such that for all fractional ideals
$\mathfrak{U}$ of $K$ relative prime to $p$ we have
\[
\omega(\mathfrak{U}) = \Omega^{Katz}_{K}
\omega_{trans}(\mathfrak{U})
\]
Of course the same considerations hold for $K_0$ and $K'$.
Especially for $K'$ we want to compute also the periods for the
twisted HBAV $X(\mathfrak{U} \otimes \xi)$. From the isomorphism
$X(\mathfrak{U}) \cong X(\mathfrak{U}_j \otimes \xi^{-1})$ we have
that we can pick the invariant differentials
$\omega(\mathfrak{U}\otimes \xi^{-1})$ and
$\omega_{trans}(\mathfrak{U}\otimes \xi^{-1})$ as $\xi \cdot
\omega(\mathfrak{U})$ and $\xi \cdot \omega_{trans}(\mathfrak{U})$
respectively. In particular we have that the selected periods are
equal to $\Omega^{Katz}_{K'}$. Similarly Katz (\cite{Katz2} Lemma
5.1.47) defines $p$-adic periods in $(D_p^{\times})^{\Sigma}$
relating the invariant differential $\omega(\mathfrak{U})$ to the
invariant differential $\omega_{can}(\mathfrak{U})$ obtained from
the $p^{\infty}$-structure. As above we obtain that the $p$-adic
periods for the twisted HBAV are the same.

{\textbf{Picking the periods compatible:}}(See also \cite{deShalit}
page 195 on the properties of the periods defined by Katz). Now we
consider the more specific setting where $(K,\Sigma)$ and
$(K',\Sigma')$ are lifted from the type $(K_0,\Sigma_0)$. Moreover
as we assume that $K_0$ is the CM field of an elliptic curve defined
over $\Q$, we have that $\mathfrak{R}_0$ has class number one, i.e.
it is a P.I.D. That means that the ring of integers $\mathfrak{R}$
and $\mathfrak{R}'$ are free over $\mathfrak{R}_0$. That means that
we have
\[
Lie(X(\mathfrak{R})) = \oplus_{j=1}^{g}Lie(X(\mathfrak{R}_0))
\]
and similarly
\[
Lie(X(\mathfrak{R}')) = \oplus_{j=1}^{g'}Lie(X(\mathfrak{R}_0))
\]
In particular that implies that
\[
\Omega_{K}^{Katz}= (\ldots,
\Omega(E),\ldots),\,\,\,and\,\,\,\Omega_{K'}^{Katz}=(\ldots,\Omega(E),\ldots)
\]
Similarly for the $p$-adic periods we observe that $X(\mathfrak{R})
\cong E \times \ldots \times E$ and hence
$X(\mathfrak{R})[p^\infty]\cong E[p^\infty] \times \ldots \times
E[p^{\infty}]$ where $E$ is the elliptic curve defined over $\Q$
that corresponds to the ideal $\mathfrak{R}_0$ with respect to the
CM type $(K_0,\Sigma_0)$. These considerations imply that
\[
\Omega_{p,K}^{Katz}=
(\ldots,\Omega_p(E),\ldots),\,\,\,and\,\,\,\Omega_{p,K'}^{Katz}=(\ldots,\Omega_p(E),\ldots)
\]

We note that the definition of the periods of Katz in general are
independent of the Gr\"{o}ssencharacter in general since they depend
only on its infinite type. This is why it is important to pick the
differentials $\omega(\mathfrak{R})$ and $\omega(\mathfrak{R}')$
properly. And actually in our setting we have a very natural choice
by considering the elliptic curve $E/\Q$ to whom the
Gr\"{o}ssencharacter $\psi_0$ is attached (recall that
$\psi_K=\psi_0 \circ N_{K/\Q}$ and $\psi_{K'}= \psi_{0} \circ
N_{K'/\Q}$.

\section{Congruences of measures}

We are now ready to prove our main theorem. We recall that this
amounts to proving the following

\begin{theorem}If (i) $Cl^{-}_K(\mathfrak{J}) \cong Cl^{-}_{K'}(\mathfrak{J})^{\Gamma}$ (ii) $Cl_F(1)\hookrightarrow Cl_{F'}(1)$ and (iii) $\theta_{F'/F}=(\xi)$
with $\xi \gg 0$ and $\xi =\zeta\bar{\zeta}$ for $\zeta \in K'$ then
we have the congruences
\[
\frac{\int_{G_K}\epsilon \circ ver \,\,\,\,
d\mu^{KHT}_{\psi_K,\delta}}{\Omega_{p}(E)^g} \equiv
\frac{\int_{G_{K'}}\epsilon\,\,\,\,\,
d\mu^{KHT,tw}_{\psi_{K'},\delta,\xi}}{\Omega_p(E)^{pg}} \mod{p\Z_p}
\]
for all $\epsilon$ locally constant $\Z_p$-valued functions on
$G_{K'}$ with $\epsilon^{\gamma}=\epsilon$ and belong to the
cyclotomic part of it, i.e. when it is written as a sum of finite
order characters it is of the form $\epsilon= \sum c_{\chi}\chi$
with $\chi^{\tau} = \chi$.
\end{theorem}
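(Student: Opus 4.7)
The strategy is to reduce the claimed congruence of integrals to a term-by-term comparison of Eisenstein-series values on CM triples, and then combine the two technical inputs already prepared: the congruence of Eisenstein series under the diagonal embedding (Proposition~\ref{CongruencesP}) and the reciprocity law on CM points (Proposition~\ref{reciprocityP}).

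First I would unfold both measures according to the constructions in Sections~3 and~4, expanding $\epsilon$ as a sum of finite-order cyclotomic characters. By the construction of $\mu^{KHT,tw}_{\psi_{K'},\delta,\xi}$, together with the twisted triples built from the element $\xi$ supplied by assumption~(iii), the right-hand side becomes a sum over representatives $\{\mathfrak{U}_j\}$ of $Cl^{-}_{K'}(\mathfrak{J})^{\Gamma}$ of values $E_{1}(\phi_j',\mathfrak{c}_j\theta_{F'/F})\bigl(X(\mathfrak{U}_j)\otimes\theta^{-1}_{F'/F},\lambda^\xi,i^\xi\bigr)$, divided by $\Omega_p(E)^{pg}$. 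Assumption~(i) lets us take the same $\{\mathfrak{U}_j\}$ as representatives of $Cl^{-}_K(\mathfrak{J})$ for the left-hand side, whose $j$-th term is $E_{1}(\phi_j,\mathfrak{c}_j)(X(\mathfrak{U}_j),\lambda,i)/\Omega_p(E)^g$. The $\Gamma$-invariance of $\epsilon$ matches up the local functions via the transfer: $\phi_j = \phi_j'\circ ver$, which is the hypothesis on which Proposition~\ref{CongruencesP} rests. Assumption~(ii) enters inside the proof of that proposition, in the fixed-point analysis of the $\Gamma$-action on pairs $(a,b)$.

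The heart of the argument is then the term-by-term comparison. By Lemma~\ref{diagonalmapL} and the surrounding geometric discussion, the twisted $K'$-triple is the base change of the $K$-triple $(X(\mathfrak{U}_j),\lambda,i)$ along the diagonal embedding $\Delta\colon\mathbb{H}_F\hookrightarrow\mathbb{H}_{F'}$, so the $j$-th RHS numerator equals the value at $(X(\mathfrak{U}_j),\lambda,i)$ of the pull-back $res_\Delta E_1(\phi_j',\mathfrak{c}_j\theta_{F'/F})$. Proposition~\ref{CongruencesP} (with $k=1$) gives
\[
res_\Delta E_1(\phi_j',\mathfrak{c}_j\theta_{F'/F}) \equiv Frob_p\bigl(E_p(\phi_j,\mathfrak{c}_j)\bigr)\mod{p},
\]
Proposition~\ref{reciprocityP} identifies the value of $Frob_p$ on the CM triple with the $\Phi$-image of the value of $E_p(\phi_j,\mathfrak{c}_j)$, and the Katz identity $E_1(\phi_j)=E_p(\phi_{j,\mathrm{fin}})$ on CM triples identifies the latter with the $j$-th LHS numerator $\nu_j := E_1(\phi_j,\mathfrak{c}_j)(X(\mathfrak{U}_j),\lambda,i)$. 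Thus, modulo $p$, the $j$-th RHS numerator is $\nu_j^\Phi$.

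Finally one must reconcile the period denominators modulo $p$. Using the defining relation $\Omega_p(E)^\Phi = u\,\Omega_p(E)$ with $u\in\Z_p^\times$ the unit root of $1-a_p(E)X+pX^{2}$, together with Lemma~\ref{rationalityL} and the Galois-equivariance recorded after it, one computes that $\nu_j^\Phi/\Omega_p(E)^{pg}$ differs from $\nu_j/\Omega_p(E)^{g}$ by a correction built out of $u^g$, the ratio $\Omega_p(E)^{(p-1)g}$, and the modified local factors in the twisted interpolation formula (Proposition~\ref{measureKHTtwP}). I expect the main obstacle to be precisely the careful cancellation of these three contributions; it should be engineered by the modified Euler factors $\prod_{a_j=0}\chi(\mathfrak{p}_j)^{-e_j}$ together with the $\zeta$-computation in the special case $\mathfrak{P}=(\zeta)$ at the end of the proof of Proposition~\ref{measureKHTtwP} (which is what assumption~(iii) of the theorem makes available), and by the cyclotomic hypothesis on $\epsilon$ which reduces the remaining Galois action to its cyclotomic part. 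Once that cancellation is established, summing the term-by-term congruences over $j$ yields the theorem.
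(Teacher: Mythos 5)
Your overall skeleton (diagonal embedding, Eisenstein congruence, CM reciprocity, period bookkeeping via the unit $u$) is indeed the paper's strategy, but there are two genuine gaps. The first is at the very first reduction: the twisted measure over $K'$ is by definition a sum over representatives of \emph{all} of $Cl^{-}_{K'}(\mathfrak{J})$, not of $Cl^{-}_{K'}(\mathfrak{J})^{\Gamma}$, and assumption (i) only identifies the $\Gamma$-fixed classes with those coming from $K$. You must first show that the classes not fixed by $\Gamma$ contribute $0$ modulo $p$: one chooses the representatives so that $\Gamma$ permutes them, proves that for $\phi^{\gamma}=\phi$ the Eisenstein value at $X(\mathfrak{U}_j^{\gamma}\otimes\theta^{-1}_{F'/F})$ equals the value at $X(\mathfrak{U}_j\otimes\theta^{-1}_{F'/F})$ (this uses $P\phi(w^{\gamma})=P\phi(w)$ from the partial Fourier transform, and the observation $\xi^{\gamma}/\xi\in\mathfrak{R}'^{\times}$ so that $\mathfrak{U}_j^{\gamma}\otimes(\xi)=\mathfrak{U}_j^{\gamma}\otimes(\xi^{\gamma})$), and then each free orbit of size $|\Gamma|=p$ drops out mod $p$. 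Without this step your claim that the right-hand side is indexed by $\Gamma$-invariant classes is simply false, and it is also here (not only via (i)) that the possible error term $\Delta(\epsilon)$ over the set $S$ of invariant classes outside $Im(\rho)$ would appear.

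The second gap is the period/character bookkeeping that you defer as an ``expected cancellation'', and your guess about where it comes from is misdirected. Proposition~\ref{CongruencesP} compares the restricted weight-one series over $F'$ with $Frob_p$ of a weight-$p$ series over $F$; on the CM triples this weight-$p$ series is the one attached to $\psi_K^{p}$ (since $\psi_{K'}\circ ver=\psi_K^{p}$), so the comparison lands on $\int(\epsilon\circ ver)\,d\mu^{KHT}_{\psi_K^{p},\delta}$, \emph{not} on the left-hand side $\int(\epsilon\circ ver)\,d\mu^{KHT}_{\psi_K,\delta}$; your appeal to ``$E_1(\phi_j)=E_p(\phi_{j,\mathrm{fin}})$'' conflates the two. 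The actual reconciliation uses: (a) the integrality lemma for cyclotomic $\epsilon$ (via Lemma~\ref{rationalityL} and Hida's rationality plus Galois equivariance), so that $\Phi$ acts on $\int(\epsilon\circ ver)\,d\mu^{KHT}_{\psi_K^{p},\delta}$ only through $\Omega_p(E)^{pg}\mapsto u^{pg}\Omega_p(E)^{pg}$; (b) $u^{p}\equiv u \bmod p$; (c) $\hat\psi_K^{p}\equiv\hat\psi_K \bmod p$ to pass from $\mu^{KHT}_{\psi_K^{p},\delta}$ to $\mu^{KHT}_{\psi_K,\delta}$; and (d) $u\equiv\Omega_p(E)^{p-1}\bmod p$ to convert the denominator $\Omega_p(E)^{pg}$ into $\Omega_p(E)^{g}$. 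The modified Euler factors $\prod\chi(\mathfrak{p}_j)^{-e_j}$ and the $\zeta$-computation in Proposition~\ref{measureKHTtwP} play no role in this cancellation: their purpose is to give the twisted measure the correct interpolation formula, while the congruence itself is proved entirely at the level of Eisenstein values on CM triples. What assumption (iii) buys in the proof is that the twisted triple $X(\mathfrak{U}_j\otimes\theta^{-1}_{F'/F})$, with its polarization by $\delta$ and its $\xi$-twisted level structure, is exactly the base change along $\Delta$ of the triple over $K$, so that the $q$-expansion congruence and the reciprocity law apply verbatim.
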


The strategy for proving the above theorem is as follows. By
definition we have that the twisted $KHT$-measure is given as
\[
\int_{G'}\phi(g) \mu_{\delta,\xi}^{KHT,tw}(g):= \sum_j
\int_{T}\tilde{\phi}_j dE_j := \sum_j
E_1(\phi_j,\mathfrak{c}_j)(X(\mathfrak{U}^{\xi}_j),\lambda^{\xi}_{\delta}(\mathfrak{U}_j\otimes
\theta_{F'/F}^{-1}),\imath^{\xi}(\mathfrak{U}_j \otimes
\theta_{F'/F}^{-1}))
\]
We consider the set of representatives $\{\mathfrak{U}_j\}$ of
$Cl^{-}_{K'}(\mathfrak{J})$. If we consider the map
\[
\rho : Cl^{-}_{K}(\mathfrak{J}) \rightarrow
Cl^{-}_{K'}(\mathfrak{J})^{\Gamma}
\]
We may pick representatives of $Im(\rho)$ to be fractional ideals
$\mathfrak{U}_j$ with the property
$\mathfrak{U}_j^{\gamma}=\mathfrak{U}_j$ for all $\gamma \in
\Gamma$. Moreover we may pick the other representatives of
$Cl^{-}_{K'}(\mathfrak{J})$ such that if $\mathfrak{U}_j$ is a
representative then if $\mathfrak{U}^{\gamma}_j$ is not in the same
equivalent class as $\mathfrak{U}_j$ then it is also a
representative (and this must hold for all $\gamma \in \Gamma$). We
may split the twisted measure as follows,
\[
\int_{G'}\phi(g) \mu_{\delta,\xi}^{KHT}(g)= \sum_{\mathfrak{U}_j \in
Im(\rho)}
E_1(\phi_j,\mathfrak{c}_j)(X(\mathfrak{U}^{\xi}_j),\lambda^{\xi}_{\delta}(\mathfrak{U}_j\otimes
\theta_{F'/F}^{-1}),\imath^{\xi}(\mathfrak{U}_j \otimes
\theta_{F'/F}^{-1}))\]
\[
+ \sum_{\mathfrak{U}_j \not \in Im(\rho)}
E_1(\phi_j,\mathfrak{c}_j)(X(\mathfrak{U}^{\xi}_j),\lambda^{\xi}_{\delta}(\mathfrak{U}_j\otimes
\theta_{F'/F}^{-1}),\imath^{\xi}(\mathfrak{U}_j \otimes
\theta_{F'/F}^{-1}))
\]
Our strategy is to compare the first summand (i.e those CM points
that are coming from $K$) with the $KHT$-measure of $K$ through the
diagonal embedding that we have worked above. For the other part we
will prove directly that under the assumptions of our theorem is in
$p \Z_p$. We start with the following proposition

\begin{proposition}Let $\mathfrak{U}_j$ be a fractional ideal of $K'$. Then for $\phi$ a locally constant function invariant under $\Gamma$ we have,
\[
E_{k}(\phi,\mathfrak{c}^{\gamma}_j)(X({\mathfrak{U}_j^{\gamma}}^{(\xi)},\lambda^{\xi}_{\delta}(\mathfrak{U}^{\gamma}_j\otimes
\theta_{F'/F}^{-1}),\imath^{\xi}(\mathfrak{U}^{\gamma}_j \otimes
\theta_{F'/F}^{-1})))=E_k(\phi_j,\mathfrak{c}_j)(X(\mathfrak{U}^{\xi}_j),\lambda^{\xi}_{\delta}(\mathfrak{U}_j\otimes
\theta_{F'/F}^{-1}),\imath^{\xi}(\mathfrak{U}_j \otimes
\theta_{F'/F}^{-1}))
\]
for $\gamma \in \Gamma$.
\end{proposition}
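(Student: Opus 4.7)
The plan is to exhibit, for each $\gamma\in\Gamma$, a canonical isomorphism between the two twisted $\mathfrak{c}_j$-polarized HBAV quadruples appearing on the two sides, and then to invoke the $\Gamma$-invariance of $\phi$ together with the fact that $E_k(\psi,\mathfrak{c}')$ depends only on the isomorphism class of the input quadruple.

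First I would unpack which pieces of the datum are $\Gamma$-fixed. Since $\Gamma=\mathrm{Gal}(K'/K)$ pins down $K$ pointwise, the element $\delta\in K$ is $\Gamma$-fixed, and the CM types $\Sigma$ and its lift $\Sigma'$ are preserved setwise; in particular $\Sigma'_p$ is fixed and the decomposition $\mathfrak{R}'\otimes\mathbb{Z}_p\cong\mathfrak{r}'_p\times\mathfrak{r}'_p$ used to build the $p^\infty$-level structure is $\Gamma$-equivariant. The ideal $\theta_{F'/F}$ is obviously $\Gamma$-stable, so $\xi$ and $\xi^\gamma$ differ by a unit of $\mathfrak{r}'$; and because complex conjugation $c$ commutes with $\gamma$, we have $\mathfrak{c}_j^\gamma=\mathfrak{c}\bigl(\mathfrak{U}_j^\gamma(\mathfrak{U}_j^\gamma)^c\bigr)^{-1}\theta_{F'/F}$, i.e.\ $\mathfrak{c}_j^\gamma$ is what one would obtain by feeding $\mathfrak{U}_j^\gamma$ into the same recipe that produced $\mathfrak{c}_j$.

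Next I would build the isomorphism of twisted triples. Transport by $\gamma$ takes $X(\mathfrak{U}_j\otimes\xi^{-1})\cong\mathbb{C}^{\Sigma'}/\Sigma'(\mathfrak{U}_j\otimes\xi^{-1})$ to $\mathbb{C}^{\Sigma'}/\Sigma'(\mathfrak{U}_j^\gamma\otimes(\xi^\gamma)^{-1})\cong X(\mathfrak{U}_j^\gamma\otimes\theta_{F'/F}^{-1})$, and this identification extends across the base since the moduli data are defined over the ring of integers of $W(\overline{\mathbb{F}}_p)$ on which $\Gamma$ acts. Because $\delta$ is $\Gamma$-fixed and the pairing $\langle u,v\rangle=(u^cv-uv^c)/2\delta$ is intrinsic to the lattice and $\delta$, the polarization $\lambda^\xi_\delta$ is carried to $\lambda^{\xi^\gamma}_\delta$; the ambiguity coming from replacing $\xi$ by $\xi^\gamma$ is absorbed into the $\mathfrak{r}'^\times$-action, so this is the same polarization up to an automorphism of the target lattice. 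The $p^\infty\mathfrak{f}^2$-level structure $\imath^\xi$ is likewise transported correctly: it is built out of $\delta_0$ (fixed by $\gamma$) and the $\Gamma$-equivariant splitting $\mathfrak{R}'\otimes\mathbb{Z}_p\cong\mathfrak{r}'_p\times\mathfrak{r}'_p$ via the map $x\mapsto x\delta_0^{-1}$ on $\xi^{-1}\mathfrak{r}'_p$.

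Finally, the function side. By construction $\phi_j(x,a,y,b)=\mathbf{N}(x)^{-1}\phi(x^{-1},a^{-1},y,b)[\mathfrak{U}_j^{-1}]$; substituting $\mathfrak{U}_j^\gamma$ for $\mathfrak{U}_j$ and using $\phi^\gamma=\phi$ (and the compatibility of the class field theory map $i$ with $\Gamma$) shows that the function attached to $\mathfrak{U}_j^\gamma$ is exactly the pull-back of $\phi_j$ along the isomorphism produced in the previous paragraph. The values of the two Eisenstein series then coincide. The main technical obstacle I expect is step two, namely verifying that the change of generator $\xi\mapsto\xi^\gamma$ really gives the \emph{same} polarization and not a polarization twisted by a scalar, and that the $p^\infty$-level structures truly match after the shift by $\xi^{-1}$ versus $(\xi^\gamma)^{-1}$; once these two identifications are pinned down, the rest is formal.
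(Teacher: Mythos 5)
There is a genuine gap at the central step. Your plan is to produce a ``canonical isomorphism'' between the two twisted quadruples by transporting the datum along $\gamma$ (the coordinate permutation on $\C^{\Sigma'}$ sending $\Sigma'(\mathfrak{U}_j\otimes\xi^{-1})$ to $\Sigma'(\mathfrak{U}_j^{\gamma}\otimes(\xi^{\gamma})^{-1})$) and then to quote the fact that $E_k(\cdot,\mathfrak{c}')$ only depends on the isomorphism class of the input. But that transport map is not an isomorphism in the category in which the Eisenstein series is a modular form: it is only $\gamma$-semilinear for the real multiplication, i.e.\ it intertwines $[r]$ with $[\gamma(r)]$, and it likewise conjugates the level structure and the polarization data by $\gamma$. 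Hence the invariance of the value under isomorphisms of triples does not apply; what the transport gives you for free is only the identity ``value of $E_k$ at the $\gamma$-conjugated triple $=$ value of the $\gamma$-conjugated Eisenstein series at the original triple.'' The whole content of the proposition is that this conjugated series has the \emph{same} values as the original one, and that is exactly what your proposal asserts rather than proves when you say the function attached to $\mathfrak{U}_j^{\gamma}$ ``is exactly the pull-back of $\phi_j$.'' Concretely, what has to be checked is that the partial Fourier transform satisfies $P\phi(w^{\gamma})=P\phi(w)$, which uses not only $\phi^{\gamma}=\phi$ but also that $\gamma$ permutes the finite set $X_{\alpha}$ and that the additive character $e_{F'}$ is Galois-invariant because it factors through the trace, together with $N(w^{\gamma})=N(w)$; after that one reindexes the defining sum $\sum_{w\in(\mathfrak{U}_j\otimes(\xi))(\ff p)/\rr^{\times}}P\phi(w)N(w)^{-k}|N(w)|^{-2s}$ by $w\mapsto w^{\gamma}$ and uses $\xi^{\gamma}/\xi\in{\rr'}^{\times}$ to identify the target lattice with the one attached to $\mathfrak{U}_j^{\gamma}$. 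This direct computation with the lattice-sum definition is the paper's proof, and it is the step your argument leaves out.

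A secondary point: your claim that the identification ``extends across the base since the moduli data are defined over the ring of integers of $W(\overline{\F}_p)$ on which $\Gamma$ acts'' is not justified. The group $\Gamma=\mathrm{Gal}(K'/K)$ does not act on that base in a way that implements the naive conjugation $\mathfrak{U}_j\mapsto\mathfrak{U}_j^{\gamma}$ of CM points; the Galois action on CM triples is governed by the main theorem of complex multiplication (reflex norms), which the paper invokes only for the Frobenius in the reciprocity-law proposition, not here. For the present statement no descent to integral models is needed: the equality of values is purely an identity of the transcendental lattice sums, and that is where the argument should be carried out.
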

\begin{proof}
The first thing that we note is that for $\phi$ with $\phi^\gamma =
\phi$ the following equality holds
\[
E_{k}(\phi,\mathfrak{c}^{\gamma}_j)(X({\mathfrak{U}_j^{\gamma}}^{(\xi)},\lambda^{\xi}_{\delta}(\mathfrak{U}^{\gamma}_j\otimes
\theta_{F'/F}^{-1}),\imath^{\xi}(\mathfrak{U}^{\gamma}_j \otimes
\theta_{F'/F}^{-1})))=E_{k}(\phi,\mathfrak{c}^{\gamma}_j)(X({\mathfrak{U}_j^{\gamma}}^{(\xi^\gamma)},\lambda^{\xi}_{\delta}(\mathfrak{U}^{\gamma}_j\otimes
\theta_{F'/F}^{-1}),\imath^{\xi}(\mathfrak{U}^{\gamma}_j \otimes
\theta_{F'/F}^{-1})))
\]
for all $\gamma \in \Gamma$. Indeed it is enough to observe that
$\frac{\xi^{\gamma}}{\xi}\in \mathfrak{R}^{\times}$ and hence we
have the equality of ideals $\mathfrak{U}^{\gamma}_j \otimes (\xi) =
\mathfrak{U}^{\gamma}_j \otimes (\xi^{\gamma})$. We now have from
the definition of the Eisenstein series
\[
E_k(\phi,\mathfrak{c}_j)(X({\mathfrak{U}_j}^{(\xi)},\lambda^{\xi}_{\delta}(\mathfrak{U}_j\otimes
\theta_{F'/F}^{-1}),\imath^{\xi}(\mathfrak{U}_j \otimes
\theta_{F'/F}^{-1})))=\]
\[
\frac{(-1)^{kg'}\Gamma(k+s)^{g'}}{\sqrt{(D_{F'})}}\sum_{w \in
(\mathfrak{U}_j \otimes (\xi))(\ff
p)/\rr^{\times}}\frac{P\phi(w)}{N(w)^k|N(w)^{2s}|}\mid_{s=0}
\]
As we assume that $\phi^\gamma=\phi$ for all $\gamma \in \Gamma$ we
have that $P\phi(w^{\gamma})=P\phi(w)$. Indeed from the definition
of the partial Fourier transform we have
\[
P\phi(x,y)=p^{\alpha[F':\Q]N(\mathfrak{f})^{-1}} \sum_{a \in
X_{\alpha}}\phi(a,y)e_{F'}(ax)
\]
for $\phi$ factoring through $X_\alpha \times \mathfrak{r'}_p \times
(\mathfrak{r'}/\mathfrak{f})$ with $X_\alpha:=
\mathfrak{r'}_p/\alpha r'_p \times (\mathfrak{r'}/\mathfrak{f})$
with $\alpha \in \mathbb{N}$. But then
\[
P\phi(x^{\gamma},y^{\gamma})= p^{\alpha[F':\Q]N(\mathfrak{f})^{-1}}
\sum_{a \in X_{\alpha}}\phi(a,y^{\gamma})e_{F'}(ax^{\gamma})
\]
As $\gamma$ permutes $X_\alpha$ we have
\[
\sum_{a \in X_{\alpha}}\phi(a,y^{\gamma})e_{F'}(ax^{\gamma}) =
\sum_{a \in
X_{\alpha}}\phi(a^{\gamma},y^{\gamma})e_{F'}(a^{\gamma}x^{\gamma})=\sum_{a
\in X_{\alpha}}\phi(a,y)e_{F'}(ax)
\]
which concludes our claim.

Back to our considerations we have that
\[
\sum_{w \in (\mathfrak{U}_j \otimes (\xi))(\ff
p)/\rr^{\times}}\frac{P\phi(w)}{N(w)^k|N(w)^{2s}|}\mid_{s=0} =
\sum_{w \in (\mathfrak{U}_j \otimes (\xi))(\ff
p)/\rr^{\times}}\frac{P\phi(w^{\gamma})}{N(w^{\gamma})^k|N(w^{\gamma})^{2s}|}\mid_{s=0}
\]
But the last sum is equal to $\sum_{w \in (\mathfrak{U}^{\gamma}_j
\otimes (\xi^{\gamma}))(\ff
p)/\rr^{\times}}\frac{P\phi(w)}{N(w)^k|N(w)^{2s}|}\mid_{s=0}$ which
concludes the proof.
\end{proof}

We know consider the measure $\mu_{\psi',\delta,\xi}^{KHT}$. We
recall that $\psi'$ is a Gr\"{o}ssencharacter of type $1\Sigma$. We
write $\psi'_{finite}$ for its finite part. Then we define introduce
the notation for a locally constant function $\phi_j$,
\[
E_{\psi'}(\phi_j,\mathfrak{c}_j)(X(\mathfrak{U}^{\xi}_j),\lambda^{\xi}_{\delta}(\mathfrak{U}_j\otimes
\theta_{F'/F}^{-1}),\imath^{\xi}(\mathfrak{U}_j \otimes
\theta_{F'/F}^{-1})):=E_{1}(\phi_j\psi'_{finite,j},\mathfrak{c}_j)(X(\mathfrak{U}^{\xi}_j),\lambda^{\xi}_{\delta}(\mathfrak{U}_j\otimes
\theta_{F'/F}^{-1}),\imath^{\xi}(\mathfrak{U}_j \otimes
\theta_{F'/F}^{-1}))
\]
Moreover we define the subset $S$ of the selected representatives of
$Cl_{K'}^{-}(\mathfrak{J})$ as the set of ideals that represent
classes in $Cl_{K'}^{-}(\mathfrak{J})^{\Gamma}$ but not in
$Im(\rho)$.
\begin{corollary}For the twisted $KHT$-measure we have the
congruences
\[
\int_{G'}\phi(g) \mu_{\psi',\delta,\xi}^{KHT}(g)\equiv
\sum_{\mathfrak{U}_j \in Im(\rho)}
E_{\psi'}(\phi_j,\mathfrak{c}_j)(X(\mathfrak{U}^{\xi}_j),\lambda^{\xi}_{\delta}(\mathfrak{U}_j\otimes
\theta_{F'/F}^{-1}),\imath^{\xi}(\mathfrak{U}_j \otimes
\theta_{F'/F}^{-1}))\]
\[
+ \sum_{\mathfrak{U}_j \in S}
E_{\psi'}(\phi_j,\mathfrak{c}_j)(X(\mathfrak{U}^{\xi}_j),\lambda^{\xi}_{\delta}(\mathfrak{U}_j\otimes
\theta_{F'/F}^{-1}),\imath^{\xi}(\mathfrak{U}_j \otimes
\theta_{F'/F}^{-1})) \mod{p}
\]
for all $\Z_p$-valued locally constant functions $\phi$ of $G'$ such
that $\phi^{\gamma}=\phi$ for all $\gamma \in \Gamma$.
\end{corollary}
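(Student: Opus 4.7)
The plan is to split the sum defining $\int_{G'}\phi(g)\,d\mu_{\psi',\delta,\xi}^{KHT,tw}$ over the chosen representatives $\{\mathfrak{U}_j\}$ of $Cl^{-}_{K'}(\mathfrak{J})$ according to the $\Gamma$-action on this class group, and to show that only the $\Gamma$-fixed classes --- i.e.\ exactly those represented by $Im(\rho) \sqcup S$ --- contribute modulo $p$; all other orbits cancel out.

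First, I would partition the representatives into $\Gamma$-orbits. Since $|\Gamma| = p$ is prime, each orbit has length $1$ or $p$; the length-$1$ orbits constitute $Cl^{-}_{K'}(\mathfrak{J})^{\Gamma}$, which by the choice of representatives in the set-up equals $Im(\rho) \sqcup S$. The length-$p$ orbits consist of representatives $\mathfrak{U}_j$ whose class is not $\Gamma$-fixed; by the way the representatives were chosen, all $p$ elements of each such orbit appear as distinct representatives in the sum defining the measure. Invoking the previous proposition, for $\phi^{\gamma}=\phi$ and $\gamma\in\Gamma$ one has
\[
E_{\psi'}(\phi_{j^\gamma},\mathfrak{c}_{j^\gamma})\bigl(X(\mathfrak{U}^{\xi}_{j^\gamma}),\lambda^{\xi}_{\delta}(\mathfrak{U}^{\gamma}_j\otimes\theta_{F'/F}^{-1}),\imath^{\xi}(\mathfrak{U}^{\gamma}_j\otimes\theta_{F'/F}^{-1})\bigr) = E_{\psi'}(\phi_j,\mathfrak{c}_j)\bigl(X(\mathfrak{U}^{\xi}_j),\lambda^{\xi}_{\delta},\imath^{\xi}\bigr),
\]
where I also use that $\psi_{K'}$ is $\Gamma$-invariant since it is a base-change from $K$. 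Hence the $p$ summands of the twisted $KHT$-measure attached to a length-$p$ orbit are all equal, and their total is $p$ times a single term. Since $\phi$ is $\Z_p$-valued and, by the $p$-integrality remark following Proposition \ref{q-expansion}, each such Eisenstein value is $p$-integral, every length-$p$ orbit contributes an element of $p\Z_p$. Subtracting these from the full measure leaves precisely the two sums over $Im(\rho)$ and $S$ in the statement.

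The main delicate point that I would have to verify is that the auxiliary function $\phi_j$ attached to $\mathfrak{U}_j$ and the function $\phi_{j^\gamma}$ attached to $\mathfrak{U}^{\gamma}_j$ are related by $\phi_{j^\gamma} = \phi_j^{\gamma}$ as locally constant functions on $T'$; this is what makes the previous proposition (stated for globally $\Gamma$-invariant test functions) applicable to the actual summands indexed by the chosen representatives. The identity should follow from the $\Gamma$-equivariance of the class-field map $i$ that defines $\tilde\phi_j(x) = \phi(x[\mathfrak{U}_j^{-1}])$ combined with $\phi^{\gamma}=\phi$. Once this bookkeeping is in place, everything reduces to the orbit-counting sketched above.
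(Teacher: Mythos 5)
Your proposal is correct and follows essentially the same route as the paper: the paper's proof is the one-line remark that the claim ``follows directly from the fact that $|\Gamma|=p$ and that $\phi^{\gamma}=\phi$'', i.e.\ exactly your orbit decomposition in which the non-$\Gamma$-fixed classes fall into orbits of length $p$ whose $p$ equal, $p$-integral Eisenstein values (equal by the preceding proposition) contribute $0$ modulo $p$, leaving the terms indexed by $Im(\rho)\sqcup S$. The compatibility $\phi_{j^{\gamma}}=\phi_j^{\gamma}$ that you flag is precisely the bookkeeping the paper leaves implicit when invoking that proposition, and your justification of it is the intended one.
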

\begin{proof} It follows directly from the fact that $|\Gamma|=p$
and that $\phi^{\gamma}=\phi$ for all $\gamma \in \Gamma$.
\end{proof}
Our next aim is to prove the following proposition
\begin{proposition} Under our assumption, for all $\Z_p$-valued locally constant $\phi$ with $\phi^{\gamma}=\phi$ for all $\gamma \in \Gamma$, we have the congruences
\[
\Phi(\int_{G}(\phi \circ ver)(g) \mu_{\psi^{p},\delta}^{KHT}(g))
\equiv \sum_{\mathfrak{U}_j \in Im(\rho)}
E_{\psi'}(\phi_j,\mathfrak{c}_j)(X(\mathfrak{U}^{\xi}_j),\lambda^{\xi}_{\delta}(\mathfrak{U}_j\otimes
\theta_{F'/F}^{-1}),\imath^{\xi}(\mathfrak{U}_j \otimes
\theta_{F'/F}^{-1})) \mod{p}
\]
where $\Phi$ was the extension of the Frobenious element from its
action on $\Q^{nr}_p$ to its $p$-adic completion
$\mathcal{J}_\infty$.
\end{proposition}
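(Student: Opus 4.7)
The strategy is to unfold the left-hand side into a sum of Eisenstein-series values at CM points over $K$, push the Frobenius $\Phi$ through using the reciprocity law on CM points (Proposition \ref{reciprocityP}), apply the mod-$p$ congruence of Eisenstein series (Proposition \ref{CongruencesP}) to translate to the $F'$-side via the diagonal embedding, and finally identify each resulting summand geometrically with the twisted Katz--Hida--Tilouine Eisenstein term at the HBAV $X(\mathfrak{U}_j\otimes\theta^{-1}_{F'/F})$, which is precisely the summand of $\mu^{KHT,tw}_{\psi',\delta,\xi}$ indexed by the image of $\mathfrak{U}_j$ under $\rho$.

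The first step is to write, by the definition of $\mu^{KHT}_{\psi^p,\delta}$ and equation (5.5.7) of \cite{Katz2},
\[
\int_{G_K}(\phi\circ ver)(g)\,d\mu^{KHT}_{\psi^{p},\delta}(g)=\sum_{j}E_{p}\bigl(((\phi\circ ver)\hat\psi_K^{-p})_{finite,j},\mathfrak{c}_j\bigr)(X(\mathfrak{U}_j),\lambda(\mathfrak{U}_j),\imath(\mathfrak{U}_j)),
\]
where $\mathfrak{U}_j$ runs over a set of representatives of $Cl^{-}_K(\mathfrak{J})$. Apply $\Phi$ and pull it inside the sum, using Proposition \ref{reciprocityP} to replace $\Phi$ on each CM-value by the geometric Frobenius $Frob_p$ acting on the Eisenstein series itself.

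Next, invoke Proposition \ref{CongruencesP} with $k=1$. Since $(\phi\circ ver)\hat\psi_K^{-p}$ is the transfer pull-back of the $\Gamma$-invariant function $\phi\hat\psi_{K'}^{-1}$ (using $\hat\psi_{K'}=\hat\psi_K\circ N_{K'/K}$ and the fact that $N_{K'/K}\circ ver$ is multiplication by $p$), the congruence gives modulo $p$
\[
\Phi\!\left(\int_{G_K}(\phi\circ ver)\,d\mu^{KHT}_{\psi^{p},\delta}\right)\equiv\sum_{j}\bigl(res_\Delta E_1\bigl((\phi\hat\psi_{K'}^{-1})_{finite,j},\mathfrak{c}_j\theta_{F'/F}\bigr)\bigr)(X(\mathfrak{U}_j),\lambda(\mathfrak{U}_j),\imath(\mathfrak{U}_j)).
\]
Now Lemma \ref{diagonalmapL} together with the subsequent lemma rewrites each restricted evaluation over $K$ as an honest evaluation over $K'$ on the HBAV $X(\mathfrak{U}_j\otimes\theta^{-1}_{F'/F})$ with CM by $\mathfrak{R}'$. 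The polarization $\lambda^{\xi}_\delta$ and the twisted $p^{\infty}\ff^{2}$-level structure $\imath^{\xi}$ introduced in Section~4 were set up precisely so that the triple $(X(\mathfrak{U}^{\xi}_j),\lambda^{\xi}_\delta,\imath^{\xi})$ coincides with the diagonal image of $(X(\mathfrak{U}_j),\lambda(\mathfrak{U}_j),\imath(\mathfrak{U}_j))$ (compare the polarization $\theta_{F'}^{-1}\mathfrak{c}^{-1}\mathfrak{U}\mathfrak{U}^{c}\theta_{F'/F}^{-1}$ arising on both sides and the twisted $p$-adic trivialization $x\mapsto x\delta_0^{-1}$). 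Hence each summand equals by definition $E_{\psi'}(\phi_j,\mathfrak{c}_j)(X(\mathfrak{U}^{\xi}_j),\lambda^{\xi}_\delta,\imath^{\xi})$.

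Finally, by assumption (i) the map $\rho:Cl^{-}_K(\mathfrak{J})\to Cl^{-}_{K'}(\mathfrak{J})^{\Gamma}$ is an isomorphism, so the representatives $\mathfrak{U}_j$ of $Cl^{-}_K(\mathfrak{J})$ biject with representatives of $Im(\rho)\subset Cl^{-}_{K'}(\mathfrak{J})$, producing exactly the sum on the right-hand side. The main obstacle is the geometric identification in the third paragraph: one must carefully check that the polarization induced by the common choice of $\delta\in K\hookrightarrow K'$ on $X(\mathfrak{U}_j)\otimes_{\mathfrak{r}}\theta^{-1}_{F'/F}$, together with the twisted $p^{\infty}$-level structure defined using multiplication by $\xi^{-1}$ (equivalently, by $\delta_0^{-1}$ on $\frac{1}{\xi}\rr'_p$), yields the same triple $(X(\mathfrak{U}^{\xi}_j),\lambda^{\xi}_\delta,\imath^{\xi})$ appearing on the right. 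A secondary, bookkeeping-type point is the compatibility of finite parts $((\phi\circ ver)\hat\psi_K^{-p})_{finite}=(\phi\hat\psi_{K'}^{-1})_{finite}$ under the transfer, which rests on the factorisation $\psi_{K'}=\psi_K\circ N_{K'/K}$ and the fact that $\psi_K$ is unramified above $p$.
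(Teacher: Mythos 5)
Your proposal is correct and follows essentially the same route as the paper's own proof: unfold the integral into Eisenstein values at the CM triples $X(\mathfrak{U}_j)$, use $\psi'\circ ver=\psi^p$, combine Proposition \ref{CongruencesP} with the diagonal/twisted-triple identification, invoke Proposition \ref{reciprocityP} (the Main Theorem of Complex Multiplication) to trade $Frob_p$ for $\Phi$, and finish with the injectivity of $\rho$. The only difference is the order in which you apply $\Phi$ versus the Eisenstein congruence, which is immaterial, and your flagging of the twisted-triple identification as the delicate point matches where the paper itself is briefest.
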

\begin{proof}
By definition we have that
\[
\int_{G}(\phi\circ ver)(g) \mu_{\psi^p,\delta}^{KHT}(g) = \sum_j
E_{(\psi^p)}(\phi\circ
ver_j,\mathfrak{c}_j)(X(\mathfrak{U}_j),\lambda_{\delta}(\mathfrak{U}_j),\imath(\mathfrak{U}_j)
\]
where the sum runs over a set of representatives of
$Cl_K^{-}(\mathfrak{J})$ and
\[
E_{(\psi^p)}(\phi\circ
ver_j,\mathfrak{c}_j)(X(\mathfrak{U}_j),\lambda_{\delta}(\mathfrak{U}_j),\imath(\mathfrak{U}_j):=E_{p}(\phi
\psi_{finite}'\circ
ver_j,\mathfrak{c}_j)(X(\mathfrak{U}_j),\lambda_{\delta}(\mathfrak{U}_j),\imath(\mathfrak{U}_j)
\]
where we note that $\psi' \circ ver = \psi^p$ as $\psi'=\psi \circ
N_{K'/K}$. From the congruences between the Eisenstein series that
we have proved in Proposition \ref{CongruencesP} we have that
\[
Frob_p(E_{\psi^p}(\phi\circ
ver)_j,\mathfrak{c}_j)(X(\mathfrak{U}_j),\lambda_{\delta}(\mathfrak{U}_j),\imath(\mathfrak{U}_j))
\equiv
E_{\psi'}(\phi_j,\mathfrak{c}_j)(X(\mathfrak{U}^{\xi}_j),\lambda^{\xi}_{\delta}(\mathfrak{U}_j\otimes
\theta_{F'/F}^{-1}),\imath^{\xi}(\mathfrak{U}_j \otimes
\theta_{F'/F}^{-1}))
\]
where of course in the right hand side $\mathfrak{U}_j$ is
understood as $\mathfrak{U}_j\mathfrak{R}'$. We sum over all
representatives of $Cl_K^{-}(\mathfrak{J})$ and after using the Main
Theorem of Complex Multiplication and our assumption that $\rho$ is
injective we obtain
\[
\Phi(\int_{G}(\phi\circ ver)(g) \mu_{\psi^p,\delta}^{KHT}(g))\equiv
\]
\[\equiv
\sum_{\mathfrak{U}_j \in Im(\rho)}
E_{\psi'}(\phi_j,\mathfrak{c}_j)(X(\mathfrak{U}^{\xi}_j),\lambda^{\xi}_{\delta}(\mathfrak{U}_j\otimes
\theta_{F'/F}^{-1}),\imath^{\xi}(\mathfrak{U}_j \otimes
\theta_{F'/F}^{-1})) \mod{p}
\]
\end{proof}

\begin{lemma} Let $\phi$ be a locally constant $\Z_p$-valued function
of $G_K$ that is cyclotomic i.e. $\phi$ is the restriction to $G_K$
of a locally constant function on $G_F$. Then we have that
\[
\frac{\int_{G}\phi(g)
\mu_{\psi^k,\delta}^{KHT}(g)}{\Omega_p(E)^{gk}} \in \Z_p
\]
for all $k \in \mathbb{N}$
\end{lemma}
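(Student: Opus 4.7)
The plan is to reduce to the single-character case already handled by Lemma \ref{rationalityL} and then to descend from $\Z_p[\chi]$-rationality to $\Z_p$ via a Galois-equivariance argument. First, I would expand $\phi = \sum_\chi c_\chi \chi$ as a finite $\bar\Q_p$-linear combination of finite order characters of $G_K$. Since $\phi$ is pulled back from $G_F$, only cyclotomic characters (those satisfying $\chi^\tau=\chi$) can appear with nonzero coefficient. Since $\phi$ is $\Z_p$-valued, the coefficients are Galois-equivariant in the sense that $c_{\chi^\sigma}=c_\chi^\sigma$ for every $\sigma \in \mathrm{Gal}(\bar\Q_p/\Q_p)$ acting on the finite set of characters appearing.

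Second, I would establish the exact weight-$k$ analog of Lemma \ref{rationalityL} together with its Galois-equivariant refinement: for every cyclotomic finite order character $\chi$ of $G_K$,
\[
\frac{\int_{G_K}\chi\,d\mu^{KHT}_{\psi^k,\delta}}{\Omega_p(E)^{gk}} \in \Z_p[\chi],
\qquad
\left(\frac{\int_{G_K}\chi\,d\mu^{KHT}_{\psi^k,\delta}}{\Omega_p(E)^{gk}}\right)^{\sigma} =
\frac{\int_{G_K}\chi^\sigma\,d\mu^{KHT}_{\psi^k,\delta}}{\Omega_p(E)^{gk}}.
\]
The argument runs verbatim as in the proof of Lemma \ref{rationalityL}, replacing $\psi_K$ by $\psi_K^k$ throughout: one uses that $\tilde\chi$ (the base change of $\chi$ to $F$) is a finite character of $G_F$, that automorphic induction of $\psi_K^k$ from $K$ to $F$ yields the Hilbert cusp form $f_{\psi_K^k}$ of parallel weight $k$, that $L(0,\chi\psi_K^{-k})=L(k,f_{\psi_K^k},\tilde\chi^{-1})$ because $\chi$ is cyclotomic, that $\mathrm{Local}(\Sigma,\chi\psi_K^{-k},\delta)$ agrees with the global epsilon factor $e(\tilde\chi^{-1})$ modulo $K(\chi)^\times$ (using that $\psi_K$ is unramified above $p$ and that $\prod_{\mathfrak{q}\mid\mathfrak{f}_{\psi_K}}e_\mathfrak{q}(\psi_K)=\pm 1$ is the sign of the functional equation), and finally that Hida's rationality/equivariance statement (Theorem~I of \cite{Hida3}) applies to the twisted central values $L(k,f_{\psi_K^k},\tilde\chi^{-\sigma})/\sqrt{D_F}\,\Omega_\infty(E)^{gk}$.

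Finally, I would assemble the pieces. Summing $\phi=\sum c_\chi\chi$ termwise, combining the equivariance of the second step with $c_{\chi^\sigma}=c_\chi^\sigma$, the element
\[
I(\phi):=\frac{1}{\Omega_p(E)^{gk}}\int_{G_K}\phi\,d\mu^{KHT}_{\psi^k,\delta}
\]
is fixed by every $\sigma\in\mathrm{Gal}(\bar\Q_p/\Q_p)$ and therefore lies in $\Q_p$. On the other hand $I(\phi)$ is $p$-integral, since the KHT-measure is constructed by evaluating Eisenstein series with $p$-integral $q$-expansions on CM HBAV defined over the ring of integers of $W(\bar\F_p)$, and $\phi$ is $\Z_p$-valued. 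Therefore $I(\phi)\in \Q_p \cap \mathcal{O}_{\bar\Q_p}=\Z_p$, as claimed. The main obstacle is the second step: while the $\Z_p[\chi]$-rationality is a routine adaptation of Lemma \ref{rationalityL}, its Galois-equivariant refinement relies on the ``full force'' of Hida's results in \cite{Hida3}, and one must verify that his statement for twisted central values of $f_{\psi_K^k}$ is uniform in the weight $k$, with matching CM period and epsilon-factor bookkeeping at ramified primes.
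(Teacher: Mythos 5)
Your proposal is correct and follows essentially the same route as the paper: expand $\phi=\sum_\chi c_\chi\chi$ into cyclotomic characters, invoke the Galois-equivariance of $\frac{1}{\Omega_p(E)^{gk}}\int_{G}\chi\,d\mu^{KHT}_{\psi^k,\delta}$ under $\mathrm{Gal}(\bar\Q_p/\Q_p)$ (the weight-$k$ analogue of Lemma \ref{rationalityL} and the discussion following it, resting on Hida's rationality results), and conclude that the integral is Galois-fixed, hence in $\Q_p$, and then in $\Z_p$ by integrality of the measure. You merely make explicit two points the paper leaves implicit --- the adaptation of Lemma \ref{rationalityL} to $\psi_K^k$ (where minor bookkeeping such as the parallel weight being $k+1$ rather than $k$ should be fixed) and the final step $\Q_p\cap\mathcal{O}_{\bar\Q_p}=\Z_p$ --- which is consistent with, not divergent from, the paper's argument.
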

\begin{proof}This follows almost directly Lemma \ref{rationalityL} and the discussion after it. Indeed we
may write $\phi=\sum_{\chi}c_\chi \chi$ where $\chi$ are cyclotomic
i.e. $\chi \circ c = \chi$. For such characters it is known that for
all $\sigma \in Gal(\bar{\Q_p}/\Q_p)$ we have
\[
\left(\frac{\int_{G}\chi(g)
\mu_{\psi^k,\delta}^{KHT}(g)}{\Omega_p(E)^{gk}}\right)^{\sigma} =
\frac{\int_{G}(\chi(g))^{\sigma}
\mu_{\psi^k,\delta}^{KHT}(g)}{\Omega_p(E)^{gk}}
\]
For all $\sigma \in G_{\Q_p}$ and $\phi$'s cyclotomic we have
\[
\left(\frac{\int_{G}\phi(g)
\mu_{\psi^k,\delta}^{KHT}(g)}{\Omega_p(E)^{gk}}\right)^{\sigma}
=\sum_{\chi}c_{\chi}^{\sigma}\left(\frac{\int_{G}\chi(g)
\mu_{\psi^k,\delta}^{KHT}(g)}{\Omega_p(E)^{gk}}\right)^{\sigma}=
\sum_{\chi}c_{\chi}^{\sigma}\frac{\int_{G}(\chi(g))^{\sigma}
\mu_{\psi^k,\delta}^{KHT}(g)}{\Omega_p(E)^{gk}}
\]
But then as $\phi(g)=(\phi(g))^{\sigma} =
\sum_{\chi}c_{\chi}^{\sigma} \chi(g)^\sigma$ the last sum is equal
to $\frac{\int_{G}\phi(g)
\mu_{\psi^k,\delta}^{KHT}(g)}{\Omega_p(E)^{gk}}$ which finishes the
proof.
\end{proof}

Note that a direct corollary of the proposition is
\begin{corollary}If $\phi$ is cyclotomic then,
\[
\int_{G'}\phi(g) \mu_{\psi',\delta,\xi}^{KHT}(g) - u^g\int_{G}(\phi
\circ ver)(g) \mu_{\psi^{p},\delta}^{KHT}(g) \equiv
\]
\[
\equiv \sum_{\mathfrak{U}_j \in S}
E_{\psi'}(\phi_j,\mathfrak{c}_j)(X(\mathfrak{U}^{\xi}_j),\lambda^{\xi}_{\delta}(\mathfrak{U}_j\otimes
\theta_{F'/F}^{-1}),\imath^{\xi}(\mathfrak{U}_j \otimes
\theta_{F'/F}^{-1})) \mod{p}
\]
\end{corollary}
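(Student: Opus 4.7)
The plan is to combine the previous corollary with the preceding proposition, and then to convert the Frobenius twist $\Phi$ into multiplication by $u^g$ by invoking the integrality lemma together with Fermat's little theorem.

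First, subtracting the congruence of the preceding proposition from that of the previous corollary eliminates the sum indexed by $Im(\rho)$, and reduces the claim to the single congruence
\[
\Phi(I) \equiv u^g I \pmod{p}, \qquad I := \int_{G} (\phi \circ ver)(g)\, \mu^{KHT}_{\psi^{p},\delta}(g).
\]
Once this is in hand, plugging it into the two displays coming from the previous corollary and the preceding proposition immediately gives the stated congruence, with the error indexed by $\mathfrak{U}_j \in S$.

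Next, I would apply the lemma on integrality of cyclotomic integrals to $I$, with $k = p$. For this one first checks that $\phi \circ ver$ is cyclotomic on $G_K$: this is a formal consequence of the functoriality of the transfer $G_K \to G_{K'}$, which commutes with complex conjugation $\tau$ and with the base-change maps from $G_F$ and $G_{F'}$. Concretely, if $\phi$ is the base change to $G_{K'}$ of some function $\phi'$ on $G_{F'}$, then $\phi \circ ver$ is the base change to $G_K$ of $\phi' \circ ver$, which is a function on $G_F$; hence $(\phi \circ ver)^{\tau} = \phi \circ ver$. The lemma then produces $I = \Omega_p(E)^{gp}\, B$ with $B \in \Z_p$.

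Then I would compute $\Phi(I)$ directly. Since $\Phi$ fixes $\Z_p$ and satisfies $\Omega_p(E)^{\Phi} = u\,\Omega_p(E)$, one obtains
\[
\Phi(I) \;=\; \Phi(\Omega_p(E))^{gp}\, \Phi(B) \;=\; u^{gp}\, \Omega_p(E)^{gp}\, B \;=\; u^{gp}\, I.
\]
Finally, $u \in \Z_p^{\times}$ and Fermat's little theorem give $u^{p-1} \equiv 1 \pmod{p}$, hence $u^{gp} = u^{g}\, (u^{p-1})^{g} \equiv u^{g} \pmod{p}$, and therefore $\Phi(I) \equiv u^{g} I \pmod{p\mathcal{J}_{\infty}}$, as required.

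The only delicate point is the cyclotomicity of $\phi \circ ver$ (needed to invoke the integrality lemma) and the verification that the residual congruence is taken in the right ring $\mathcal{J}_{\infty}$ rather than just $\Z_p$; both are essentially bookkeeping. All the genuine arithmetic content has already been deposited in the previous corollary, the preceding proposition identifying $\Phi(I)$ with the $Im(\rho)$-sum, and the integrality statement for cyclotomic integrals of the $KHT$-measure.
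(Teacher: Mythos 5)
Your proposal is correct and follows essentially the same route as the paper: combine the previous corollary and proposition to isolate the term $\Phi\left(\int_{G}(\phi\circ ver)\, d\mu^{KHT}_{\psi^{p},\delta}\right)$, use the integrality lemma for cyclotomic functions to write this integral as $\Omega_p(E)^{gp}$ times an element of $\Z_p$, deduce $\Phi(I)=u^{gp}I$, and reduce $u^{gp}$ to $u^{g}$ modulo $p$ (the paper uses $u^{p}\equiv u$, you use $u^{p-1}\equiv 1$ for the unit $u$, which is the same point). Your explicit check that $\phi\circ ver$ is cyclotomic is bookkeeping the paper leaves implicit, and does not change the argument.
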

\begin{proof} We have
\[
\Phi\left(\int_{G}(\phi\circ ver)(g)
\mu_{\psi^p,\delta}^{KHT}(g)\right)=\Phi\left(\Omega_p(E)^{gp}\frac{\int_{G}(\phi\circ
ver)(g)
\mu_{\psi^p,\delta}^{KHT}(g)}{\Omega_p(E)^{gp}}\right)=u^{gp}\int_{G}(\phi\circ
ver)(g) \mu_{\psi^p,\delta}^{KHT}(g)
\]
as $\frac{\Omega_p(E)^{\Phi}}{\Omega_p(E)}=u$ and from the
assumption on $\phi$ we have that $\frac{\int_{G}(\phi\circ ver)(g)
\mu_{\psi^p,\delta}^{KHT}(g)}{\Omega_p(E)^{gp}} \in \Z_p$. But as
$u:=\psi_0(\bar{\pi}) \in \Z_p$ we have $u^p \equiv u \mod{p}$.
\end{proof}

\begin{lemma}We have the congruences
\[
u^g\frac{\int_{G}\phi(g)
\mu_{\psi^p,\delta}^{KHT}(g)}{\Omega_p(E)^{gp}} \equiv
\frac{\int_{G}\phi(g) \mu_{\psi,\delta}^{KHT}(g)}{\Omega_p(E)^{g}}
\mod{p}
\]
for all locally constant $\Z_p$-valued functions $\phi$ of $G$.
\end{lemma}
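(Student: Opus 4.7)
The plan is to reduce the congruence to two elementary facts: a pointwise Fermat-type identity for the $p$-adic avatar $\hat{\psi}_K$, and the defining Frobenius property of the $p$-adic period $\Omega_p(E)$. First, by linearity in $\phi$ it is enough to deal with the raw integrands, and by unwinding the definition of $\mu^{KHT}_{\psi^k,\delta}$ for $k\in\{1,p\}$,
\[
\int_G \phi\, d\mu^{KHT}_{\psi^k,\delta} \;=\; \int_G \phi(g)\,\hat{\psi}_K(g)^{-k}\, d\mu^{KHT}_\delta(g),
\]
the question becomes the comparison of the integrals of $\phi\hat{\psi}_K^{-p}$ and $\phi\hat{\psi}_K^{-1}$ against the single measure $\mu^{KHT}_\delta$, weighted by the two different period normalisations.

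Next I would observe that the $p$-adic avatar $\hat{\psi}_K$ takes values in $\Z_p^\times$. This uses that $\psi_{K_0}$ takes values in $K_0$ and that the fixed embedding $K_0\hookrightarrow\bar{\Q}_p$ identifies $\mathfrak{R}_{0,\p}^\times$ with $\Z_p^\times$ (recall $p$ splits in $K_0$); the base changed $\psi_K=\psi_{K_0}\circ N_{K/K_0}$ inherits this integrality. Fermat's little theorem on $\Z_p$ then gives $a^p\equiv a\pmod p$ for every $a\in\Z_p^\times$, hence $\hat{\psi}_K(g)^{-p}\equiv\hat{\psi}_K(g)^{-1}\pmod{p}$ pointwise in $g$. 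Consequently the locally constant function $\phi\bigl(\hat{\psi}_K^{-p}-\hat{\psi}_K^{-1}\bigr)$ is $p\Z_p$-valued, so integrating against $\mu^{KHT}_\delta$ (which lies in $W:=W(\bar{\F}_p)$) produces an element of $pW$, i.e.
\[
\int_G \phi\, d\mu^{KHT}_{\psi^p,\delta}\;\equiv\;\int_G \phi\, d\mu^{KHT}_{\psi,\delta}\pmod{pW}.
\]

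Finally I would invoke the defining property of $\Omega_p(E)$: the Frobenius $\Phi$ on $W$ is characterised as the unique ring endomorphism lifting the $p$-th power map modulo $p$, so $\Omega_p(E)^p\equiv\Omega_p(E)^\Phi\pmod{pW}$, which combined with $\Omega_p(E)^\Phi=u\,\Omega_p(E)$ gives $\Omega_p(E)^{pg}\equiv u^g\Omega_p(E)^g\pmod{pW}$. Since $u\in\Z_p^\times$ and $\Omega_p(E)\in W^\times$ are units this inverts to $u^g/\Omega_p(E)^{pg}\equiv 1/\Omega_p(E)^g\pmod{pW}$, and multiplying this congruence by the preceding one yields the lemma; the congruence lives a priori in $\mathcal{J}_\infty/p\mathcal{J}_\infty$, and for $\phi$ in the cyclotomic part both sides lie in $\Z_p$ by Lemma \ref{rationalityL}, so it descends to $\Z_p/p\Z_p$ as needed for the applications above. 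The only real subtle point is the integrality claim $\hat{\psi}_K(g)\in\Z_p^\times$, which is precisely where the hypothesis that $E$ is defined over $\Q$ (and hence $p$ splits in the CM field $K_0$) enters; everything else is formal manipulation.
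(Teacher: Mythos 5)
Your argument is correct and is essentially the paper's own proof: both rest on the pointwise congruence $\hat{\psi}_K^{p}\equiv\hat{\psi}_K \pmod{p}$ (Fermat, using that the avatar is $\Z_p^{\times}$-valued) applied under the integral against $\mu_{\delta}^{KHT}$, together with the fact that Frobenius reduces to the $p$-th power map mod $p$, so that $u^{g}\Omega_p(E)^{g}\equiv\Omega_p(E)^{pg}\pmod{p}$ and one may divide by these units. Your extra remarks on the integrality of $\hat{\psi}_K$ and on where the congruence lives merely make explicit what the paper leaves implicit.
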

\begin{proof}As $\psi^p \equiv \psi \mod{p}$ we have that
\[
\int_{G}\phi(g) \mu_{\psi^p,\delta}^{KHT}(g) =
\int_{G}\phi(g)\psi^p\, \mu_{\delta}^{KHT}(g) \equiv
\int_{G}\phi(g)\psi\, \mu_{\delta}^{KHT}(g)= \int_{G}\phi(g)
\mu_{\psi,\delta}^{KHT}(g) \mod{p}
\]
Dividing by the unit $\Omega_p(E)^{pg}$ and observing that
$u=\frac{\Omega_p(E)^\Phi}{\Omega_p(E)}\equiv
\frac{\Omega_p(E)^p}{\Omega_p(E)} \mod{p}$ we have
\[
\frac{\int_{G}\phi(g)
\mu_{\psi^p,\delta}^{KHT}(g)}{\Omega_p(E)^{gp}} \equiv
\frac{\int_{G}\phi(g)
\mu_{\psi,\delta}^{KHT}(g)}{\Omega_p(E)^{g}}\times
\frac{\Omega_p(E)^g}{\Omega_p(E)^{pg}} \mod{p}
\]
which concludes the proof.
\end{proof}

Now our assumptions of the main theorem imply that $S = \emptyset$.
Then the last two statements conclude the proof of the main theorem.
Note that if we do not assume that $S=\emptyset$ then we obtain the
congruences
\[
\int_{G_F}\epsilon \circ ver \,\,\,\, d\mu_{E/F} \equiv
\int_{G_{F'}}\epsilon\,\,\,\,\, d\mu_{E/F'} + \Delta(\epsilon)
\mod{p\Z_p}
\]
where
\[
\Delta(\epsilon):=\frac{1}{\Omega_p(E)^{pg}}\sum_{\mathfrak{U}_j \in
S}
E_{\psi'}(\phi_j,\mathfrak{c}_j)(X(\mathfrak{U}^{\xi}_j),\lambda^{\xi}_{\delta}(\mathfrak{U}_j\otimes
\theta_{F'/F}^{-1}),\imath^{\xi}(\mathfrak{U}_j \otimes
\theta_{F'/F}^{-1}))
\]

\textbf{The Fukaya-Kato conjecture and the measure of Katz:} We
would like to finish this work by stating the question of whether
the $p$-adic interpolation properties of the Katz-Hida-Tilouine
measure are canonical. In \cite{FK} (page 67, theorem 4.2.22) Fukaya
and Kato conjecture a general formula for $p$-adic $L$ functions for
motives over any field. Does this formula agree with
Katz-Hida-Tilouine's formula in the case where the motive consider
is the one attached to a Gr\"{o}ssencharacter over a CM field? We
remark that our question is more concerning the $p$-adic and
archimedean periods that appear in the two formulas.

\textbf{Acknowledgements:} The author would like to thank Mahesh
Kakde for many helpful discussions. Moreover he would like to thank
Peter Barth and Jakob Stix for answering various questions of the
author. Finally the encouragement of Prof. John Coates and Prof.
Otmar Venjakob was very important to the author.

\section{Appendix}
There is an easy way to see that there must be a modification in the
interpolation properties of the measures in order for the
congruences to hold. We assume for simplicity that $F'/F$ ramifies
only above $p$. Moreover we assume that the character $\psi_{K}$ is
unramified (we just divide out the finite part of it which has
conductor $\mathfrak{f}$) and we pick with notation as in the
introductio $\mathfrak{n}=\mathfrak{r}$. Let us pick as the locally
constant function $\epsilon$ that appear in the congruences the
character $\tilde{\phi}:=\phi \circ N_{K'/K}$ for some finite
$\Z_p^\times$-valued character of $G_K$, which we assume cyclotomic
(for example $\phi:=\mathbf{1}$ or some of the $p-1$ order
characters factorizing through the torsion of $G_F$ base changed to
$G_K$). Then by the interpolation properties of the measure we have
\[
\frac{\int_{G_{K'}}\tilde{\phi}\,\,\,\,\,
d\mu_{\psi_{K'}}}{\Omega_p(E)^{pg}}= \prod_{\mathfrak{p}\in
\Sigma'_p}Local_{\mathfrak{p}}(\tilde{\phi}\psi_{K'},\Sigma',\delta')(1-\tilde{\phi}\check{\psi}_{K'}(\bar{\mathfrak{p}}))(1-\tilde{\phi}\psi_{K'}(\bar{\mathfrak{p}}))\frac{L(0,\tilde{\phi}\psi_{K'})}{\sqrt{|D_{F'}|}\Omega(E)^{pg}}=
\]
\[
\frac{\prod_{\mathfrak{p}\in
\Sigma'_p}Local_{\mathfrak{p}}(\tilde{\phi}\psi_{K'},\Sigma',\delta')}{\sqrt{|D_{F'}|}}\prod_{\chi}\prod_{\mathfrak{p}\in
\Sigma_p}(1-\phi\check{\psi}_{K}\chi(\bar{\mathfrak{p}}))(1-\phi\psi_{K}\chi(\bar{\mathfrak{p}}))\frac{L(0,\phi\psi_{K}\chi)}{\Omega(E)^{g}}
\]
where $\chi$ runs over the characters of the extension $K'/K$. Now
we note that $\chi \equiv 1 \mod{(\zeta_p-1)}$ and hence as
$Gal(K'/K)$ is a quotient of $G_K$ we have that
\[
\frac{\int_{G_K}\phi\chi \,\,\,\,
d\mu_{\psi_K}}{\Omega_{p}(E)^g}\equiv \frac{\int_{G_K} \phi \,\,\,\,
d\mu_{\psi_K}}{\Omega_{p}(E)^g} \mod{(\zeta_p -1) }
\]
or equivalently
\[
\prod_{\mathfrak{p} \in
\Sigma_p}Local_{\mathfrak{p}}(\phi\chi\psi_K,\Sigma,\delta)\prod_{\mathfrak{p}\in
\Sigma_p}(1-\phi\check{\psi}_{K}\chi(\bar{\mathfrak{p}}))(1-\phi\psi_{K}\chi(\bar{\mathfrak{p}}))\frac{L(0,\phi\psi_{K}\chi)}{\sqrt{|D_F|}\Omega(E)^{g}}\equiv
\]
\[
\equiv \prod_{\mathfrak{p}\in
\Sigma_p}Local_{\mathfrak{p}}(\phi\psi_K,\Sigma,\delta)(1-\phi\check{\psi}_{K}(\bar{\mathfrak{p}}))(1-\phi\psi_{K}(\bar{\mathfrak{p}}))\frac{L(0,\phi\psi_{K})}{\sqrt{|D_F|}\Omega(E)^{g}}
\mod{(\zeta_p -1)}
\]
Taking the product over all $\chi$'s we obtain
\[
\frac{\sqrt{|D_{F'}|}}{\sqrt{|D_F|^p}}\frac{\prod_{\chi}(\prod_{\mathfrak{p}
\in
\Sigma_p}Local_{\mathfrak{p}}(\phi\chi\psi_K,\Sigma,\delta))}{\prod_{\mathfrak{p}\in
\Sigma'_p}Local_{\mathfrak{p}}(\tilde{\phi}\psi_{K'},\Sigma',\delta')}\frac{\int_{G_{K'}}\tilde{\phi}\,\,\,\,\,
d\mu_{\psi_{K'}}}{\Omega_p(E)^{pg}} \equiv \left(\frac{\int_{G_K}
\phi \,\,\,\, d\mu_{\psi_K}}{\Omega_{p}(E)^g}\right)^p
\mod{(\zeta_p-1)}
\]
Now we note that
\[
\left(\frac{\int_{G_K} \phi \,\,\,\,
d\mu_{\psi_K}}{\Omega_{p}(E)^g}\right)^p \equiv \frac{\int_{G_K}
\phi^p \,\,\,\, d\mu_{\psi_K}}{\Omega_{p}(E)^g} \mod{p}
\]
as the values of the integrals are in $\Z_p$ as we assume that
$\phi$ is cyclotomic. Hence we need to understand the factor
$\frac{\sqrt{|D_{F'}|}}{\sqrt{|D_F|^p}}\frac{\prod_{\chi}(\prod_{\mathfrak{p}
\in
\Sigma_p}Local_{\mathfrak{p}}(\phi\chi\psi_K,\Sigma,\delta))}{\prod_{\mathfrak{p}\in
\Sigma'_p}Local_{\mathfrak{p}}(\tilde{\phi}\psi_{K'},\Sigma',\delta')}$
and where the quantity $\frac{\int_{G_{K'}}\tilde{\phi}\,\,\,\,\,
d\mu_{\psi_{K'}}}{\Omega_p(E)^{pg}}$ lies. We start with the local
factors. From Lemma \ref{epsilonfactors} we have that
\[
Local(\phi\chi\psi_K,\Sigma,\delta)_{\mathfrak{p}}=c^{(\chi)}_{\mathfrak{p}}(\delta)e_{\mathfrak{p}}(\phi^{-1}\chi^{-1},\psi,dx_1)\left(\frac{\psi^{-1}_K(\pi_{\mathfrak{p}})}{N(\mathfrak{p})}\right)^{n_{\p}(\phi\chi)+n_{\p}(\psi)}
\]
and
\[
Local(\tilde{\phi}\psi_K,\Sigma',\delta')_{\mathfrak{p}}=c'_{\mathfrak{p}}(\delta')e_{\mathfrak{p}}(\tilde{\phi}^{-1},\psi',dx_1)\left(\frac{\psi^{-1}_{K'}(\pi_{\mathfrak{p}})}{N(\mathfrak{p})}\right)^{n_{\p}(\tilde{\phi})+n_{\p}(\psi')}
\]
where $c^{(\chi)}_{\mathfrak{p}}$ the local part of $\phi\chi\psi_K$
and $dx_1$ is the Haar measure that assigns measure 1 to the ring of
integers of $K_{\mathfrak{p}}$ (with similar notations for the
second expression). Now we note that (as easily seen from the
functional equation and the fact that
$Ind^{K'}_{K}\mathbf{1}=\oplus_{\chi}\chi$) we have that
\[
\prod_{\mathfrak{p} \in
\Sigma'}e_{\mathfrak{p}}(\tilde{\phi},\psi',dx_{\psi}')=
\prod_{\chi}\prod_{\mathfrak{p} \in
\Sigma}e_{\mathfrak{p}}(\phi\chi,\psi,dx_{\psi})
\]
where we follow Tate's notation as in \cite{Tate} for the Tamagawa
measures $dx_{\psi}$ and $dx_{\psi'}$. The relation between the
Tamagawa measure $dx_{\psi}$ and the normalized measure $dx_1$ of a
place $\p$ is given by $dx_{\psi}=N(\p)^{-n_{\p}(\psi)/2} dx_1$
(There is a typo in Tate's \cite{Tate} p.17, but see the same
article in page 18 or Lang's Algebraic Number Theory page 277). That
implies,
\[
\prod_{\mathfrak{p} \in
\Sigma'}e_{\mathfrak{p}}(\tilde{\phi},\psi',dx_{\psi}')=\prod_{\mathfrak{p}
\in
\Sigma'}e_{\mathfrak{p}}(\tilde{\phi},\psi',dx_{1})N(\mathfrak{p})^{-n_{\p}(\psi')/2}
\]
and
\[
\prod_{\chi}\prod_{\mathfrak{p} \in
\Sigma}e_{\mathfrak{p}}(\phi\chi,\psi,dx_{\psi})=\prod_{\chi}\prod_{\mathfrak{p}
\in
\Sigma}e_{\mathfrak{p}}(\phi\chi,\psi,dx_{1})N(\mathfrak{p})^{-n_{\p}(\psi)/2}=\prod_{\mathfrak{p}
\in
\Sigma}N(\mathfrak{p})^{-pn_{\p}(\psi)/2}\prod_{\chi}e_{\mathfrak{p}}(\phi\chi,\psi,dx_{1})
\]
So we conclude the equation
\[
\prod_{\mathfrak{p} \in
\Sigma'}e_{\mathfrak{p}}(\tilde{\phi},\psi',dx_{1})N(\mathfrak{p})^{-n_{\p}(\psi')/2}=\prod_{\mathfrak{p}
\in
\Sigma}N(\mathfrak{p})^{-pn_{\p}(\psi)/2}\prod_{\chi}e_{\mathfrak{p}}(\phi\chi,\psi,dx_{1})
\]
or equivalently
\[
\prod_{\chi}\prod_{\mathfrak{p} \in
\Sigma}e_{\mathfrak{p}}(\phi\chi,\psi,dx_{1})=\frac{\prod_{\mathfrak{p}
\in \Sigma'}N(\mathfrak{p})^{-n_{\p}(\psi')/2}}{\prod_{\mathfrak{p}
\in \Sigma}N(\mathfrak{p})^{-pn_{\p}(\psi)/2}}\prod_{\mathfrak{p}
\in \Sigma'}e_{\mathfrak{p}}(\tilde{\phi},\psi',dx_{1})
\]
As we assume that $\Sigma$ and $\Sigma'$ are ordinary and for
simplicity we take the extension to be ramified only at $p$ we have
that $\frac{\prod_{\mathfrak{p} \in
\Sigma'}N(\mathfrak{p})^{n_{\p}(\psi')/2}}{\prod_{\mathfrak{p} \in
\Sigma}N(\mathfrak{p})^{pn_{\p}(\psi)/2}}=\frac{\sqrt{|D_{F'}|}}{\sqrt{|D_F|^p}}$.
Putting everything together we see that the discrepancy factor in
the congruences
\[
Diff:=\frac{\sqrt{|D_{F'}|}}{\sqrt{|D_F|^p}}\times
\frac{\prod_{\chi}(\prod_{\mathfrak{p} \in
\Sigma_p}Local_{\mathfrak{p}}(\phi\chi\psi_K,\Sigma,\delta))}{\prod_{\mathfrak{p}\in
\Sigma'_p}Local_{\mathfrak{p}}(\tilde{\phi}\psi_{K'},\Sigma',\delta')}
\]
is equal to
\[
Diff= \frac{\prod_{\chi}\prod_{\mathfrak{p} \in
\Sigma_p}c^{(\chi)}_{\mathfrak{p}}(\delta)\left(\frac{\psi^{-1}_K(\pi_{\mathfrak{p}})}{N(\mathfrak{p})}\right)^{n_{\p}(\phi\chi)+n_{\p}(\psi)}}{\prod_{\p
\in
\Sigma'_p}c'_{\p}(\delta')\left(\frac{\psi^{-1}_{K'}(\pi_{\mathfrak{p}})}{N(\mathfrak{p})}\right)^{n_{\p}(\tilde{\phi})+n_{\p}(\psi')}}
\]
Now we claim that the factor
\[
\frac{\prod_{\chi}\prod_{\mathfrak{p} \in
\Sigma_p}\left(\frac{\psi^{-1}_K(\pi_{\mathfrak{p}})}{N(\mathfrak{p})}\right)^{n_{\p}(\phi\chi)+n_{\p}(\psi)}}{\prod_{\p
\in
\Sigma'_p}\left(\frac{\psi^{-1}_{K'}(\pi_{\mathfrak{p}})}{N(\mathfrak{p})}\right)^{n_{\p}(\tilde{\phi})+n_{\p}(\psi')}}=1.
\]
Indeed we have
\[
\prod_{\p \in
\Sigma'_p}\left(\frac{\psi^{-1}_{K'}(\pi_{\mathfrak{p}})}{N(\mathfrak{p})}\right)^{n_{\p}(\tilde{\phi})+n_{\p}(\psi')}=\prod_{\p
\in \Sigma'_p}\left(\frac{\psi^{-1}_{K}\circ
N_{K'/K}(\pi_{\mathfrak{p}})}{N_K\circ
N_{K'/K}(\mathfrak{p})}\right)^{n_{\p}(\tilde{\phi})+n_{\p}(\psi')}
\]
For those $\p' \in \Sigma'_p$ that are not ramified we have
$n_{\p'}(\psi')=n_{\p}(\psi)$ for $\p \in \Sigma_p$ the prime below
$\p'$. Similarly
$n_{\p'}(\tilde{\phi})=n_{\p}(\phi\chi)=n_{\p}(\phi)$ for all the
$\chi$ as these are ramified only at the primes that ramify in
$K'/K$. Then we have
\[
\prod_{\p \in \Sigma'_p,\,\, unram.}\left(\frac{\psi^{-1}_{K}\circ
N_{K'/K}(\pi_{\mathfrak{p}})}{N_K\circ
N_{K'/K}(\mathfrak{p})}\right)^{n_{\p}(\tilde{\phi})+n_{\p}(\psi')}=\prod_{\p
\in \Sigma_p,
\,\,unram.}\left(\frac{\psi^{-1}_{K}(\pi_{\mathfrak{p}})}{N(\mathfrak{p})}\right)^{p(n_{\p}(\phi)+n_{\p}(\psi))}=
\]
\[
=\prod_{\chi}\prod_{\p \in \Sigma_p,
\,\,unram.}\left(\frac{\psi^{-1}_{K}(\pi_{\mathfrak{p}})}{N(\mathfrak{p})}\right)^{n_{\p}(\chi\phi)+n_{\p}(\psi)}
\]
Now we consider the ramified primes. We have
\[
\prod_{\p \in \Sigma'_p,\,\, ram.}\left(\frac{\psi^{-1}_{K}\circ
N_{K'/K}(\pi_{\mathfrak{p}})}{N_K\circ
N_{K'/K}(\mathfrak{p})}\right)^{n_{\p}(\tilde{\phi})+n_{\p}(\psi')}=\prod_{\p
\in
\Sigma_p}\left(\frac{\psi^{-1}_{K}(\pi_{\mathfrak{p}})}{N(\mathfrak{p})}\right)^{n_{\p}(\tilde{\phi})+n_{\p}(\psi')}
\]
For every $\p' \in \Sigma'_p$ that is ramified (totally as we
consider a $p$-order extension) we have from the
conductor-discriminant formula that
\[
n_{\p'}(\psi')= \sum_{\chi} n_{\p}(\chi) + pn_{\p}(\psi)
\]
for the prime $\p \in \Sigma_p$ below $\p$. Moreover as the
conductor-function $n_{\p}(\cdot)$ is additive and inductive in
degree zero we have that
\[
n_{\p'}(\tilde{\phi})=n_{\p'}(Res(\phi))=n_{\p'}(Res(\phi))-n_{\p'}(\mathbf{1})=n_{\p'}(Res(\phi)\ominus
\mathbf{1})=n_{\p}(Ind(Res(\phi))\ominus Ind(\mathbf{1}))=\]
\[
=n_{\p}(IndRes(\phi))-n_{\p}(Ind(\mathbf{1}))=n_{\p}(\oplus_{\chi}\phi\chi)-n_{\p}(\oplus_{\chi}\chi)=\sum_{\chi}n_{\p}(\phi\chi)-\sum_{\chi}n_{\p}(\chi)
\]
Putting all together we conclude our claim. Hence we have that
\[
Diff=\frac{\prod_{\chi}\prod_{\mathfrak{p} \in
\Sigma_p}c^{(\chi)}_{\mathfrak{p}}(\delta)\left(\frac{\psi^{-1}_K(\pi_{\mathfrak{p}})}{N(\mathfrak{p})}\right)^{n_{\p}(\phi\chi)+n_{\p}(\psi)}}{\prod_{\p
\in
\Sigma'_p}c'_{\p}(\delta')\left(\frac{\psi^{-1}_{K'}(\pi_{\mathfrak{p}})}{N(\mathfrak{p})}\right)^{n_{\p}(\tilde{\phi})+n_{\p}(\psi')}}
=\frac{\prod_{\chi}\prod_{\mathfrak{p} \in
\Sigma_p}c^{(\chi)}_{\mathfrak{p}}(\delta)}{\prod_{\p \in
\Sigma'_p}c'_{\p}(\delta')}
\]
Now we observe that
\[
\prod_{\chi}\prod_{\mathfrak{p} \in
\Sigma_p}c^{(\chi)}_{\mathfrak{p}}(\delta)=
\prod_{\chi}\prod_{\mathfrak{p} \in
\Sigma_p}(\phi\chi\psi_{K})_{\mathfrak{p}}(\delta)=\prod_{\mathfrak{p}
\in
\Sigma_p}(\phi\psi_K)_{\p}(\delta^p)\prod_{\chi}\chi_{\p}(\delta)\]
\[
=\prod_{\mathfrak{p} \in
\Sigma_p}(\phi\psi_K)_{\p}(\delta)\prod_{\chi}\chi_{\p}(\delta)=\prod_{\mathfrak{p}
\in \Sigma_p}(\phi\psi_K)_{\p}(\delta^p)
\]
since $\prod_{\chi}\chi_{\p}(\delta)=1$ because we multiply over all
elements of the multiplicative group of characters of $Gal(K'/K)$
and we know that $\chi \neq \chi^{-1}$ for all $\chi \neq 1$ as
these are $p$-order characters. Also we have that
\[
\prod_{\p \in \Sigma'_p}c'_{\p}(\delta')=\prod_{\p \in
\Sigma'_p}(\phi \circ N_{K'/K})_{\p}(\psi_K \circ
N_{K'/K})_{\p}(\delta')=\prod_{\p \in
\Sigma_p}(\phi\psi_K)_{\p}(N_{K'/K}\delta')
\]
In particular we observe that in general we have that
\[
\prod_{\chi}\prod_{\mathfrak{p} \in
\Sigma_p}c^{(\chi)}_{\mathfrak{p}}(\delta) \neq \prod_{\p \in
\Sigma'_p}c'_{\p}(\delta').
\]
as $N_{K'/K}(\delta') \neq \delta^p$ when the extension $K'/K$ is
ramified at $p$. Actually the two expressions may not even have the
same valuation.

\end{document}